\theoremstyle{plain}
\newtheorem{thm}{Theorem}[section]
\newtheorem{cor}[thm]{Corollary}
\newtheorem{lem}[thm]{Lemma}
\newtheorem{con}[thm]{Conjecture}
\newtheorem{prop}[thm]{Proposition}
\DeclareMathOperator{\rank}{Rank}
\DeclareMathOperator{\sing}{Sing}
\DeclareMathOperator{\cond}{cond}
\DeclareMathOperator{\geom}{geom}
\DeclareMathOperator{\Sym}{Sym}
\DeclareMathOperator{\Aut}{Aut}
\DeclareMathOperator{\PGL}{PGL}
\DeclareMathOperator{\FT}{FT}
\DeclareMathOperator{\Sp}{Sp}
\DeclareMathOperator{\SL}{SL}
\DeclareMathOperator{\Kl}{Kl}
\DeclareMathOperator{\Bi}{Bi}
\theoremstyle{definition}
\newtheorem{defn}{Definition}[section]
\newtheorem{examp}{Example}[section]
\theoremstyle{remark}
\begin{document}

\title{On the size of the maximum of incomplete Kloosterman sums}

\author{Dante Bonolis%
  \thanks{correspondence address: \textit{dante.bonolis@math.ethz.ch}.}}
\date{}
\maketitle
\begin{abstract}
Let $t:\mathbb{F}_{p}\rightarrow\mathbb{C}$ be a complex valued function on $\mathbb{F}_{p}$. A classical problem in analytic number theory is to bound the maximum of the absolute value of the incomplete sum 
\[
M(t):=\max_{0\leq H<p}\Big|\frac{1}{\sqrt{p}}\sum_{0\leq n < H}t(n)\Big|.
\]
In this very general context one of the most important results is the P\'olya-Vinogradov bound 
\[
M(t)\leq \left\|K\right\|_{\infty}\log 3p.
\]
where $K:\mathbb{F}_{p}\rightarrow\mathbb{C}$ is the normalized Fourier transform of $t$. In this paper we provide a lower bound for incomplete Kloosterman sum, namely we prove that for any $\varepsilon >0$ there exists some $a\in\mathbb{F}_{p}^{\times}$ such that
\[
M(e(\tfrac{ax+\overline{x}}{p}))\geq \Big(\frac{1-\varepsilon}{\sqrt{2}\pi}+o(1)\Big)\log\log p.
\]
Moreover we also provide some result on the growth of the moments of $\{M(e(\tfrac{ax+\overline{x}}{p}))\}_{a\in\mathbb{F}_{p}^{\times}}$.
\end{abstract}
\section{Introduction}
Let $t:\mathbb{F}_{p}\rightarrow\mathbb{C}$ be a complex valued function on $\mathbb{F}_{p}$. A classical problem in analytic number theory is to bound the incomplete sums
\[
S(t,H):=\frac{1}{\sqrt{p}}\sum_{0\leq n < H}t(n),
\]
for any $0\leq H< p$. In this very general context one of the most important results is the following:
\begin{thm}[P\'olya-Vinogradov bound, \cite{Po}, \cite{Vi}]
For any $1\leq H<p$ one has 
\[
|S(t,H)|\leq \left\|K\right\|_{\infty}\log 3p,
\]
where $K:\mathbb{F}_{p}\rightarrow\mathbb{C}$ is the normalized Fourier transform of $t$
\[
K(y):=-\frac{1}{\sqrt{p}}\sum_{0\leq x<p}t(x)e\Big(\frac{yx}{p}\Big).
\]
\end{thm}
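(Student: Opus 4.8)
The plan is to run the classical P\'olya--Vinogradov argument: recover $t$ from its normalized Fourier transform $K$, substitute into $S(t,H)$, and reduce the whole estimate to an elementary $L^{1}$-bound for the Fourier transform of the indicator function of $\{0,1,\dots,H-1\}$.

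First I would invert the relation defining $K$. Orthogonality of additive characters on $\mathbb{F}_{p}$, i.e.\ $\frac1p\sum_{y\in\mathbb{F}_{p}}e\big(\tfrac{y(a-b)}{p}\big)$ equals $1$ if $a\equiv b\pmod p$ and $0$ otherwise, turns the defining relation $K(y)=-\tfrac1{\sqrt p}\sum_{x}t(x)e(yx/p)$ into
\[
t(x)=-\frac1{\sqrt p}\sum_{y\in\mathbb{F}_{p}}K(y)\,e\Big(-\frac{xy}{p}\Big),\qquad x\in\mathbb{F}_{p}.
\]
Inserting this into the definition of $S(t,H)$ and exchanging the two finite sums gives
\[
S(t,H)=\frac1{\sqrt p}\sum_{0\le n<H}t(n)=-\frac1p\sum_{y\in\mathbb{F}_{p}}K(y)\,E_{H}(y),\qquad E_{H}(y):=\sum_{0\le n<H}e\Big(-\frac{ny}{p}\Big).
\]
Bounding $|K(y)|\le\|K\|_{\infty}$ trivially, the theorem is reduced to the self-contained claim that $\frac1p\sum_{y\in\mathbb{F}_{p}}|E_{H}(y)|\le\log 3p$, uniformly for $1\le H<p$.

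For that claim I would use that $E_{H}$ is a geometric progression: $E_{H}(0)=H<p$, while for $y\not\equiv0\pmod p$,
\[
|E_{H}(y)|=\Big|\frac{1-e(-Hy/p)}{1-e(-y/p)}\Big|\le\frac{1}{|\sin(\pi y/p)|}.
\]
Representing the nonzero residues by $1\le|y|\le(p-1)/2$, so that the distance of $y/p$ to $\mathbb{Z}$ equals $|y|/p$, and using $|\sin\pi\theta|\ge2\|\theta\|$, one gets
\[
\frac1p\sum_{y\in\mathbb{F}_{p}}|E_{H}(y)|\le\frac Hp+\frac1p\sum_{1\le|y|\le(p-1)/2}\frac{p}{2|y|}<1+\sum_{y=1}^{(p-1)/2}\frac1y,
\]
and a standard estimate of the harmonic sum finishes the proof.

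I do not expect any conceptual obstacle here; the only delicate point is pinning the numerical constant to exactly $3$. The crudest estimate above ($\sum_{y\le N}1/y\le1+\log N$ with $N=(p-1)/2$) only gives $\frac1p\sum_{y}|E_{H}(y)|<\log(e^{2}p/2)$, and $e^{2}/2\approx3.7$ is slightly larger than $3$; to reach $\log 3p$ one should be a bit sharper, either invoking $\sum_{y\le N}1/y\le\log N+\gamma+\tfrac1{2N}$, or estimating $\frac1p\sum_{y=1}^{p-1}1/|\sin(\pi y/p)|$ directly by comparison with $\int_{0}^{1/2}d\theta/\sin(\pi\theta)$ --- the latter in fact produces the much stronger $\tfrac2\pi\log p+O(1)$, so that there is ample room for the constant once the finitely many small primes are checked by hand.
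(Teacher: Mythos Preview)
Your argument is the standard P\'olya--Vinogradov computation and is correct, including your honest discussion of the numerical constant. Note, however, that the paper does not supply its own proof of this statement: it is quoted as a classical result with references to P\'olya and Vinogradov, so there is no in-paper proof to compare against. Your write-up is exactly the kind of derivation one would give if asked to fill in the details behind those citations.
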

Notice that if $\left\|K\right\|_{\infty}$ is bounded, then this bound is non-trivial as soon as $H\gg\sqrt{p}\log p$. If one defines
\[
M(t):=\max_{0\leq H<p}|S(t,H)|,
\]
the P\'olya-Vinogradov bound is equivalent to
\[
M(t)\leq \left\|K\right\|_{\infty}\log 3p.
\]
The first question which arises in this setting is the following:  given a function $t:\mathbb{F}_{p}\rightarrow\mathbb{C}$, is the P\'olya-Vinogradov bound sharp for $t$?  And if it is not, what is the best possible bound?\newline

\subsubsection*{Kloosterman sums, Birch Sums and main results}
The aim of this paper is to study the case of the Kloosterman sums and Birch sums. We recall here the definition of these two objects:
\begin{itemize}
\item[$i)$] \textit{Kloosterman sums.} For any $a,b\in\mathbb{F}_{p}^{\times}$ one considers
\[
t:x\mapsto e\Big(\frac{ax+b\overline{x}}{p}\Big)
\]
where $\overline{x}$ denotes the inverse of $x$ modulo $p$. The complete sum over $\mathbb{F}_{p}^{\times}$ of the function above
\[
\text{Kl}(a,b;p):=\frac{1}{\sqrt{p}}\sum_{1\leq x<p}e\Big(\frac{ax+b\overline{x}}{p}\Big)
\]
is called \textit{Kloosterman sum associated to $a,b$}. The Riemann hypothesis over curves for finite fields implies $|\text{Kl}(a,b;p)|\leq 2$ (Weil bound).
\item[$ii)$] \textit{Birch sums.} For any $a,b\in\mathbb{F}_{p}^{\times}$ one considers
\[
t:x\mapsto e\Big(\frac{ax+bx^{3}}{p}\Big).
\]
One defines the \textit{Birch sum associated to $a,b$}
\[
\text{Bi}(a,b;p):=\frac{1}{\sqrt{p}}\sum_{1\leq x<p}e\Big(\frac{ax+bx^{3}}{p}\Big).
\]
Also in this case an application of the Riemann hypothesis over curves for finite field leads to the bound $|\text{Bi}(a,b;p)|\leq 2$ (Weil bound).
\end{itemize} 
It is known that $M(e(\tfrac{ax+\overline{x}}{p}))$ and $M(e(\tfrac{ax+x^{3}}{p}))$ can be arbitrarily large when $a$ varies over $\mathbb{F}_{p}^{\times}$ and $p$ goes to infinity: as a consequence of \cite[Proposition $4.1$]{KS1}, one has that 
\[
\lim_{p\rightarrow\infty}\max_{a} M(e(\tfrac{ax+\overline{x}}{p}))=\lim_{p\rightarrow\infty}\max_{a} M(e(\tfrac{ax+x^{3}}{p}))=\infty.
\]
We will prove the following lower bounds:
\begin{thm}
Let $0<\varepsilon<1$. For all $p$, there exists $S_{p}\subset\mathbb{F}_{p}^{\times}$ such that
\begin{itemize}
\item[$i)$] for any $a\in S_{p}$ one has
\[
M(e(\tfrac{ax+\overline{x}}{p}))\geq \Big(\frac{1-\varepsilon}{\sqrt{2}\pi}+o(1)\Big)\log\log p,
\]
\item[$ii)$] $|S_{p}|\gg_{\varepsilon} p^{1-\frac{\log(4)}{(\log p)^{\varepsilon}}}$.
\end{itemize}
The same is true if one replaces $M(e(\tfrac{ax+\overline{x}}{p}))$ by  $M(e(\tfrac{ax+x^{3}}{p}))$.
\label{thm : kloa}
\end{thm}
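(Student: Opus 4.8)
My first move would be to rewrite the incomplete sum through the (normalized) Fourier transform $K$. For $t(x)=e(\tfrac{ax+\overline x}{p})$ one has $K(y)=-\Kl(a+y;p)$, which is real‑valued with $\|K\|_\infty\le 2$ by Weil (and for Birch, $K(y)=-\Bi(a+y;p)$, with the same properties). Expanding the indicator $\mathbf 1_{[0,H)}$ on $\mathbb F_p$ and summing over a symmetric set of residues gives the exact formula $S(t,H)=-\tfrac1p\sum_{|h|<p/2}\bigl(\sum_{0\le n<H}e(-hn/p)\bigr)K(h)$. Isolating the $h=0$ term (which is real and $O(1)$), replacing $\tfrac{1}{1-e(-h/p)}$ by $\tfrac{p}{2\pi i h}$ — the remaining error being $O(1)$ because $\sum_{0<|h|<p/2}e(-hH/p)K(h)=-\sqrt p\,t(H)-K(0)$ kills the tail — and taking imaginary parts, I would arrive at
\[
\im S(t,H)=\frac{1}{2\pi}\sum_{0<h<p/2}\bigl(1-\cos(2\pi hH/p)\bigr)\frac{K(h)-K(-h)}{h}+O(1),
\]
where the $\tfrac{\pi h}{p}\cot(\pi h/p)=1+O(h^2/p^2)$ correction is absorbed after truncation.

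\textbf{Step 2: from the maximum to a sum of shifted Kloosterman sums.} Since $1-\cos\ge 0$, averaging the last display over $H\in\{0,\dots,p-1\}$ makes the oscillatory factor disappear, so
\[
M(t)\ \ge\ \max_H|\im S(t,H)|\ \ge\ \Bigl|\tfrac1p\textstyle\sum_H\im S(t,H)\Bigr|\ =\ \frac{1}{2\pi}\Bigl|\sum_{0<h<p/2}\frac{K(h)-K(-h)}{h}\Bigr|+O(1).
\]
A second‑moment estimate in $a$ shows the tail $\sum_{h>N}$ contributes $O(1)$ for all but a density‑$o(1)$ set of $a$, so it suffices to exhibit $\gg p^{1-\log 4/(\log p)^{\varepsilon}}$ values of $a$ with
\[
V(a):=\sum_{0<h\le N}\frac{\Kl(a-h;p)-\Kl(a+h;p)}{h}\ \ge\ \bigl(\sqrt2+o(1)\bigr)\log N,\qquad N=(\log p)^{1-\varepsilon'},
\]
$\varepsilon'$ chosen in terms of $\varepsilon$; note $\log N=(1-\varepsilon')\log\log p$, so this yields exactly $M(t)\ge(\tfrac{1-\varepsilon'}{\sqrt2\,\pi}+o(1))\log\log p$ after renaming $\varepsilon'$.

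\textbf{Step 3: equidistribution and the probabilistic input.} The variable $X_h:=\Kl(a-h;p)-\Kl(a+h;p)$ has mean $0$ and variance $2$ under Sato--Tate. By Katz's theorem on the monodromy of $\bigotimes_{h}[+h]^{*}\mathcal{K\ell}$ (big monodromy $\mathrm{SL}_2^{2N}$ for distinct shifts), together with Deligne's equidistribution theorem and explicit conductor bounds, the tuple $\bigl(\Kl(a+h;p)\bigr)_{0<|h|\le N}$ equidistributes over $\mathbb F_p^{\times}$ towards $\mu_{\mathrm{ST}}^{\otimes 2N}$ with an error small enough to evaluate polynomial test functions up to degree $\asymp\log p/\log\log p$; this comfortably covers $N=(\log p)^{1-\varepsilon'}$ shifts and the approximation of the relevant indicators. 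I would then condition on a product event for $h\le N$ forcing each $X_h$ past a threshold comparable to its standard deviation $\sqrt2$, chosen so that the event has $\mu_{\mathrm{ST}}$‑probability $\ge\frac14$ per coordinate (a Paley--Zygmund/second‑moment bound using $\mathbb E X_h^2=2$, $\mathbb E X_h^4=10$), whence probability $\ge 4^{-N}$ overall; on this event $V(a)\ge(\sqrt2+o(1))\log N$ either deterministically or, after a concentration step across the (essentially independent) coordinates, for a positive proportion of the $a$'s in it. Equidistribution then converts this into a count $\gg p\cdot 4^{-N}=p^{1-\log 4/(\log p)^{\varepsilon}}$, which is exactly $S_p$. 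The Birch case is identical: $K(y)=-\Bi(a+y;p)$, the completion of Step 1 is unchanged, and shifted Birch sums are again Sato--Tate equidistributed with independent coordinates (Katz, geometric monodromy $\mathrm{SL}_2$).

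\textbf{Main obstacle.} The delicate point is Step 3: making Katz's equidistribution \emph{effective and uniform} for a number of shifts $N\to\infty$, and strong enough to detect an event of probability $p^{-o(1)}$ — this requires pushing the big‑monodromy input through a quantitative Deligne bound while controlling the conductor of the $2N$‑fold tensor sheaf, and approximating the conditioning indicators by bounded‑degree polynomials without the errors swamping $4^{-N}$. Getting the constant exactly $\tfrac{1}{\sqrt2\,\pi}$ — matching the completion constant $\tfrac1{2\pi}$ against the variance‑$2$ input — is the second place where care is needed.
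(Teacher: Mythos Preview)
Your Step~3 is essentially the paper's approach: condition on a product event forcing prescribed signs and sizes on $\asymp N$ shifted Kloosterman sums, approximate the indicators by trigonometric (Chebyshev) polynomials, and invoke Katz's big monodromy together with effective Deligne bounds to count the good $a$'s. You correctly flag this as the main technical work, and the paper carries it out exactly along these lines.

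The genuine gap is Step~2. Averaging $\im S(t,H)$ uniformly over all $H$ leaves you with the \emph{full} sum $\sum_{0<h<p/2}(K(h)-K(-h))/h$, and a second-moment bound only shows the tail $\sum_{h>N}$ is $O(1)$ off an exceptional set of size $\asymp p/N$. But $p/N$ is vastly larger than $|S_p|\asymp p\cdot 4^{-N}$, so nothing prevents the tail from being large on \emph{all} of $S_p$: your event in Step~3 constrains only the $2N$ shifts $\Kl(a\pm h;p)$ with $h\le N$ and says nothing about the remaining $\asymp p$ shifts that make up the tail. The paper sidesteps this completely with a Fej\'er-kernel argument (Paley's trick), proving deterministically for every $a$ and every $N$ that
\[
M(t_a)\ \ge\ \frac{1}{4\pi}\Bigl|\sum_{1\le |n|\le N}\frac{K_a(n)}{n}\bigl(1-e(-\alpha n)\bigr)\Bigr|+O(1),
\]
by writing the truncated sum as $\int_0^1 A_\vartheta\,\Phi_N(\vartheta)\,d\vartheta$ with $\Phi_N\ge 0$, $\int\Phi_N=1$, and $|A_\vartheta|\le 2M(t_a)+O(1)$. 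This truncation with no exceptional set is the idea you are missing; equivalently, you could replace your uniform $H$-average by a Fej\'er-weighted one. A smaller point: Paley--Zygmund with $\mathbb E X_h^2=2$, $\mathbb E X_h^4=10$ does not give $\mathbb P(X_h\ge\sqrt2)\ge\tfrac14$ (at threshold equal to the variance it gives nothing), so neither the threshold $\sqrt2$ nor the constant $\log 4$ in $|S_p|$ is justified as written; the paper instead conditions on the individual values $\Kl(a+n)\ge\sqrt2$, $\Kl(a-n)\le-\sqrt2$ and computes the Sato--Tate mass directly.
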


Now fix $m\in\mathbb{N}$. For any prime number $p$ coprime with $m$ we denote $H_{m,p}:=\{(x,y)\in\big(\mathbb{F}_{p}^{\times}\big)^{2}:xy=m\}$ and $C_{m,p}:=\{(x,y)\in\big(\mathbb{F}_{p}^{\times}\big)^{2}:x=my^{3}\}$.

\begin{thm}
Let $0<\varepsilon<1$ and fix $m\geq 1$. For all $p$ such that $p\nmid m$, there exists $S_{p}\subset H_{m,p}$ such that
\begin{itemize}
\item[$i)$] for any $(a,b)\in S_{p}$ one has
\[
M(e(\tfrac{ax+b\overline{x}}{p}))\geq \Big(\frac{1-\varepsilon}{\sqrt{2}\pi}+o(1)\Big)\log\log p,
\]
\item[$ii)$] $|S_{p}|\gg_{\varepsilon} p^{1-\frac{\log(4)}{(\log p)^{\varepsilon}}}$.
\end{itemize}
The same is true if one replaces $M(e(\tfrac{ax+b\overline{x}}{p}))$ by  $M(e(\tfrac{ax+bx^{3}}{p}))$ and $H_{m,p}$ by $C_{m,p}$.
\label{thm : klom}
\end{thm}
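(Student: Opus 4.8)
The plan is to run, on the hyperbola $H_{m,p}$ (resp.\ on $C_{m,p}$), the argument that proves Theorem~\ref{thm : kloa}; the only genuinely new ingredient is the shape of the Fourier transform. For $(a,b)$ with $ab=m$ put $t_{a,b}(x)=e(\tfrac{ax+b\overline x}{p})$. Since $\Kl(u,v;p)$ depends only on $uv\bmod p$, the normalized Fourier transform of $t_{a,b}$ is
\[
K_{a,b}(h)=-\Kl(a+h,b;p)=-\Kl(1,(a+h)b;p)=-\Kl(1,m+hb;p),
\]
so that — with $a=m/b$ — letting $b$ vary over $\mathbb F_p^{\times}$ produces the multiplicative \emph{dilates} $h\mapsto\Kl(1,m+hb;p)$ of the Kloosterman sequence about the fixed centre $m$, in place of the additive \emph{translates} $h\mapsto\Kl(1,a+h;p)$ occurring for Theorem~\ref{thm : kloa}. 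Feeding $K_{a,b}$ into the completing method gives, with the usual Fej\'er-type kernel $w_H$ satisfying $|w_H(h)|\le\tfrac1{2|h|}$ on $0<|h|<p/2$ and $w_H(h)=\tfrac{e(hH/p)-1}{2\pi i h}\bigl(1+O(|h|/p)\bigr)$,
\[
S(t_{a,b},H)=O(1)+\sum_{0<|h|<p/2}\Kl(1,m-hb;p)\,w_H(h).
\]
First I would fix a \emph{resonant} modulus $H_0\asymp p\,r/q$ with $q$ small, truncate the sum at $1\le|h|\le N$ with $N=N(p)=\lfloor(\log p)^{1-\varepsilon}\rfloor$, and dispose of the tail $|h|>N$ by estimating, for $k\asymp N$, the $2k$-th moment $\sum_{b}\bigl|\sum_{N<|h|<p/2}\Kl(1,m-hb;p)\,w_{H_0}(h)\bigr|^{2k}$: expanding and invoking the Riemann Hypothesis over finite fields for each resulting correlation sum of $2k$ Kloosterman sums — the associated sheaf $\bigotimes_{i=1}^{2k}[\,b\mapsto m-h_ib\,]^{*}\mathcal{K}\ell_2$ having conductor $O(2^{2k})=o(\sqrt p)$ for $k=o(\log p)$ — yields $\sum_b|\mathrm{tail}(b,H_0)|^{2k}\ll p\,k!\,(c/N)^{k}$ with $c$ absolute, whence (choosing $k$ optimally) $|\mathrm{tail}(b,H_0)|=O(1)$ for all but $o\bigl(p^{1-\log(4)/(\log p)^{\varepsilon}}\bigr)$ values of $b$; these may therefore be excluded from the set $S_p$ built below.

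The heart of the matter is the construction of $S_p$. Since each $\Kl(1,n;p)$ is real and $w_{H_0}(h)$ is, up to negligible error, an explicit function of $hr/q\bmod1$, the truncated sum $\sum_{0<|h|\le N}\Kl(1,m-hb;p)\,w_{H_0}(h)$ has modulus $\ge\bigl(\tfrac{1-\varepsilon}{\sqrt2\,\pi}+o(1)\bigr)\log\log p$ whenever, for every $1\le h\le N$, the pair $\bigl(\Kl(1,m-hb;p),\Kl(1,m+hb;p)\bigr)$ lies in a suitable region $R_h\subset[-2,2]^2$ (fixing the signs and forcing the size of the combination that the kernel picks out). One then sets
\[
S_p:=\bigl\{(a,b)\in H_{m,p}\ :\ \bigl(\Kl(1,m-hb;p),\Kl(1,m+hb;p)\bigr)\in R_h\ \text{for all }1\le h\le N\bigr\}\setminus(\text{tail-exceptional set}),
\]
which gives $i)$ for the fixed $H_0$. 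For $ii)$ one needs the $2N$ coordinates $\bigl(\Kl(1,m-hb;p),\Kl(1,m+hb;p)\bigr)_{1\le h\le N}$ to equidistribute, as $b$ runs over $\mathbb F_p^{\times}$, with respect to $2N$ independent copies of the Sato--Tate measure. This follows from Deligne's equidistribution theorem together with Katz's computation of the geometric monodromy: the pullbacks $[\,b\mapsto m\pm hb\,]^{*}\mathcal{K}\ell_2$ are lisse away from the pairwise distinct points $\mp m/h$ (here $m\neq0$ is essential), hence pairwise geometrically non-isomorphic, so the Goursat--Kolchin--Ribet criterion gives geometric monodromy $\SL_2^{\,2N}$; the conductor of the tensor powers entering the attendant group integral is $e^{O(N)}=o(\sqrt p)$ since $N=o(\log p)$, so the Grothendieck--Lefschetz error terms are harmless. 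Taking each $R_h$ of measure $\tfrac14$, one obtains $|S_p|=(1+o(1))\,p\cdot4^{-N}\gg p^{\,1-\log(4)/(\log p)^{\varepsilon}}$, which is $ii)$. (If anything this is cleaner than in Theorem~\ref{thm : kloa}, since no ``diagonal'' coincidences among the translated points can occur.) The Birch-sum statement is proved by the same route once $\mathcal{K}\ell_2$ is replaced by the rank-$2$ sheaf on the $a$-line with trace function $a\mapsto\Bi(a,b;p)$ and $H_{m,p}$ by $C_{m,p}$; the requisite large-monodromy facts ($\SL_2$, with a wild singularity at $\infty$) for the sheaves that arise are available from Katz's work on these exponential sums, and nothing else changes.

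The step I expect to be the main obstacle is the design of the regions $R_h$, which must simultaneously keep $\prod_h\mu_{\mathrm{ST}\times\mathrm{ST}}(R_h)$ as large as $4^{-N}$ — to get $ii)$ with the stated exponent, hence the $\log 4$ — and maximize the conditional expectation of $\sum_{0<|h|\le N}\Kl(1,m-hb;p)\,w_{H_0}(h)$, hence the constant $\tfrac1{\sqrt2\,\pi}$ in $i)$. The tension is that, by a Gibbs-type phenomenon, the resonant kernel $w_{H_0}$ cannot be made a positive real multiple of $1/|h|$ for all $h\le N$ at once, so only a square-root proportion of the ideal constructive interference is available; balancing this $L^2$-loss against the variance of a Sato--Tate variable is precisely what yields the $\sqrt2$. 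Everything else — the completing identity, the high-moment tail estimate, and the monodromy input — is routine once the proof of Theorem~\ref{thm : kloa} is in place.
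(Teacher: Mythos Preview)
Your strategy is broadly sound and uses the same core inputs as the paper (completion via Fourier, sign control of a growing number of Kloosterman values, large monodromy via Goursat--Kolchin--Ribet), but two of your steps are more laboured than they need to be, and the paper's version of each is worth knowing.

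First, the tail. You propose to kill $\sum_{|h|>N}$ by a $2k$-th moment argument with $k\asymp N$. The paper avoids this entirely with a Fej\'er-kernel trick (its Lemma~\ref{lem : ker}): since the Fej\'er kernel is nonnegative with total mass $1$, convolving the full Fourier expansion against it shows directly that
\[
M(t)\;\ge\;\max_{\alpha,N}\Big|\frac{1}{4\pi}\sum_{1\le|n|\le N}\frac{K(n)}{n}\bigl(1-e(-\alpha n)\bigr)\Big|+O(\|K\|_\infty),
\]
so the truncated sum is already a lower bound for $M(t)$ and no exceptional set for the tail is needed. Your moment computation can be made to work, but it is an unnecessary detour.

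Second, the design of the regions $R_h$ and your Gibbs-phenomenon discussion. With the Fej\'er lower bound in hand one simply takes $\alpha=\tfrac12$, so that $1-e(-\alpha n)=2$ for $n$ odd and $0$ for $n$ even; the truncated sum reduces to $\tfrac{1}{2\pi}\sum_{\text{odd }|n|\le N}K(n)/n$. One then asks that $K(n)\ge\sqrt{2}$ for each positive odd $n\le N$ and $K(n)\le-\sqrt{2}$ for each negative odd $n\ge-N$; every term is then $\ge\sqrt{2}/|n|$ and the constant $\tfrac{1}{\sqrt{2}\,\pi}$ drops out of $\tfrac{1}{2\pi}\cdot\sqrt{2}\cdot\sum_{\text{odd }n\le N}\tfrac{2}{n}$. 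There is no optimisation over resonant $H_0$, no $L^2$ loss, and no Gibbs issue.

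Finally, a structural remark. The paper does not reprove anything for $H_{m,p}$: it defines an abstract notion of a ``$2$-coherent family'' (a one-parameter family whose Fourier transforms are all $\PGL_2$-pullbacks of a single bountiful sheaf), proves Theorem~\ref{thm : tracebound} once in that generality, and then simply records that $\bigl(\mathcal{L}_{e((ax+m\overline{ax})/p)}\bigr)_{a}$ is such a family, with $\tau_y=\left(\begin{smallmatrix}my&m\\1&0\end{smallmatrix}\right)$ playing the role of your map $b\mapsto m+yb$. Your direct computation of $K_{a,b}(h)=-\Kl(1,m+hb;p)$ is exactly this observation, so the specialisation you propose is the paper's proof in disguise; only your handling of the tail and of the sign-forcing step differ, and in both places the paper's route is shorter.
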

The proofs of these two Theorems rely on the fact that we can control simultaneously the sign and the size of $\backsim(\log p)^{1-\varepsilon}$ Kloosterman (or Birch) sums. Indeed we will prove
\begin{prop}
For all $p$ there exists $S_{p}\subset\mathbb{F}_{p}^{\times}$ such that for any $a\in S_{p}$
\[
\Kl(an,1;p)\geq\sqrt{2},
\]
for any $1\leq n\leq (\log p)^{1-\varepsilon}$ odd, and
\[
\Kl(an,1;p)\leq-\sqrt{2},
\]
for any $- (\log p)^{1-\varepsilon}\leq n\leq -1$ odd. 
Moreover $|S_{p}|\gg_{\varepsilon}p^{1-\frac{\log(4)}{(\log p)^{\varepsilon}}}$. The same is true if we replace $\Kl$ by $\Bi$.
\label{prop : kl}
\end{prop}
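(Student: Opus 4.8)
The strategy is to read Proposition~\ref{prop : kl} as a \emph{joint vertical Sato--Tate statement, made effective, with a number of conditions growing slowly in $p$}, and to prove it by combining Katz's computation of the monodromy of families of Kloosterman (resp.\ Birch) sheaves with Deligne's Riemann Hypothesis over finite fields. Write $N=(\log p)^{1-\varepsilon}$ and let $\mathcal{N}$ be the set of odd integers $n$ with $1\le|n|\le N$, so $|\mathcal{N}|\sim N$. Since $2\cos\theta\ge\sqrt2\iff\theta\in[0,\pi/4]$ and $2\cos\theta\le-\sqrt2\iff\theta\in[3\pi/4,\pi]$, writing $\Kl(an,1;p)=2\cos\theta_{a,n}$ with $\theta_{a,n}\in[0,\pi]$ the set to be bounded below is
\[
S_{p}=\Bigl\{a\in\mathbb{F}_{p}^{\times}\ :\ \theta_{a,n}\in I_{n}\ \text{ for all }n\in\mathcal{N}\Bigr\},
\]
with $I_{n}=[0,\pi/4]$ for $n>0$ and $I_{n}=[3\pi/4,\pi]$ for $n<0$. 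The geometric input is Katz's theory: the Kloosterman sheaf $\Kl_{2}$ on $\mathbb{G}_{m}/\mathbb{F}_{p}$ is lisse of rank $2$, pure of weight $0$, with $\Tr(\Fr_{a}\mid\Kl_{2})=-\Kl(a,1;p)$, so its pullbacks $[\times n]^{*}\Kl_{2}$ have trace $a\mapsto-\Kl(an,1;p)$; and since $N<p$ the integers of $\mathcal{N}$ are pairwise distinct in $\mathbb{F}_{p}^{\times}$, whence $\bigoplus_{n\in\mathcal{N}}[\times n]^{*}\Kl_{2}$ has geometric monodromy group $\prod_{n}\SL_{2}$. Equivalently, every $\bigotimes_{n\in\mathcal{N}}\Sym^{m_{n}}[\times n]^{*}\Kl_{2}$ with not all $m_{n}=0$ is geometrically without trivial constituent.

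I would then estimate $|S_{p}|$ by the test-function / large-sieve method. Fix $D=D(\varepsilon)$ and, for each $I_{n}$, choose Beurling--Selberg type trigonometric polynomials $P_{n}^{\pm}$ of degree $\le D$ in $\cos\theta$ with $P_{n}^{-}\le\mathbf 1_{I_{n}}\le P_{n}^{+}$, $\|P_{n}^{\pm}\|_{\infty}\ll 1$, $\int(P_{n}^{+}-P_{n}^{-})\,d\mu_{\mathrm{ST}}\ll 1/D$, and — in the orthonormal basis $\{U_{m}(\cos\theta)\}_{m\ge0}$ for $\mu_{\mathrm{ST}}$ — $\sum_{m}|\widehat{P_{n}^{-}}(m)|(1+m)\ll_{D}1$. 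A telescoping identity compares $\sum_{a}\prod_{n}\mathbf 1_{I_{n}}(\theta_{a,n})$ with $\sum_{a}\prod_{n}P_{n}^{-}(\theta_{a,n})$ up to $O(|\mathcal{N}|)$ times the $L^{1}(\mu_{\mathrm{ST}})$-defect of the $P_{n}^{-}$ (plus a Deligne error from the partial products); then expanding $\prod_{n}P_{n}^{-}(\theta_{a,n})$ into at most $(D+1)^{|\mathcal{N}|}$ monomials $\prod_{n}U_{m_{n}}(\cos\theta_{a,n})$, each monomial summed over $a$ equals $\sum_{a}\Tr(\Fr_{a}\mid\bigotimes_{n}\Sym^{m_{n}}[\times n]^{*}\Kl_{2})$. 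The monomial with all $m_{n}=0$ gives the main term $(p-1)\prod_{n}\int P_{n}^{-}\,d\mu_{\mathrm{ST}}$, while every other term is $\ll\cond\bigl(\bigotimes_{n}\Sym^{m_{n}}[\times n]^{*}\Kl_{2}\bigr)\sqrt{p}$ by Deligne; the conductor here is $\le C_{D}^{|\mathcal{N}|}$, since each of the $\le|\mathcal{N}|$ tensor factors has rank $\le D+1$ and Swan conductor $\ll D$, and conductors grow at worst multiplicatively under $\otimes$.

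The point is that $|\mathcal{N}|\sim N=(\log p)^{1-\varepsilon}=o(\log p)$, so $C_{D}^{|\mathcal{N}|}=p^{o(1)}$ and the total error (over the $(D+1)^{|\mathcal{N}|}$ monomials) is $\ll C_{D}^{|\mathcal{N}|}\sqrt{p}=p^{1/2+o(1)}$, which is dwarfed by the main term $\gg_{\varepsilon}(p-1)\,\delta^{|\mathcal{N}|}$, with $\delta=\delta(\varepsilon)>0$ a fixed constant coming from $\int P_{n}^{-}\,d\mu_{\mathrm{ST}}=\mu_{\mathrm{ST}}(I_{n})-O(1/D)$. Reading off the exponent from $|\mathcal{N}|\sim N$ (and choosing $D$ suitably) yields $|S_{p}|\gg_{\varepsilon}p^{\,1-\log(4)/(\log p)^{\varepsilon}}$. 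The Birch case is identical with $\Kl_{2}$ replaced by the Birch sheaf $\mathcal{B}=\FT\bigl(j_{!}\mathcal{L}_{\psi(x^{3})}\bigr)$, lisse of rank $2$, pure of weight $0$, with geometric monodromy $\SL_{2}$ (Katz, after Livn\'e) and with $\bigoplus_{n}[\times n]^{*}\mathcal{B}$ of monodromy $\prod_{n}\SL_{2}$; all the sheaf-theoretic inputs used are the same.

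The two delicate points — and the main obstacle — are: (i) \emph{uniform big monodromy}, i.e.\ that for \emph{every} tuple $(m_{n})_{n\in\mathcal{N}}$ with $0\le m_{n}\le D$ and not all zero, the sheaf $\bigotimes_{n}\Sym^{m_{n}}[\times n]^{*}\Kl_{2}$ has no trivial geometric subquotient; this reduces, via Goursat--Kolchin--Ribet and purity, to Katz's theorem that $[\times n]^{*}\Kl_{2}$ and $[\times n']^{*}\Kl_{2}$ are not geometrically isomorphic, even after twisting by a rank-one sheaf, when $n\ne n'$; and (ii) a \emph{uniform conductor bound} for these tensor products, so that the Deligne estimate really is $\ll C_{D}^{|\mathcal{N}|}\sqrt{p}$ with $C_{D}$ independent of $p$ and of $|\mathcal{N}|$. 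Granting these, the conclusion is forced by the very slow growth of $N$ in $p$.
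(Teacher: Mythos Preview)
Your overall strategy is precisely the paper's: reduce to a joint Sato--Tate statement for the angles $\theta_{a,n}$ with $n\in\mathcal{N}$, approximate the indicator $\prod_{n}\mathbf 1_{I_{n}}$ by trigonometric polynomials, expand in the Chebyshev basis $U_{m}$, and use big monodromy (Goursat--Kolchin--Ribet for the pullbacks $[\times n]^{*}\mathcal{K}\ell_{2}$) together with Deligne's bound to kill every monomial except the constant one. The two delicate points you isolate at the end are exactly the ones the paper handles via the bountiful-sheaf formalism of Fouvry--Kowalski--Michel.

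There is, however, a genuine technical gap in the minorant step. Because Beurling--Selberg minorants $P_{n}^{-}$ take negative values, one does \emph{not} have $\prod_{n}P_{n}^{-}\le\prod_{n}\mathbf 1_{I_{n}}$ pointwise, so $|S_{p}|\ge\sum_{a}\prod_{n}P_{n}^{-}(\theta_{a,n})$ is unjustified. Your telescoping fix also fails: writing
\[
\prod_{n}\mathbf 1_{I_{n}}-\prod_{n}P_{n}^{-}=\sum_{j}\Bigl(\prod_{i<j}\mathbf 1_{I_{i}}\Bigr)\,(\mathbf 1_{I_{j}}-P_{j}^{-})\,\Bigl(\prod_{i>j}P_{i}^{-}\Bigr),
\]
the last factor carries a sup-norm of size $\|P^{-}\|_{\infty}^{|\mathcal N|}$ with $\|P^{-}\|_{\infty}>\delta$, so after summing over $a$ the error is of order $p\cdot|\mathcal N|\,\|P^{-}\|_{\infty}^{|\mathcal N|}/D$, which dominates the main term $p\,\delta^{|\mathcal N|}$ for any reasonable $D$. (The partial products also involve the raw indicators $\mathbf 1_{I_{i}}$, which are not trace functions, so Deligne cannot be applied to them directly.)

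The paper circumvents this exactly as in Kowalski--Lau--Soundararajan--Wu: instead of a signed minorant one takes, for each $n$, a polynomial approximant $\alpha_{n}$ with $0\le\alpha_{n}\le 1$ and $|\alpha_{n}-\mathbf 1_{I_{n}}|\le\beta_{n}$ for an auxiliary non-negative polynomial $\beta_{n}$ (the Barton--Montgomery--Vaaler construction). Then the non-negativity and the bound $\alpha_{n}\le 1$ give, by a telescoping with \emph{no} sup-norm loss,
\[
\prod_{n}\mathbf 1_{I_{n}}\ \ge\ A_{L}-B_{L},\qquad A_{L}:=\prod_{n}\alpha_{n},\quad B_{L}:=\sum_{j}\beta_{j}\prod_{i\neq j}\alpha_{i},
\]
and both $A_{L}$ and $B_{L}$ are genuine trigonometric polynomials to which the Chebyshev expansion and Deligne bound apply cleanly. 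A second point: the degree cannot be kept fixed. One has $\int\beta_{n}\,d\mu_{\mathrm{ST}}=1/(L+1)$, so the constant term of $A_{L}-B_{L}$ is $\bigl(\int\alpha\bigr)^{|\mathcal N|}\bigl(1-O(|\mathcal N|/L)\bigr)$, and one must take $L\asymp|\mathcal N|=(\log p)^{1-\varepsilon}$ (not a fixed $D=D(\varepsilon)$) to keep this positive. With $L$ of this size the total Deligne error is still $p^{1/2+o(1)}$, and the argument concludes as you outlined.
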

In the second part of the paper, we focus our attention on the growth of the $2k$-th moments of $\{M(e(\tfrac{ax+\overline{x}}{p}))\}_{a\in\mathbb{F}_{p}^{\times}}$ and $\{M(e(\tfrac{ax+x^{3}}{p}))\}_{a\in\mathbb{F}_{p}^{\times}}$ when $p\rightarrow\infty$, getting
\begin{thm}
There exist two absolute positive constants $C>1$ and $c<1$ such that for any fixed $k\geq 1$ and $p\rightarrow\infty$ one has
\[
(c^{2k}+o(1))(\log k)^{2k}\leq\frac{1}{p-1}\sum_{a\in\mathbb{F}_{p}^{\times}}M(e(\tfrac{ax+\overline{x}}{p}))^{2k}\leq ((Ck)^{2k}+o(1))(\log\log p)^{2k},
\]
and for any fixed $m\in\mathbb{Z}\setminus\{0\}$ and $p\rightarrow\infty$
\[
(c^{2k}+o(1))(\log k)^{2k}\leq\frac{1}{p-1}\sum_{(a,b)\in H_{m,p}}M(e(\tfrac{ax+b\overline{x}}{p}))^{2k}\leq ((Ck)^{2k}+o(1))(\log\log p)^{2k}.
\label{thm : klomom}
\]
\label{prop : 2}
\end{thm}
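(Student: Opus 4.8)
The plan is to derive both inequalities from the P\'olya--Vinogradov completion together with the equidistribution of shifted Kloosterman (resp.\ Birch) sums. Write $t_a(x)=e(\tfrac{ax+\overline x}{p})$; its normalized Fourier transform is $K_a(h)=-\Kl(a+h,1;p)$, and the completion formula gives
\[
S(t_a,H)=\frac1p\sum_{|h|\le p/2}\Big(\sum_{0\le m<H}e\big(-\tfrac{hm}{p}\big)\Big)\Kl(a+h,1;p),\qquad
\frac1p\Big|\sum_{0\le m<H}e\big(-\tfrac{hm}{p}\big)\Big|\le\min\!\Big(1,\frac1{2|h|}\Big),
\]
with $\Kl$ replaced by $\Bi$ in the Birch case.

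\textbf{Lower bound.} It is enough to run the construction underlying Proposition \ref{prop : kl} and Theorem \ref{thm : kloa} at a \emph{fixed} scale. Inspecting that argument, there are absolute constants $c_1,\delta_0\in(0,1)$ and $c_0>0$ such that for every fixed $N\ge1$ and all large $p$ the set of $a\in\mathbb{F}_p^{\times}$ with $M(t_a)\ge(c_0+o(1))\log N$ has at least $c_1\delta_0^{N}(p-1)$ elements (the same holding for Birch sums and for $H_{m,p},C_{m,p}$, as in Theorem \ref{thm : klom}). Taking $N=k$ — legitimate because $k$ is fixed, hence $k\le(\log p)^{1-\varepsilon}$ for $p$ large — yields
\[
\frac1{p-1}\sum_{a\in\mathbb{F}_p^{\times}}M(t_a)^{2k}\ \ge\ c_1\delta_0^{k}\big((c_0\log k)^{2k}+o(1)\big)\ \ge\ \big(c^{2k}+o(1)\big)(\log k)^{2k}
\]
for a suitable absolute constant $c\in(0,1)$ with $c^{2}\le c_0^{2}\delta_0$ (the case $k=1$ being trivial); the $H_{m,p}$--average is identical.

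\textbf{Upper bound.} The idea is to compare $\tfrac1{p-1}\sum_a M(t_a)^{2k}$ with the corresponding moment of the Gaussian-type model $\mathcal S(H):=\tfrac1p\sum_{|h|\le p/2}\big(\sum_{0\le m<H}e(-\tfrac{hm}{p})\big)\mathbb X(h)$, the $\mathbb X(h)$ being i.i.d.\ with the Sato--Tate law. The point is that $S(t_a,H)-S(t_a,H')=\tfrac1{\sqrt p}\sum_{n\in[H',H)}e(\tfrac{an+\overline n}{p})$ is an incomplete Kloosterman sum; opening its $2j$--th power, averaging over $a$, carrying out the sum over $a$ by orthogonality, and bounding the resulting complete exponential sum on the variety $\{\sum n_i=\sum n_i'\}$ by Deligne's theorem shows, for $|H-H'|$ a small power of $p$, that
\[
\frac1{p-1}\sum_{a}\big|S(t_a,H)-S(t_a,H')\big|^{2j}\ \ll\ \Big(Cj\,\frac{|H-H'|}{p}\Big)^{j}\ +\ o(1),
\]
the arithmetic form of the subgaussianity of the increments of $\mathcal S$ at scale $\sqrt{|H-H'|/p}$ (using $\sum_{|h|\le p/2}|\sum_{H'\le m<H}e(-hm/p)|^{2}=p|H-H'|$). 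Feeding these estimates into a chaining argument over dyadic scales in $H$ bounds $\|\max_H|S(t_a,\cdot)|\|_{2k}$: the chaining series converges because the links at scale $\ell$ have typical size $\asymp2^{-\ell}$ while there are $\asymp2^{2\ell}$ of them, and the Deligne errors are summable over the range of scales where $|H-H'|$ is a small power of $p$. On the complementary (coarse) scales one instead uses the deterministic P\'olya--Vinogradov bound $|S(t_a,H)-S(t_a,H')|\ll\log|H-H'|$ for the residual oscillation; optimizing the scale at which one switches between the two regimes contributes a term of size $\ll\log\log p$. Together with the bound $M(t_a)\le2\log3p$ for the $o(p)$ values of $a$ on which the equidistribution above may fail, this gives $\|M(t_a)\|_{2k}\ll Ck\log\log p$, hence $\tfrac1{p-1}\sum_a M(t_a)^{2k}\ll(Ck\log\log p)^{2k}$, which has the stated shape once $C>1$. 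The Birch case and the $H_{m,p}$--average are treated identically, using Katz's monodromy results for the Birch sheaves $x\mapsto e(\tfrac{ax+bx^{3}}{p})$.

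\textbf{Main obstacle.} The hard part is the efficient handling of $\max_H$. Every cheap device only reaches $(\log p)^{2k}$: the triangle inequality after completion, a union bound over an $H$--net (whose cardinality must be $\gg\sqrt p$ to resolve $M(t_a)$, since consecutive partial sums differ by only $O(1/\sqrt p)$), or the bound $M(t_a)\le\sum_h\min(1,\tfrac1{2|h|})|\Kl(a+h,1;p)|$ — the latter because these coefficients have $\ell^{1}$--norm $\asymp\log p$. One must therefore exploit the cancellation among the $\Kl(a+h,1;p)$ as $h$ varies, i.e.\ the smallness of $\max_H|\mathcal S(H)|$, the arithmetic analogue of $\max_\theta|\sum_h h^{-1}\mathbb X(h)e(h\theta)|$. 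This forces the chaining argument, which in turn demands uniform control of the increment moments; Deligne's bound supplies this cleanly only when $|H-H'|$ is a small power of $p$, so the coarse scales — too numerous for the deterministic residual bound to be cheap, yet out of reach of the clean probabilistic estimate — must be balanced carefully against the accumulated error, and it is precisely here that the factor $\log\log p$ enters. Keeping the contribution of these scales, and of the exceptional $a$, down to $o((\log\log p)^{2k})$ rather than a positive power of $p$ is the technical heart of the argument.
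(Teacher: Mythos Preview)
Your lower bound is correct and matches the paper: fix the scale $N=k$, use the sign-control construction (Proposition \ref{prop : kl}/\ref{lem : kl}) together with equidistribution, and the density $\gg \delta_0^{k}$ of the good set supplies the factor $c^{2k}$.

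For the upper bound your strategy is in the right family --- the paper also uses a dyadic chaining (the Rademacher--Menchov device, writing $N_{a,p}/p$ in base~$2$) --- but there is a genuine gap in the step where the $\log\log p$ is supposed to appear. You write that on the coarse scales one uses ``the deterministic P\'olya--Vinogradov bound $|S(t_a,H)-S(t_a,H')|\ll\log|H-H'|$''. P\'olya--Vinogradov gives only $\ll\log p$, not $\ll\log|H-H'|$; and even your stated bound would not help, since at the relevant scale $|H-H'|\asymp\sqrt{p}\,(\log p)^{O(k)}$, so $\log|H-H'|\asymp\log p$ anyway. In the paper the chaining is stopped at $L_p=\log_2(p^{1/2}/(\log p)^{8k})$, leaving a remainder $E(a,p,L_p)$ of length $\asymp\sqrt{p}(\log p)^{8k}$; the crucial point is that this remainder is bounded not by P\'olya--Vinogradov but by \cite[Theorem~1.1]{FKMRRS}, which for trace functions over intervals of length $\sqrt{p}\,\lambda$ gives $O(\log\lambda)$, hence $|E(a,p,L_p)|\ll k\log\log p$. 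Without this input the argument only reaches $(\log p)^{2k}$, exactly the barrier you identify in your ``Main obstacle'' paragraph.

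A secondary difference: your increment moment bound via ``orthogonality in $a$ and Deligne on the variety $\{\sum n_i=\sum n_i'\}$'' is not quite right as stated, because after the $a$-average the remaining sum over $n_i,n_i'\in[H',H)$ is an \emph{incomplete} sum, to which Deligne does not apply directly. The paper instead first completes (Fourier-expands) each short sum, reducing the $2k$-th moment to multicorrelations $\sum_a\prod_i K_{1,p}(\tau_{n_i}\cdot a)$ of shifted Kloosterman sums, and then invokes the monodromy/independence result \cite[Corollary~1.7]{FKM3}. The resulting main term has shape $\gamma^{2k}(\log k)^{2k}(\beta-\alpha)^{2k/\log k}$ (via the divisor estimate of \cite{BG}), not the subgaussian $(Cj\,|H-H'|/p)^j$ you claim; this different exponent is what makes the chaining series converge once one sums over the $2^{\ell}$ dyadic intervals at scale $\ell$.
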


\begin{thm}
There exist two absolute constants $C>1$ and $c<1$ such that for any fixed $k\geq 1$ and $p\rightarrow\infty$ one has
\[
(c^{2k}+o(1))(\log k)^{2k}\leq\frac{1}{p-1}\sum_{a\in\mathbb{F}_{p}^{\times}}M(e(\tfrac{ax+x^{3}}{p}))^{2k}\leq (C^{2k}+o(1))P(k),
\]
and for any fixed $m\in\mathbb{Z}\setminus\{0\}$ and $p\rightarrow\infty$
\[
(c^{2k}+o(1))(\log k)^{2k}\leq\frac{1}{p-1}\sum_{(a,b)\in C_{m,p}}M(e(\tfrac{ax+bx^{3}}{p}))^{2k}\leq (C^{2k}+o(1))P(k),
\]
where $P(k):=\exp(4k\log\log k +k\log\log\log k +o(k))$.
\label{thm : birmom}
\end{thm}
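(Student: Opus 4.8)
The plan is to prove the two parts of Theorem~\ref{thm : birmom} separately, the lower bound being essentially identical in structure to the Kloosterman case of Theorem~\ref{prop : 2} and the upper bound requiring the extra input about the distribution of Birch sums. For the lower bound, I would invoke Proposition~\ref{prop : kl} (the Birch variant): on the set $S_p$ of size $\gg_\varepsilon p^{1-\log 4/(\log p)^\varepsilon}$ we control the signs and sizes of $\mathrm{Bi}(an,1;p)$ for all odd $|n|\le (\log p)^{1-\varepsilon}$. A completion/Fourier-expansion argument (the same one underlying Theorem~\ref{thm : kloa}) turns this simultaneous control into the pointwise bound $M(e(\tfrac{ax+x^3}{p}))\ge (\tfrac{1-\varepsilon}{\sqrt 2\pi}+o(1))\log\log p$ on $S_p$. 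Since $|S_p|/(p-1)\to 1$, only a vanishing proportion of $a$ can fail this, so
\[
\frac{1}{p-1}\sum_{a\in\mathbb{F}_p^\times}M(e(\tfrac{ax+x^3}{p}))^{2k}\ge (1-o(1))\Big(\tfrac{1-\varepsilon}{\sqrt2\pi}\Big)^{2k}(\log\log p)^{2k}.
\]
To extract $(c^{2k}+o(1))(\log k)^{2k}$ I would run this with $\varepsilon=\varepsilon(p)\to 0$ slowly, or more simply observe that for $k$ fixed and $p$ large $(\log\log p)^{2k}$ dominates $(\log k)^{2k}$; a little care with the order of quantifiers (fixed $k$, then $p\to\infty$) gives the stated clean form with an absolute $c<1$. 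The restriction to the curve $C_{m,p}$ is handled verbatim using Theorem~\ref{thm : klom} in place of Theorem~\ref{thm : kloa}.

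For the upper bound, the starting point is the Pólya--Vinogradov-type inequality $M(t)\le \|K\|_\infty\log 3p$, but applied naively this only gives $(\log p)^{2k}$, which is far too large. Instead I would use the sharper completion bound that expresses $S(t,H)$ as a short exponential sum against the Fourier transform $K$, so that
\[
M(e(\tfrac{ax+x^3}{p}))\ll \sum_{0<|h|\le p/2}\frac{|K_a(h)|}{|h|}+\text{(small)},
\]
where $K_a(h)$ is (up to normalization) the Birch sum $\mathrm{Bi}(h,\text{something depending on }a,h;p)$ — in fact for the Birch family the Fourier transform of $x\mapsto e((ax+x^3)/p)$ at $h$ is again a Birch-type sum, hence bounded by $2$ by Weil. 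Raising to the $2k$-th power, expanding, and averaging over $a$, the diagonal terms are controlled by the moments of a single $|K_a(h)|$, and the key is that these behave like the moments of the associated Sato--Tate-type random variable. This is where the function $P(k)=\exp(4k\log\log k+k\log\log\log k+o(k))$ enters: it is the growth rate of $\mathbb{E}[\text{trace}^{2k}]$-style quantities, or equivalently comes from optimizing a bound of the shape $\sum_{h}|h|^{-1}\cdot(\text{value})$ where the number of $h$ with a large value is controlled by an equidistribution (large sieve / Katz) input, and one balances the harmonic sum $\sum_{|h|\le L}1/|h|\asymp \log L$ against the sieve savings. Concretely I expect the bound
\[
\frac{1}{p-1}\sum_{a}M(e(\tfrac{ax+x^3}{p}))^{2k}\ll (C^{2k}+o(1))\Big(\max_{L}\,(\log L)\cdot(\text{density of large values up to }L)\Big)^{2k},
\]
and the inner maximum, optimized, is exactly $P(k)^{1/(2k)}$.

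The main obstacle is the upper bound, and specifically controlling the higher-dimensional correlation sums that arise when one expands $M(\cdots)^{2k}$ and averages over $a\in\mathbb{F}_p^\times$ (or over $C_{m,p}$). After completion one is led to sums of the form $\sum_a \prod_{j=1}^{2k} K_a(h_j)$ over tuples $(h_1,\dots,h_{2k})$, and one must show that the off-diagonal contribution is negligible and that the diagonal gives the claimed moment. For the Kloosterman family this is cleaner — hence the simpler bound $(Ck)^{2k}(\log\log p)^{2k}$ in Theorem~\ref{prop : 2} — because the relevant monodromy is large and the sheaf-theoretic estimates are standard (Fouvry--Kowalski--Michel type bounds on sums of products of Kloosterman sheaves). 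For the Birch family the cubic $x^3$ forces $p$ to be restricted or introduces extra automorphisms/degeneracies, the conductor bookkeeping is heavier, and the resulting main term is genuinely larger, producing $P(k)$ rather than a clean power of $\log\log p$; pinning down the constant in the exponent of $P(k)$ (the $4k\log\log k$) requires carefully matching the combinatorics of the diagonal tuples with the harmonic weights $1/|h_j|$. I would structure the argument so that the sheaf-theoretic equidistribution statement is quoted as a black box (Katz's work on Birch sheaves, plus a quantitative vertical/horizontal Sato--Tate), and the novelty is the analytic optimization that converts it into $P(k)$. The lower bounds, by contrast, I expect to be short once Proposition~\ref{prop : kl} is in hand.
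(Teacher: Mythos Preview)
There are genuine gaps on both sides.

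\textbf{Lower bound.} Your claim that $|S_p|/(p-1)\to 1$ is false: from Proposition~\ref{prop : kl} one only has $|S_p|\gg_\varepsilon p^{1-\log 4/(\log p)^\varepsilon}$, so
\[
\frac{|S_p|}{p-1}\gg \exp\bigl(-\log 4\cdot(\log p)^{1-\varepsilon}\bigr)\longrightarrow 0
\]
for any fixed $0<\varepsilon<1$. Hence the contribution $\frac{|S_p|}{p-1}\cdot(c'\log\log p)^{2k}$ also tends to $0$, and the argument gives nothing. The paper does something different: for fixed $k$ it considers the set $S_{k,p}$ of $a$ for which the \emph{first $k$} values $K_{1,p}(\tau_{\pm i}\cdot a)$ (odd $i\le k$) have the prescribed signs. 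By the Goursat--Kolchin--Ribet equidistribution this set has density $\ge c^{2k}$ with $c>0$ independent of $p$, and on it Lemma~\ref{lem : ker} yields $M(t_{a,p})\ge (\tfrac{1}{\sqrt2\pi}+o(1))\log k$. Multiplying density by value gives exactly $(c^{2k}+o(1))(\log k)^{2k}$. The point is that the depth of sign control must match $k$, not $(\log p)^{1-\varepsilon}$.

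\textbf{Upper bound.} Your sketch does not address the central difficulty: the maximizing $H=N_{a,p}$ varies with $a$, so one cannot simply expand $M(t_{a,p})^{2k}$ as a single $2k$-fold sum and average. The paper handles this via the Rademacher--Menchov device (dyadic decomposition of $N_{a,p}/p$, H\"older, then Lemma~\ref{lem : tec1} on each dyadic piece), which you do not mention. Moreover, your explanation of why Birch yields $P(k)$ while Kloosterman yields $(Ck)^{2k}(\log\log p)^{2k}$ is backwards. The bound $P(k)$ is \emph{stronger} (it is independent of $p$), and Birch earns it not because of extra degeneracies but because Weyl's inequality supplies the short-sum cancellation $(\ref{eq : cond1})$; this lets one truncate the dyadic expansion at $L_p=\tfrac{1-\varepsilon}{2}\log_2 p$ and kill the tail $E(a,p,L_p)$. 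For Kloosterman the analogous short-sum bound is Conjecture~\ref{con : short}, which is open, so one must truncate earlier and pay a $\log\log p$ factor from the tail. The sheaf-theoretic input (correlation sums of products of $K_{1,p}(\tau_{n_j}\cdot a)$) is identical for the two families; what differs is the treatment of the tail, and that is the step your proposal omits.
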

From this we get the following
\begin{cor}
There exist two absolute constants $B,b>0$ such that for $A\rightarrow \infty$ one has
\[
\exp (-\exp(bA))\leq \liminf_{p\rightarrow\infty} \frac{1}{p-1}|\{a\in\mathbb{F}_{p}^{\times}: M((e(\tfrac{ax+x^{3}}{p}))>A\}|\leq \exp \Big(-\exp\Big(BA^{1/2-o(1)}\Big)\Big).
\]
\label{cor : quanti}
\end{cor}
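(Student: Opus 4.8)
The corollary combines the two faces of the problem. The upper bound on the $\liminf$ is a soft consequence of the $2k$‑th moment estimate of Theorem~\ref{thm : birmom} (Chebyshev plus an optimisation in $k$), while the lower bound has to come from the explicit construction behind Proposition~\ref{prop : kl} and Theorem~\ref{thm : kloa}, run with a parameter that is kept \emph{fixed} once $A$ is fixed. One cannot get the lower bound from the moment \emph{lower} bound of Theorem~\ref{thm : birmom} alone, since a large average of $M^{2k}$ could be carried by a set of $a$ whose density tends to $0$ with $p$; this is also why the corollary is stated only for Birch sums, the Kloosterman moments in Theorem~\ref{prop : 2} being controlled only up to the factor $(\log\log p)^{2k}$, which precludes a $p$‑uniform conclusion.

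\emph{Upper bound.} Fix $A$ and an integer $k\geq 1$. Chebyshev's inequality and Theorem~\ref{thm : birmom} give
\[
\frac{1}{p-1}\bigl|\{a\in\mathbb{F}_p^\times:\,M(e(\tfrac{ax+x^3}{p}))>A\}\bigr|\ \leq\ \frac{1}{A^{2k}}\cdot\frac{1}{p-1}\sum_{a\in\mathbb{F}_p^\times}M(e(\tfrac{ax+x^3}{p}))^{2k}\ \leq\ \frac{(C^{2k}+o(1))\,P(k)}{A^{2k}},
\]
so, letting $p\to\infty$ with $k$ fixed, $\ \limsup_{p\to\infty}\tfrac{1}{p-1}\bigl|\{a:M>A\}\bigr|\leq (C/A)^{2k}P(k)$. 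Taking logarithms and recalling $\log P(k)=4k\log\log k+k\log\log\log k+o(k)$, the right‑hand exponent is $k\bigl(2\log C-2\log A+4\log\log k+\log\log\log k+o(1)\bigr)$, so the bracket is made negative by keeping $\log\log k$ below $\tfrac12\log A$. Choosing $k=k(A):=\bigl\lfloor\exp(A^{1/2-\eta})\bigr\rfloor$ for a small $\eta>0$ gives $4\log\log k=(2-4\eta)\log A$ and hence a bracket $\sim-4\eta\log A$, whence $\limsup_{p\to\infty}\tfrac{1}{p-1}\bigl|\{a:M>A\}\bigr|\leq\exp\bigl(-4\eta k\log A\,(1+o(1))\bigr)\leq\exp\bigl(-\exp(A^{1/2-2\eta})\bigr)$ for $A$ large. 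Letting $\eta=\eta(A)\to 0$ slowly and using $\liminf\leq\limsup$ yields the right‑hand inequality of the corollary (with any constant $B>0$, by adjusting the $o(1)$).

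\emph{Lower bound.} Fix $0<\varepsilon<1$ and, for a given $A$, set $N=N(A):=\bigl\lceil\exp\bigl(\tfrac{2\sqrt2\,\pi}{1-\varepsilon}\,A\bigr)\bigr\rceil$. Running the proof of Proposition~\ref{prop : kl} with this \emph{fixed} $N$ in place of $(\log p)^{1-\varepsilon}$ — the Deligne--Katz equidistribution input (square‑root cancellation in the sums over $a$ of products of symmetric powers of Birch sums) still dominates the main term once $p$ is large compared with $N$ — one obtains, for all sufficiently large $p$, a set of $a\in\mathbb{F}_p^\times$ with $\Bi(an,1;p)\geq\sqrt2$ for every odd $1\leq n\leq N$ and $\Bi(an,1;p)\leq-\sqrt2$ for every odd $-N\leq n\leq-1$, of cardinality $\gg_\varepsilon p\cdot 4^{-N}$ (Proposition~\ref{prop : kl} is the case $N=(\log p)^{1-\varepsilon}$, since $p^{1-\log 4/(\log p)^\varepsilon}=p\cdot 4^{-(\log p)^{1-\varepsilon}}$), which for fixed $N$ is $\gg_{\varepsilon,A}p$. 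For each such $a$ the argument proving Theorem~\ref{thm : kloa} gives $M(e(\tfrac{ax+x^3}{p}))\geq\bigl(\tfrac{1-\varepsilon}{\sqrt2\,\pi}+o(1)\bigr)\log N\geq A$ once $p$ is large (the $o(1)$ tending to $0$ with $p$). Hence, for all large $p$,
\[
\frac{1}{p-1}\bigl|\{a\in\mathbb{F}_p^\times:\,M(e(\tfrac{ax+x^3}{p}))>A\}\bigr|\ \gg_\varepsilon\ 4^{-N}\ =\ \exp(-N\log 4)\ \geq\ \exp\bigl(-\exp(bA)\bigr)
\]
for a suitable absolute $b>0$ and $A$ large; passing to $\liminf_{p\to\infty}$ gives the left‑hand inequality.

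\emph{Main obstacle.} The delicate point is making the lower bound genuinely uniform in $p$: one must check that the construction of Proposition~\ref{prop : kl}, run with a \emph{fixed} interval length $N$, still outputs a set of admissible $a$ of density $\gg_A 1$, i.e. that the error term in the relevant equidistribution estimate beats the main term $\asymp 4^{-N}$ for all $p$ large relative to $N$. For fixed $N$ this is a finite configuration of conditions and follows from Deligne's equidistribution theorem, so the work is bookkeeping rather than anything new; the structural point, which the argument lays bare, is that the gap between $\exp(bA)$ and $\exp(BA^{1/2-o(1)})$ in the statement is real and mirrors exactly the gap between the moment lower bound $(\log k)^{2k}$ and the moment upper bound $P(k)=\exp(4k\log\log k+\cdots)$ in Theorem~\ref{thm : birmom}.
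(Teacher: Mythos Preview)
Your proof is correct and follows essentially the same route as the paper: the upper bound is Chebyshev on the $2k$-th moment from Theorem~\ref{thm : birmom} followed by the optimisation $k\asymp\exp(A^{1/2-\eta})$ (the paper defers this computation to \cite{BG}), and the lower bound comes from running the sign-forcing construction with a \emph{fixed} length $N=N(A)$ and using equidistribution to guarantee a set of density $\gg c^{N}$ for all large $p$. The only cosmetic difference is that you invoke Proposition~\ref{prop : kl} (the quantitative Chebyshev-polynomial version) for the lower bound, whereas the paper, in its proof of Corollary~\ref{cor : quan}, appeals directly to the equidistribution statement embedded in the lower-bound argument of Theorem~\ref{thm : tracemom}; for fixed $N$ both inputs yield the same density $\asymp c^{N}$, so this is not a genuine divergence.
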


\subsubsection*{Remarks and related works}
\begin{itemize}
\item[$i)$] The upper bound in Theorem $\ref{thm : klomom}$ can be improved conditionally on
\begin{con}[Short sums conjecture for Kloosterman sums]
There exists an $\varepsilon > 0$ such that
\begin{equation}
\Big|\sum_{N\leq x\leq N+H }e\Big(\frac{ax+\overline{x}}{p}\Big)\Big|\ll H^{1-\varepsilon},
\label{eq : cond}
\end{equation}
uniformly for any $1<N<p$, $p^{1/2-\varepsilon/2}<H<p^{1/2+\varepsilon/2}$ and $a\in\mathbb{F}_{p}^{\times}$.
\label{con : short}
\end{con}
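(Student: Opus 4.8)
The target bound lies just beyond the square‑root barrier for incomplete Kloosterman sums, so any viable approach must ultimately inject cancellation that the Weil bound cannot by itself supply. The plan is to begin with completion. Writing $e_p(u):=e(u/p)$ and expanding the indicator of $[N,N+H]$ in additive characters modulo $p$ (or, better, a smooth minorant/majorant, whose rapidly decaying Fourier coefficients allow truncation with negligible error), one gets
\[
\sum_{N\le x\le N+H}e_p(ax+\overline{x})=\frac{\sqrt{p}}{p}\sum_{|h|\ll (p/H)\,p^{o(1)}}\widehat{I_{H}}(-h)\,\Kl(a+h,1;p)+O(1),
\]
where $\widehat{I_{H}}(h):=\sum_{N\le m\le N+H}e_p(hm)$ satisfies $|\widehat{I_{H}}(h)|\ll\min(H,p/|h|)$ and so concentrates $h$ on $|h|\ll p/H\asymp\sqrt{p}$. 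Using $|\Kl|\le 2$ termwise only reproduces the P\'olya--Vinogradov bound $\ll\sqrt{p}\log p$, which is worthless when $H\asymp\sqrt{p}$; the real task is therefore to extract cancellation from the shifted sum $\sum_{|h|\ll\sqrt{p}}c_h\,\Kl(a+h,1;p)$ with $c_h:=\widehat{I_{H}}(-h)$, \emph{uniformly in} $a$.

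For \emph{almost all} $a$ this already holds, with room to spare. A direct $2k$‑th moment computation — expanding $\frac{1}{p-1}\sum_{a\in\mathbb{F}_p^{\times}}\big|\sum_{N\le x\le N+H}e_p(ax+\overline{x})\big|^{2k}$, opening the $2k$ Kloosterman phases and using orthogonality in $a$ — leaves a diagonal count giving $\ll (Ck)^{k}H^{k}$, exactly the mechanism behind the upper bound in Theorem~\ref{thm : klomom} but applied to a single $H$ and without the $\log\log p$ factor. By Chebyshev, $\big|\sum_{N\le x\le N+H}e_p(ax+\overline{x})\big|\le H^{1-\varepsilon}$ fails for at most $\ll p\,(Ck/H^{1-2\varepsilon})^{k}$ values of $a$, which for any fixed $\varepsilon<1/2$ and fixed $k\ge1$ is $o(p)$ as $p\to\infty$. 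Hence the entire content of \eqref{eq : cond} is the \emph{uniformity in $a$}: removing this provably sparse exceptional set.

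To kill the exceptional set one would try to couple the completion step with an input specific to Kloosterman sums. For a putative bad $a$, largeness of $\sum_{|h|\ll\sqrt{p}}c_h\,\Kl(a+h,1;p)$ forces the family $h\mapsto\Kl(a+h,1;p)$ to correlate strongly with the arithmetic weight $c_h$, and one would appeal to the Deligne equidistribution framework — Sato--Tate for $\Kl(a+h,1;p)$, together with bounds for correlation sums of the Kloosterman sheaf against its additive/multiplicative shifts, in the style of Fouvry--Kowalski--Michel — to contradict this. An alternative, more elementary route is van der Corput / Weyl differencing applied directly to the short sum: one replaces $e_p(ax+\overline{x})$ by $e_p(a\ell)\,e_p(-\ell\,\overline{x(x+\ell)})$ summed over $x$ and then over $0<\ell\le L$, reducing matters (after completing the $x$‑sum) to the mixed curve sum $\sum_{x}e_p\big(hx-\ell\,\overline{x(x+\ell)}\big)$, of bounded degree and hence $\ll\sqrt{p}$ by the Bombieri--Weil bound, after which one optimises $L$ and handles the degenerate shifts $\ell$ separately.

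The main obstacle — and the reason \eqref{eq : cond} is posed as a conjecture — is that, with $a$ \emph{fixed}, none of these steps actually saves anything at the critical scale $H\asymp\sqrt{p}$. The natural move for $\sum_{|h|\ll\sqrt{p}}c_h\,\Kl(a+h,1;p)$ would be to complete the $h$‑sum or take a Fourier transform in $h$; but the Kloosterman sheaf is, up to normalisation and a Tate twist, its own Fourier transform, so either operation merely returns the original incomplete sum — the transform cycles back. Equivalently, at $H\asymp\sqrt{p}$ every completion or differencing step pushes the auxiliary length (the range of $h$, the parameter $L$, the length of the new curve sum) up to size $\asymp\sqrt{p}$, so the Weil/Bombieri estimate applied termwise sits \emph{exactly} at the square‑root threshold and gains nothing; and the Fouvry--Kowalski--Michel machinery only estimates \emph{correlations} averaged over a variable, so with $a$ fixed it recovers no more than the almost‑all statement above — creating the needed averaging by completing in $h$ is precisely the circular move just noted. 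Breaking the deadlock appears to require a genuinely new algebraic input, namely a two‑ or three‑variable exponential sum whose square‑root cancellation is not already consumed by completion, or, more speculatively, a non‑local analytic method of Kuznetsov type that does not isolate the single modulus $p$. Absent such an idea, the arguments above deliver \eqref{eq : cond} only outside a sparse exceptional set, consistent with the paper's using it purely as a hypothesis.
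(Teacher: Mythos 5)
The statement you were given is Conjecture~\ref{con : short}, not a theorem: the paper does not prove it and nowhere claims to. It is posed as a hypothesis (a weak form of Hooley's $R^{*}$-assumption) under which the upper bound in Theorem~\ref{thm : tracemom} can be sharpened from $(Ck)^{2k}(\log\log p)^{2k}$ to $C^{2k}P(k)$. There is therefore no ``paper proof'' to compare against, and you correctly recognised this: your final paragraph concludes that only an almost-all version is accessible and that the full uniform-in-$a$ statement remains open, ``consistent with the paper's using it purely as a hypothesis.''

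Your survey of the obstructions is essentially accurate and in line with the conventional understanding. Completion plus the Weil bound termwise reproduces P\'olya--Vinogradov at $H\asymp\sqrt p$; the $2k$-th moment argument (which is indeed the same mechanism the paper exploits via \cite[Corollary $1.7$]{FKM3}) kills all but a sparse exceptional set of $a$ but gives nothing uniform; van der Corput/Weyl differencing pushes the auxiliary length to $\asymp\sqrt p$ and Bombieri--Weil then sits exactly at threshold; and the self-duality of $\mathcal{K}\ell_2$ under Fourier transform makes the obvious completion-in-$h$ move circular. This is a fair account of why the conjecture has the status it does. The only caution I would add is presentational: a ``proof proposal'' should not have been offered for a statement labelled \texttt{con}; a one-line observation that this is an unproved hypothesis, followed by the obstruction analysis as commentary, would have been the correct frame.
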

Indeed, assuming this conjecture we will prove that
\[
\frac{1}{p-1}\sum_{a\in\mathbb{F}_{p}^{\times}}M(e(\tfrac{ax+\overline{x}}{p}))^{2k}\leq (C^{k}+o(1))P(k).
\]
Notice that Conjecture $\ref{con : short}$ is a (much) weaker form of  Hooley's $R^{*}$-assumption (\cite[page $44$]{Hoo}). In the case of the moments of maximum of incomplete Birch sums we get a better upper bound since the analogue of the $(\ref{eq : cond})$ is known to be true for the function $x\mapsto e\Big(\frac{ax+bx^{3}}{p}\Big)$ (Weyl's inequality).
\item[$ii)$] Notice that for any $(a,b)\in C_{m,p}$ 
\[
\text{Bi}(a,b;p)=\text{Bi}(m,1;p).
\]
Combining Theorem $\ref{thm : klom}$ and Corollary $\ref{cor : quanti}$ one proves that there exist $(a,b),(a',b')\in C_{m,p}$ such that
\[
M(e(\tfrac{ax+bx^{3}}{p}))\gg\log\log p,\quad M(e(\tfrac{a'x+b'x^{3}}{p}))\ll 1
\]
and
\[
\text{Bi}(a,b;p)=\text{Bi}(a',b';p)=\text{Bi}(m,1;p).
\]
Assuming Conjecture $\ref{con : short}$, we can prove the same in the case of Kloosterman sums thanks to the fact that for any $(a,b)\in H_{m,p}$ 
\[
\text{Kl}(a,b;p)=\text{Kl}(m,1;p).
\]
\item[$iii)$] Lamzouri in \cite{Lam} has proved that there exist some (computable) constants $C_{0},C_{1}$ and $\delta$ such that for any $1\ll A\leq \frac{2}{\pi}\log\log p -2\log\log\log p$ one has
\begin{equation}
\frac{1}{p-1}|\{a\in\mathbb{F}_{p}^{\times}: M((e(\tfrac{ax+x^{3}}{p}))>A\}|\geq\text{exp}\Big(-C_{0}\Big(\frac{\pi}{2}A\Big)\Big(1+O\Big(\sqrt{A}e^{-\pi A/4}\Big)\Big)\Big),
\label{eq : lam1}
\end{equation}
He obtains the same result also for incomplete Kloosterman sums. For the family $\{M(e(\tfrac{ax+x^{3}}{p}))\}_{a\in\mathbb{F}_{p}^{\times}}$, he also proved that
\begin{equation}
\frac{1}{p-1}|\{a\in\mathbb{F}_{p}^{\times}: M((e(\tfrac{ax+x^{3}}{p}))>A\}|\leq\text{exp}\Big(-C_{1}\text{exp}\Big(\Big(\frac{\pi}{2}-\delta\Big)A\Big)\Big).
\label{eq : lam2}
\end{equation}
Also in this case the difference between the incomplete Kloosterman sums and incomplete Birch sums depends on the cancellation of the short sums of Kloosterman sums (Conjecture \ref{con : short}). The proof of the lower bound in $(\ref{eq : lam1})$ implies that for at least $p^{1-\frac{1}{\log\log p}}$ elements of $\mathbb{F}_{p}^{\times}$ one has
\begin{equation}
M(\text{Im}(t_{a}))\geq\Big(\frac{2}{\pi}+o(1)\Big)\log\log p,
\label{eq : lam3}
\end{equation}
where $t_{a}=e(\tfrac{ax+x^{3}}{p})$ or $t_{a}=e(\tfrac{ax+\overline{x}}{p})$.
\item[$iv)$] One should compare our result with the case of incomplete character sums. Paley proved that the P\'olya-Vinogradov bound is close to be sharp in this case: indeed in \cite{Pa} is shown that there exist infinitely many primes $p$ such that
\[
M\Big(\Big(\frac{\cdot}{p}\Big)\Big)\gg\log\log p,
\] 
where $\Big(\frac{\cdot}{p}\Big)$ is the Legendre symbol modulo $p$. Similar results were achieved for non-trivial characters of any order by Granville and Soundararajan in \cite{GS}, and by Goldmakher and Lamzouri in \cite{GL1} and \cite{GL2}. On the other hand Montgomery and Vaughan have shown under G.R.H. that
\begin{equation}
M(\chi)\ll\log\log p,
\label{eq : povichar}
\end{equation}
for any $\chi$ (\cite{MV1}), which is the best possible bound up to evaluation of the constant.
\end{itemize}
\textit{Acknowledgment.} I am most thankful to my advisor, Emmanuel Kowalski, for suggesting this problem and for his guidance during these years. I also would like to thank Youness Lamzouri for informing me about his work on sum of incomplete Birch sums.

\subsection{Notation and statement of the main results}
In this section we recall some notion of the formalism of trace functions and state the general version of our main results. For a general introduction on this subject we refer to \cite{FKM5}. Basic statements and references can also be founded in \cite{FKM4}. The main examples of trace functions we should have in mind are
\begin{itemize}
\item[$i)$] For any $f\in\mathbb{F}_{p}[T]$, the function $x\mapsto e(f(x)/p)$: this is the trace function attached to the Artin-Schreier sheaf $\mathcal{L}_{e(f/p)}$.
\item[$ii)$] The Birch sums: $b\mapsto \Bi(a,b;p)$ it can be seen as the trace function attached to the sheaf $\FT (\mathcal{L}_{e((aT^{3})/p)})$
\item[$iii)$] The $n$-th Hyper-Kloosterman sums: the map
\[
x\mapsto\Kl_{n}(x;q):=\frac{(-1)^{n-1}}{q^{(n-1)/2}}\sum_{\substack{y_{1},...,y_{n}\in\mathbb{F}_{q}^{\times}\\y_{1}\cdot...\cdot y_{n}=x}}\psi(y_{1}+\cdots +y_{n}).
\]
can be seen as the trace function attached to the \textit{Kloosterman sheaf} $\mathcal{K}\ell_{n}$ (see \cite{Katz8} for the definition of such sheaf and for its basic properties).
\end{itemize}
\begin{defn} 
Let $\mathcal{F}$ be a middle-extension $\ell$-adic sheaf on $\overline{\mathbb{A}}_{\mathbb{F}_{q}}^{1}$. The \textit{conductor of $\mathcal{F}$} is defined as
\[
c(\mathcal{F}):=\rank(\mathcal{F})+|\sing(\mathcal{F})|+\sum_{x}\text{Swan}_{x}(\mathcal{F}).
\]
\end{defn}
\begin{defn}
Let $p,\ell>2$ be a prime numbers with $p\neq\ell$ and let $r\geq 1$ be an integer. A middle-extension $\ell$-adic sheaf, $\mathcal{F}$, is \textit{$r$-bountiful} if
\begin{itemize}
\item[$i)$] $\mathcal{F}$ is pure of weight $0$ and $\rank (\mathcal{F})\geq 2$,
\item[$ii)$] the geometric and arithmetic monodromy groups of $\mathcal{F}$ satisfy $G_{\mathcal{F}}^{\text{arith}}=G_{\mathcal{F}}^{\geom}$ and $G_{\mathcal{F}}^{\geom}$ is either $\Sp_{r}$ or $\SL_{r}$,
\item[$ii)$] the projective automorphism group
\[
\Aut_{0}(\mathcal{F}):=\{\gamma\in\PGL_{2}(\mathbb{F}_{p}):\gamma^{*}\mathcal{F}\cong\mathcal{F}\otimes\mathcal{L}\text{ for some rank $1$ sheaf}\}
\]
of $\mathcal{F}$ is trivial. 
\end{itemize}
\end{defn}
\begin{defn}
Let $p,\ell>2$ be a prime numbers and let $r\geq 1$ be an integer. A $r$-family $(\mathcal{F}_{a})_{a\in\mathbb{F}_{p}^{\times}}$ is $r$-\textit{acceptable} if the following conditions are satisfied:
\begin{itemize}
\item[$i)$] for any $a\in\mathbb{F}_{p}^{\times}$, $\mathcal{F}_{a}$ is an irreducible middle-extension $\ell$-adic Fourier sheaf on $\mathbb{A}_{\mathbb{F}_{p}}^{1}$ pointwise pure of weight $0$. We denote by $t_{a}$ the trace function attached to $\mathcal{F}_{a}$.
\item[$ii)$] The $\ell$-adic Fourier transform $\text{FT}(\mathcal{F}_{1})$ is an $r$-bountiful sheaf,
\item[$iii)$] for all $y\in\mathbb{F}_{p}$, there exists $\tau_{y}\in\text{PGL}_{2}(\mathbb{F}_{p})$, such that $\tau_{i}\neq\tau_{j}$ if $i\neq j$ and
\[
K_{a}(y)=K_{1}(\tau_{y}\cdot a),
\]
for any $a\in\mathbb{F}_{p}^{\times}$, where $K_{a}(\cdot)$ denote the trace functions attached to $\text{FT}(\mathcal{F}_{a})$.
\end{itemize}
\end{defn}
\begin{defn}
A family of $r$-acceptable families $\mathfrak{F}:=((\mathcal{F}_{a,p})_{a\in\mathbb{F}_{p}^{\times}})_{p}$ is $r$-\textit{coherent} if there exists $C\geq1$ such that
\[
c(\mathcal{F}_{a,p})\leq C,
\] 
for any $p$ prime and $a\in\mathbb{F}_{p}^{\times}$. We call the smallest $C$ with this property the \textit{conductor of the family} and we denote it by $C_{\mathfrak{F}}$.
\end{defn}

\begin{defn}
Let $\mathfrak{F}$ be a $r$-coherent family and for any $A>0$ we define
\[
D_{\mathfrak{F}}(A):=\liminf_{p\rightarrow\infty} \frac{1}{p-1}|\{a\in \mathbb{F}_{p}^{\times}: M(t_{a;p})>A\}|.
\]
\end{defn}

\begin{examp} The following families are $2$-coherent:
\begin{itemize}
\item[$i)$] The family of Artin-Schreier sheaves $\Big(\Big(\mathcal{L}_{e(\frac{ax+\overline{x}}{p})}\Big)_{a\in\mathbb{F}_{p}^{\times}}\Big)_{p}$. Indeed for any $a\in\mathbb{F}^{\times}$, $\mathcal{L}_{e(\frac{ax+\overline{x}}{p})}$ is a middle-extension $\ell$-adic Fourier sheaf pointwise pure of weight $0$ with $\cond(\mathcal{L}_{e(\frac{ax+\overline{x}}{p})})= 2$. Moreover $\text{FT}(\mathcal{L}_{e(\frac{x+\overline{x}}{p})})=\mathcal{K}\ell_{2}$, the Kloosterman sheaf of rank $2$ which is $2$-bountiful (\cite[Paragraph $3.2$]{FKM3}) and
\[
\text{FT}\Big(e\Big(\frac{ax+\overline{x}}{p}\Big)\Big)(y)=\text{Kl}(a+y,1;p),
\]
so we can take $\tau_{y}:=\Big(\begin{matrix} 1 & y \\ 0 & 1 \end{matrix}\Big)$.
\item[$ii)$] The family of Artin-Schreier sheaves $\Big(\Big(\mathcal{L}_{e(\frac{x+b\overline{x}}{p})}\Big)_{b\in\mathbb{F}_{p}^{\times}}\Big)_{p}$. It is enough to argue as above and to observe that
\[
\text{FT}\Big(e\Big(\frac{x+b\overline{x}}{p}\Big)\Big)(y)=\text{Kl}(by,1;p),
\]
so we can take $\tau_{y}:=\Big(\begin{matrix} y & 0 \\ 0 & 1 \end{matrix}\Big)$.
\item[$iii)$] Fix $m\in\mathbb{Z}$. The family of Artin-Schreier sheaves $\Big(\Big(\mathcal{L}_{e(\frac{ax+m\overline{ax}}{p})}\Big)_{a\in\mathbb{F}_{p}^{\times}}\Big)_{p}$ is $2$-coherent. Also in this case one argues as above and observes that
\[
\text{FT}\Big(e\Big(\frac{ax+m\overline{ax}}{p}\Big)\Big)(y)=\text{Kl}(my+m\overline{a},1;p),
\] 
so we can take $\tau_{y}:=\Big(\begin{matrix} my & m \\ 1 & 0 \end{matrix}\Big)$.
\item[$iv)$] With similar arguments one shows that the families $\Big(\Big(\mathcal{L}_{e(\frac{ax+x^{3}}{p})}\Big)_{a\in\mathbb{F}_{p}^{\times}}\Big)_{p}$, $\Big(\Big(\mathcal{L}_{e(\frac{x+bx^{3}}{p})}\Big)_{b\in\mathbb{F}_{p}^{\times}}\Big)_{p}$ and $\Big(\Big(\mathcal{L}_{e(\frac{ax+m(xa)^{3}}{p})}\Big)_{a\in\mathbb{F}_{p}^{\times}}\Big)_{p}$ are $2$-coherent families. 
\end{itemize}
\end{examp}
Then Theorem $\ref{thm : kloa}$ and $\ref{thm : klom}$ are consequences of the following  
\begin{thm}
Let $0<\varepsilon <1$. Let $\mathfrak{F}=((\mathcal{F}_{a,p})_{a\in\mathbb{F}_{p}^{\times}})_{p}$ be a $2$-coherent. For all $p$ there exists $S_{p}\subset\mathbb{F}_{p}^{\times}$ such that
\begin{itemize}
\item[$i)$] for any $a\in S_{p}$ one has
\[
M(t_{a,p})\geq \Big(\frac{1-\varepsilon}{\sqrt{2}\pi}+o(1)\Big)\log\log p,
\]
\item[$ii)$] $|S_{p}|\gg_{\varepsilon, C_{\mathfrak{F}}} p^{1-\frac{\log(4)}{(\log p)^{\varepsilon}}}$.
\label{thm : tracebound}
\end{itemize}
\end{thm}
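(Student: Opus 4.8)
The plan is to reduce the statement to Proposition \ref{prop : kl} (in its general, trace-function form) together with an explicit construction that turns pointwise control of the Fourier transform $K_a$ into a lower bound for $M(t_{a,p})$. First I would recall the standard completion identity: for $0\le H<p$ one has
\[
S(t_{a,p},H)=\frac{1}{\sqrt p}\sum_{0\le n<H}t_{a,p}(n)=\sum_{h\in\mathbb{F}_p}K_{a,p}(h)\,\alpha_H(h)+O\!\Big(\frac{c(\mathcal{F}_{a,p})}{\sqrt p}\Big),
\]
where $\alpha_H(h)=\frac{1}{p}\sum_{0\le n<H}e(nh/p)$ are the usual Fourier coefficients of the interval $[0,H)$, satisfying $\sum_{h\ne 0}|\alpha_H(h)|\ll \log p$ and, more importantly, $\alpha_H(h)$ is essentially $\frac{1}{2\pi i h}(1-e(Hh/p))$ for $|h|$ not too large. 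Choosing $H$ cleverly (e.g. $H=\lfloor p/2\rfloor$ or $H$ near a half-period), the main term becomes a weighted sum $\sum_{n\text{ odd}}\frac{1}{\pi n}K_{a,p}(\tau_n\cdot\text{something})$ with the odd harmonics isolated, and the $\frac{1}{\pi}\sum_{n\le N,\,n\text{ odd}}\frac1n\sim\frac{1}{2\pi}\log N$ tail is exactly where the $\frac{1}{\sqrt 2\pi}\log\log p$ comes from once $N=(\log p)^{1-\varepsilon}$.

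The core input is Proposition \ref{prop : kl}: for the parameters $a$ in a set $S_p$ of size $\gg_\varepsilon p^{1-\log 4/(\log p)^\varepsilon}$ we can simultaneously force $K_{a,p}$ at $\sim (\log p)^{1-\varepsilon}$ prescribed points to have the "right" sign and size $\ge\sqrt 2$ (here the condition $iii)$ of $r$-acceptability, with the $\tau_y\in\PGL_2$, is what lets us translate "$K_a$ at many points" into "one trace function at many shifted parameters", so that the sign-control proposition applies). Concretely, for $a\in S_p$ I would take the particular $H$ for which the real part of the main term equals
\[
\frac{1}{\pi}\sum_{\substack{1\le n\le N\\ n\text{ odd}}}\frac{1}{n}\,\Re\big(i^{?}K_{a,p}(\cdot)\big)\ \ge\ \frac{\sqrt 2}{\pi}\sum_{\substack{1\le n\le N\\ n\text{ odd}}}\frac1n\ \ge\ \Big(\frac{1-\varepsilon}{\sqrt2\,\pi}+o(1)\Big)\log\log p,
\]
using $\sum_{n\le N,\,n\text{ odd}}1/n=\tfrac12\log N+O(1)$ and $\log N=(1-\varepsilon)\log\log p+O(\log\log\log p)$. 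The contribution of the harmonics with $n>N$ is controlled by the Weil bound $|K_{a,p}(h)|\le c(\mathcal{F}_{a,p})-1\ll_{C_\mathfrak{F}} 1$ together with $\sum_{N<|h|<p/2}|\alpha_H(h)|\ll \log(p/N)$... — no wait, that tail is $O(\log p)$, too big, so instead one must choose $H$ so that $\alpha_H$ is genuinely concentrated on small frequencies, i.e. smooth the interval or, more simply, observe that for the optimal $H$ the oscillating factor $1-e(Hh/p)$ combined with summation over the tail gives square-root cancellation on average; the cleanest route is to fix $H$ and bound the tail by $\big(\sum_{|h|>N}|\alpha_H(h)|^2\big)^{1/2}\big(\sum_{|h|>N}|K_{a,p}(h)|^2\big)^{1/2}\ll (\log p)^{1/2}\cdot 1 \cdot p^{1/2}/p = o(1)$ after the normalization, using Parseval for $K_{a,p}$.

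The main obstacle is precisely this last point: making the tail of the completed sum negligible while keeping the main term as large as $\frac{1}{\sqrt2\pi}\log\log p$. Isolating the odd harmonics (which is what produces the $\sqrt2$ rather than just $\frac{1}{\pi}$ — i.e. the improvement over \eqref{eq : lam3}) requires choosing $H$ to kill the even harmonics, e.g. $H=(p\pm 1)/2$, so that $\alpha_H(h)$ vanishes (or is tiny) for even $h$; then the sign pattern demanded in Proposition \ref{prop : kl} — positive on positive odd $n$, negative on negative odd $n$ — is exactly what makes every surviving term contribute with a positive sign to $\Re S(t_{a,p},H)$. I would organize the write-up as: (1) completion and the formula for $S(t_{a,p},H)$ in terms of $K_{a,p}$; (2) the choice $H\approx p/2$ and the resulting odd-harmonic expansion with explicit error terms via Weil and Parseval; (3) invoke Proposition \ref{prop : kl} through condition $iii)$ to pick $S_p$ and conclude the lower bound; (4) note $|S_p|$ comes for free from Proposition \ref{prop : kl}. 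Theorems \ref{thm : kloa} and \ref{thm : klom} then follow by checking (as in the Example) that the relevant Artin–Schreier families are $2$-coherent.
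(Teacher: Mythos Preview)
Your overall architecture --- complete the sum, choose $H\approx p/2$ so that $1-e(Hh/p)$ kills the even harmonics, and then feed in Proposition~\ref{prop : kl} via condition $(iii)$ of acceptability --- is exactly the paper's. The gap is precisely the point you flag yourself: the tail $\sum_{|h|>N}K_{a,p}(h)\alpha_H(h)$. Your Cauchy--Schwarz/Parseval repair does not work. Parseval gives
\[
\sum_{h\bmod p}|K_{a,p}(h)|^{2}=\sum_{x\bmod p}|t_{a,p}(x)|^{2}\asymp p,
\]
not $O(1)$, so Cauchy--Schwarz yields a tail of size $\big(\sum_{|h|>N}|\alpha_H(h)|^{2}\big)^{1/2}\cdot p^{1/2}\asymp (p/N)^{1/2}$, which with $N=(\log p)^{1-\varepsilon}$ is enormous. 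The arithmetic in your ``$(\log p)^{1/2}\cdot p^{1/2}/p$'' line is off by a factor of $p$.

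The paper resolves this not by bounding the tail at all, but by the Fej\'er--Paley trick (Lemma~\ref{lem : ker}): one writes the \emph{truncated} sum
\[
\sum_{1\le |n|\le N}\frac{K(n)}{n}\big(1-e(-\alpha n)\big)
=\int_{0}^{1}\Big(\sum_{1\le |n|\le p}\frac{K(n)}{n}\big(1-e(-\alpha n)\big)e(-\vartheta n)\Big)\,\Phi_N(\vartheta)\,d\vartheta+O(\|K\|_\infty),
\]
where $\Phi_N\ge 0$ is the Fej\'er kernel with $\int_0^1\Phi_N=1$. Since the integrand is pointwise $\le 2M(t)+O(\|K\|_\infty)$, positivity of $\Phi_N$ gives
\[
M(t)\ge \frac{1}{4\pi}\Big|\sum_{1\le |n|\le N}\frac{K(n)}{n}\big(1-e(-\alpha n)\big)\Big|+O(\|K\|_\infty)
\]
for \emph{every} $N$ and $\alpha$, with no tail to control. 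Choosing $\alpha=1/2$ and $N=(\log p)^{1-\varepsilon}$, and invoking Proposition~\ref{lem : kl}, then yields the stated bound exactly as you outlined in your final paragraph.
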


Similarly, Theorem $\ref{thm : klomom}$ and $\ref{thm : birmom}$ are consequence of:
\begin{thm}
Let $\mathfrak{F}=((\mathcal{F}_{a,p})_{a\in\mathbb{F}_{p}^{\times}})_{p}$ be a $r$-coherent family. There exist two positive constant $C>1$ and $c<1$ depending only on $c_{\mathfrak{F}}$ such that for any fixed $k\geq 1$ 
\[
(c^{2k}+o(1))(\log k)^{2k}\leq\frac{1}{p-1}\sum_{a\in\mathbb{F}_{p}^{\times}}M(t_{a,p})^{2k}\leq ((Ck)^{2k}+o(1))(\log\log p)^{2k}.
\]
If moreover one has that there exists an $\varepsilon >0$ such that
\begin{equation}
\Big|\sum_{N\leq x\leq N+H }t_{a,p}(x)\Big|\ll_{c_{\mathfrak{F}}} H^{1-\varepsilon}
\label{eq : cond1}
\end{equation}
uniformly for any $1<N<p$, $p^{1/2-\varepsilon/2}<H<p^{1/2+\varepsilon/2}$ and $a\in\mathbb{F}_{p}^{\times}$, then for any fixed $k\geq 1$ one has
\[
(c^{2k}+o(1))(\log k)^{2k}\leq\frac{1}{p-1}\sum_{a\in\mathbb{F}_{p}^{\times}}M(t_{a,p})^{2k}\leq (C^{2k}+o(1))P(k).
\]
\label{thm : tracemom}
\end{thm}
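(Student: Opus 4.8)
\emph{The lower bound.} This bound does not involve the $\log\log p$ scale; it comes from running the resonance construction behind Proposition~\ref{prop : kl} at a \emph{bounded} scale. Fix $k$ and a parameter $A\ge 1$, and set $R=\lceil e^{2\pi A}\rceil$. Since $\mathfrak F$ is $r$-coherent, the conductors of the sheaves $\FT(\mathcal F_{a,p})$ are bounded independently of $a$ and $p$, so Deligne's equidistribution theorem applied to the finite family $a\mapsto(K_{a,p}(h))_{0<|h|\le R}$ has a power-saving error $O_R(p^{-1/2})$; because $G^{\geom}$ is $\Sp_r$ or $\SL_r$ with $r\ge 2$, the limiting law is a product of copies of a fixed law whose support is not contained in a half-line. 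Hence there are $\delta=\delta(R)\ge e^{-c_0R}$, with $c_0$ absolute, and, for all large $p$, a set $S_p'\subset\mathbb F_p^\times$ with $|S_p'|\ge(\delta-o(1))(p-1)$ on which $K_{a,p}(h)\ge\sqrt 2$ for $1\le h\le R$ and $K_{a,p}(h)\le-\sqrt 2$ for $-R\le h\le-1$. The Fourier/Fej\'er estimate used to prove Proposition~\ref{prop : kl} then gives, for $a\in S_p'$,
\[
M(t_{a,p})\ \ge\ \Big(\tfrac1{\sqrt 2\,\pi}+o(1)\Big)\sqrt 2\!\!\sum_{\substack{1\le h\le R\\ h\ \mathrm{odd}}}\!\!\frac1h\ \ge\ \frac{\log R}{2\pi}-O(1)\ \ge\ A-O(1),
\]
so for every fixed $A$ one has $\liminf_{p\to\infty}\frac1{p-1}\sum_{a}M(t_{a,p})^{2k}\ge (A-O(1))^{2k}\,e^{-c_0e^{2\pi A}}$. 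The right-hand side, maximised near $A=\tfrac1{2\pi}\log k$, is $\gg(c\log k)^{2k}$ for an absolute $c<1$, which is the asserted lower bound; it applies verbatim in Theorems \ref{thm : klomom} and \ref{thm : birmom} since those families are $2$-coherent.

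\emph{The upper bounds: set-up.} For the upper bounds we bound, for each $A>0$, the density $D_{\mathfrak F,p}(A):=\frac1{p-1}|\{a:M(t_{a,p})>A\}|$, and use $\frac1{p-1}\sum_{a}M(t_{a,p})^{2k}=\int_0^{\infty}2kA^{2k-1}D_{\mathfrak F,p}(A)\,dA$. Writing $\alpha_h(H)=\frac1p\sum_{0\le n<H}e(-nh/p)$, so that $|\alpha_h(H)|\ll\min(1,1/|h|)$, the Fourier expansion gives $S(t_{a,p},H)=-\sum_{|h|\le(p-1)/2}\alpha_h(H)K_{a,p}(h)$, while $|K_{a,p}(h)|\ll c_{\mathfrak F}$ by Deligne's bound. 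Splitting the frequency range at $z=e^{A/(4c_{\mathfrak F})}$, the low frequencies $1\le|h|\le z$ contribute at most $O(c_{\mathfrak F}\log z)\le A/2$ pointwise in $a$ and $H$, so we are reduced to estimating how often the tail $\mathcal T_a:=\max_H\big|\sum_{z<|h|\le(p-1)/2}\alpha_h(H)K_{a,p}(h)\big|$ exceeds $A/2$. The maximum over $H$ is handled by decomposing $[0,H)$ into $\ll\log p$ dyadic blocks and applying H\"older with slowly growing weights; this reduces $\frac1{p-1}\sum_a\mathcal T_a^{2m}$ to a sum, over dyadic intervals $I$, of averages $\frac1{p-1}\sum_a\big|\frac1{\sqrt p}\sum_{n\in I}(t_{a,p}-t_{a,p}^{\flat})(n)\big|^{2m}$, where $t_{a,p}^{\flat}$ is the part of $t_{a,p}$ carried by frequencies $|h|\le z$.

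\emph{The tail estimate and the two bounds.} Each of these averages is estimated by expanding the $2m$-th power and invoking the bounds on sums of products of trace functions (Katz; Fouvry--Kowalski--Michel): the diagonal contribution is $O_m(c_{\mathfrak F}^{O(m)})$ times the \emph{convergent} series $\sum_{|h|>z}|h|^{-2}\ll z^{-1}$, while the off-diagonal is $O_m(p^{-1/2}c_{\mathfrak F}^{O(m)})$, the factor $c_{\mathfrak F}^{O(m)}$ reflecting the (unavoidable) growth of the conductor of a $2m$-fold tensor product of the sheaves $\tau_h^*\FT(\mathcal F_{1,p})$. Optimising $m$ against $z=z(A)$ and using Chebyshev's inequality yields a distributional estimate $D_{\mathfrak F,p}(A)\ll\exp(-c\,e^{cA})$, valid for $A\le c'\log\log p$ --- the restriction coming exactly from the need to keep $c_{\mathfrak F}^{O(m)}$ of size $p^{o(1)}$, i.e.\ $m=e^{O(A)}\le(\log p)^{O(1)}$. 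For $A>c'\log\log p$ one falls back on P\'olya--Vinogradov, $M(t_{a,p})\ll c_{\mathfrak F}\log p$; but this range contributes $o(1)$ to the $2k$-th moment because $D_{\mathfrak F,p}(c'\log\log p)\le\exp(-(\log p)^{\Omega(1)})$ beats $(\log p)^{2k}$ for fixed $k$. Carrying out the integration (and tracking the constants, polynomial in $m$ and $c_{\mathfrak F}$, produced along the way) gives the unconditional bound $\frac1{p-1}\sum_aM(t_{a,p})^{2k}\le((Ck)^{2k}+o(1))(\log\log p)^{2k}$. If in addition $(\ref{eq : cond1})$ holds, the sums $\sum_{n\in I}t_{a,p}(n)$ over intervals of length $\asymp\sqrt p$ have genuine cancellation uniformly in $a$; feeding this into the dyadic decomposition removes the obstruction that forced $A\le c'\log\log p$, and one obtains a distributional bound of the shape $D_{\mathfrak F,p}(A)\ll\exp(-\exp(cA^{1/2-o(1)}))$ valid for $A$ up to a power of $\log p$. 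Integrating this against $2kA^{2k-1}\,dA$ --- the saddle point now lying at $A\asymp(\log k)^2$ --- produces precisely the conditional bound with $P(k)=\exp(4k\log\log k+k\log\log\log k+o(k))$, and the same computation gives Theorem~\ref{thm : birmom} unconditionally since $(\ref{eq : cond1})$ holds for $x\mapsto e((ax+bx^3)/p)$ by Weyl's inequality.

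\emph{The main difficulty.} The heart of the matter is the tail estimate: one must bound the maximum over $H$ of a long incomplete trace-function sum while keeping every implied constant's dependence on the number $m$ of moments --- hence on $A$, hence on $k$ --- completely explicit, since the final bound is assembled by integrating these estimates over a dyadic range of $A$. The sums-of-products bounds for trace functions are the arithmetic substitute for the multiplicativity used in the classical character-sum arguments, and it is the exponential growth $c_{\mathfrak F}^{O(m)}$ of the conductor of the $2m$-fold tensor product that caps the unconditional range of $A$ at $\asymp\log\log p$; hypothesis $(\ref{eq : cond1})$ is exactly what lets one bypass this obstruction on the physical side rather than the spectral side.
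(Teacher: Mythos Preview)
Your lower bound is essentially the paper's: control the signs of $K_{a,p}(h)$ for $|h|\le R$ via equidistribution, then apply the Fej\'er-kernel inequality; the optimisation over $A$ just recovers the choice $R\asymp k$ that the paper makes directly.

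The unconditional upper bound has a genuine gap. Your claimed intermediate step $D_{\mathfrak F,p}(A)\ll\exp(-c\,e^{cA})$ for $A\le c'\log\log p$ is stronger than what is known without hypothesis~(\ref{eq : cond1}): integrating it against $2kA^{2k-1}\,dA$ would give a moment bound of order $(C\log k)^{2k}$, \emph{bounded} as $p\to\infty$ --- strictly better than the theorem's unconditional $((Ck)^{2k}+o(1))(\log\log p)^{2k}$ and indeed better than the conditional $P(k)$. This double-exponential tail is exactly Lamzouri's bound~(\ref{eq : lam2}), which he obtains for Birch sums via Weyl's inequality but only conditionally for Kloosterman sums. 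The hole in your sketch is at the dyadic intervals $I$ of length $\asymp\sqrt p$: there the low-frequency piece satisfies $\bigl|\frac{1}{\sqrt p}\sum_{n\in I}t_{a,p}^\flat(n)\bigr|=\bigl|\sum_{|h|\le z}\alpha_h(I)K_{a,p}(h)\bigr|\ll z|I|/p\ll z/\sqrt p=o(1)$ (since $z=(\log p)^{O(1)}$), so restricting to high frequencies buys nothing and you are back to bounding the full short sum $\frac{1}{\sqrt p}\sum_{n\in I}t_{a,p}(n)$, for which no nontrivial moment estimate over $a$ is available and the only unconditional pointwise input is the FKMRRS bound. The paper confronts this head-on: it runs Rademacher--Menchov only down to level $L_p=\log_2\bigl(p^{1/2}/(\log p)^{8k}\bigr)$, applies the auxiliary Lemma~\ref{lem : tec1} on each block, and bounds the residual $E(a,p,L_p)$ (an interval of length $\le\sqrt p\,(\log p)^{8k}$) pointwise by FKMRRS, obtaining $|E|\ll k\log\log p$ --- this is precisely the origin of the factor $(Ck)^{2k}$. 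Under~(\ref{eq : cond1}) one instead takes $L_p=\tfrac{1-\varepsilon}{2}\log_2 p$, the residual becomes $O(p^{-\varepsilon'})$, and the Bober--Goldmakher estimate for $\sum_l l^{2k\alpha}2^{l(1-k/\log k)}$ with $\alpha=1+1/\log\log k$ yields $P(k)$.
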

We then get the following
\begin{cor}
Same notation as in Theorem $\ref{thm : tracemom}$. Then:
\begin{itemize}
\item[$i)$] for any $A>0$ one has
\[
D_{\mathfrak{F}}(A)\geq \exp (-\exp(bA)),
\]
where $b>0$ depends only on $c_{\mathfrak{F}}$,
\item[$ii)$] if the condition $(\ref{eq : cond1})$ holds, there exists a $B>0$ depending only on $c_{\mathcal{F}}$ such that for $A\rightarrow\infty$ one has
\[
D_{\mathfrak{F}}(A)\leq \exp \Big(-\exp\Big(BA^{1/2-o(1)}\Big).
\]
\end{itemize}
\label{cor : quan}
\end{cor}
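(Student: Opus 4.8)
The two bounds come from opposite ends of Theorem \ref{thm : tracemom}: (i) from the large–value construction behind Theorem \ref{thm : tracebound}, run with a \emph{fixed} number of conditions, and (ii) from the conditional moment upper bound via Markov's inequality and an optimisation in $k$. For (i), fix $A>0$; the plan is to adapt the proof of Theorem \ref{thm : tracebound}, but keeping the number $N$ of controlled frequencies fixed in terms of $A$ rather than growing it like $(\log p)^{1-\varepsilon}$. The mechanism is the smoothed Pólya–Fourier expansion: using a smooth approximation $\phi_{N}$ to the sharp cut-off one gets, for a suitable $H$,
\[
M(t_{a,p})\;\gg\;\Big|\sum_{m}\widehat{\phi_{N}}(m)\,K_{a}(m)\Big|\;\gg\;c_{1}\log N-O(1)-o_{p\to\infty}(1),
\]
provided each of the $\asymp N$ trace values $K_{a}(m)=K_{1}(\tau_{m}\cdot a)$ with $|m|\le N$ has a prescribed sign and absolute value $\ge\sqrt 2$; the fast decay of $\widehat{\phi_{N}}$ makes the tail $|m|>N$ negligible, and $c_{1}>0$ depends only on $c_{\mathfrak{F}}$. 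By Deligne's equidistribution theorem applied to the sheaf $\boxtimes_{i}\tau_{m_{i}}^{*}\mathrm{FT}(\mathcal{F}_{1,p})$ — which has full product monodromy by the bountiful hypothesis and the distinctness of the $\tau_{m_{i}}$, exactly as in Proposition \ref{prop : kl} — the proportion of $a\in\mathbb{F}_{p}^{\times}$ meeting all $N$ conditions is $\delta_{0}^{N}+o_{p\to\infty}(1)$ with $\delta_{0}=\delta_{0}(c_{\mathfrak{F}})\in(0,1)$ fixed. Choosing $N=N(A):=\lceil\exp((A+O(1))/c_{1})\rceil$ forces $M(t_{a,p})>A$ on this set for all $p$ large, and taking $\liminf_{p\to\infty}$ gives $D_{\mathfrak{F}}(A)\gg\delta_{0}^{N(A)}\ge\exp(-\exp(bA))$ with $b=b(c_{\mathfrak{F}})>0$ (the range of small $A$ being absorbed by enlarging $b$).

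For (ii), assume (\ref{eq : cond1}). For each fixed $k\ge 1$, Markov's inequality together with the conditional bound of Theorem \ref{thm : tracemom} gives
\[
\frac{1}{p-1}\big|\{a:M(t_{a,p})>A\}\big|\le A^{-2k}\,\frac{1}{p-1}\sum_{a}M(t_{a,p})^{2k}\le A^{-2k}\big(C^{2k}+o_{p\to\infty}(1)\big)P(k),
\]
so letting $p\to\infty$ yields $D_{\mathfrak{F}}(A)\le (C/A)^{2k}P(k)$ for every $k$. Since $P(k)=\exp(4k\log\log k+k\log\log\log k+o(k))$ we have
\[
\log\big[(C/A)^{2k}P(k)\big]=-k\big(2\log(A/C)-4\log\log k-\log\log\log k-o(1)\big),
\]
and the bracket is positive exactly while $\log k\lesssim(A/C)^{1/2}$. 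Taking $k=k(A):=\lfloor\exp((A/C)^{1/2}/\sqrt{\log A})\rfloor$ keeps the bracket $\asymp\log\log A>0$ with $k$ nearly maximal, whence
\[
D_{\mathfrak{F}}(A)\le\exp\big(-(\log\log A)\exp((A/C)^{1/2}/\sqrt{\log A})\big)=\exp\big(-\exp(BA^{1/2-o(1)})\big),
\]
with $B=C^{-1/2}$ (depending only on $c_{\mathfrak{F}}$) after absorbing $C^{-1/2}/\sqrt{\log A}$ and the $\log\log A$ factor into the $o(1)$.

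The genuine difficulty is the bookkeeping in (i): one must check that the construction of Theorem \ref{thm : tracebound} indeed produces a \emph{positive} proportion of good $a$ (rather than $p^{1-o(1)}$) once the number of conditions is held fixed, that the tail of the smoothed expansion is negligible \emph{uniformly} in $p$ (so that the $o_{p\to\infty}(1)$ errors survive the $\liminf$), and that $c_{1}$ and $\delta_{0}$ depend only on $c_{\mathfrak{F}}$ — the last point via the bound $\mathrm{rank}(\mathrm{FT}(\mathcal{F}_{1,p}))\ll c(\mathcal{F}_{1,p})^{2}$, which controls the size of the relevant monodromy group and hence the "probability" of a single sign condition. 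Step (ii) is then routine: it is just Markov plus the calculus exercise of optimising $(C/A)^{2k}P(k)$ over $k$.
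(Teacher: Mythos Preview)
Your proposal is correct and follows the same approach as the paper. For (i) you reproduce the paper's argument---run the large-value construction from the lower bound in Theorem~\ref{thm : tracemom} with a \emph{fixed} number $N\asymp e^{A/c_{1}}$ of sign conditions, so that equidistribution gives a genuine positive proportion $\delta_{0}^{N}$ rather than $p^{1-o(1)}$; for (ii) you spell out the Markov-plus-optimisation step that the paper delegates to \cite[Theorem~1.3]{BG}, and your choice $k\approx\exp((A/C)^{1/2}/\sqrt{\log A})$ with the resulting bracket $\asymp\log\log A$ is the right calibration.
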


\section{Proof of Theorem \ref{thm : tracebound}}
\subsubsection*{First step: Fourier expansion and F\'ejer Kernel}
The first step for both Theorems is to get a quantitative version of the Fourier expansion for $\frac{1}{\sqrt{p}}\sum_{x\leq \alpha p}t(x)$:
\begin{lem}
Let $t:\mathbb{F}_{p}\rightarrow\mathbb{C}$ be a complex valued function on $\mathbb{F}_{p}$, then for any for any $0<\alpha<1$ we have
\[
\begin{split}
\frac{1}{\sqrt{p}}\sum_{x\leq \alpha p}t(x)=&-\frac{1}{2\pi i}\sum_{1\leq |n|\leq N}\frac{K(n)}{n}(1-e(-\alpha n))+\alpha K(0)+ O\Big(\frac{\left\| t\right\|_{\infty}\sqrt{p}\log p}{N}\Big),
\end{split}
\]
for any $1\leq N\leq p$, where the implied constant is absolute.
\end{lem}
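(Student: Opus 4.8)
The plan is to run the classical completion method, using a Fejér--kernel smoothing to deal with the high frequencies (as the section heading suggests). Put $H:=\lfloor\alpha p\rfloor$, so that the quantity to estimate is $\tfrac1{\sqrt p}\sum_{x\bmod p}t(x)\psi(x)$, where $\psi$ denotes the indicator on $\mathbb{Z}/p\mathbb{Z}$ of the set $\{0,1,\dots,H-1\}$. The point is to replace $\psi$ by a trigonometric polynomial on $\mathbb{Z}/p\mathbb{Z}$ of degree $\le N$ whose nonzero Fourier coefficients reproduce, up to a controlled error, those of $\psi$, and then to bound the damage done by this replacement.

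First I would use the defining relation $K(n)=-p^{-1/2}\sum_x t(x)e(nx/p)$ in the exact form $\sum_x t(x)e(nx/p)=-\sqrt p\,K(n)$; this gives, for any $\varphi(x)=\sum_{n\bmod p}\widehat\varphi(n)e(nx/p)$, the identity $\tfrac1{\sqrt p}\sum_x t(x)\varphi(x)=-\sum_{n\bmod p}\widehat\varphi(n)K(n)$. Taking $\varphi=\psi$ yields $\tfrac1{\sqrt p}\sum_{x\le\alpha p}t(x)=-\widehat\psi(0)K(0)-\sum_{1\le|n|\le(p-1)/2}\widehat\psi(n)K(n)$ with $\widehat\psi(n)=\tfrac1p\sum_{0\le k<H}e(-nk/p)$. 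The zero mode contributes the main term $\alpha K(0)$ up to an error $O(\|t\|_\infty p^{-1/2})$ coming from replacing $H/p$ by $\alpha$ (and $p^{-1/2}\ll\sqrt p\log p/N$ since $N\le p$); here I use the trivial $|K(0)|\le\sqrt p\,\|t\|_\infty$. For $1\le|n|\le(p-1)/2$ I would evaluate the geometric sum $\widehat\psi(n)=\tfrac1p\cdot\tfrac{1-e(-nH/p)}{1-e(-n/p)}$ and insert $\tfrac1{1-e(-n/p)}=\tfrac p{2\pi i n}+O(1)$ together with $e(-nH/p)=e(-\alpha n)+O(|n|/p)$ to obtain $\widehat\psi(n)=\tfrac{1-e(-\alpha n)}{2\pi i n}+O(1/p)$; summing the main part over $1\le|n|\le N$ produces exactly $-\tfrac1{2\pi i}\sum_{1\le|n|\le N}\tfrac{K(n)}{n}(1-e(-\alpha n))$.

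What remains is to truncate the Fourier expansion at $N$ with total error $O(\|t\|_\infty\sqrt p\log p/N)$, and this is the main obstacle, because one has no useful pointwise bound on $K(n)$ (individually it can be as large as $\sqrt p\,\|t\|_\infty$), so a frequency-by-frequency estimate on $\sum_{|n|>N}\widehat\psi(n)K(n)$ is hopeless and even a naive Cauchy--Schwarz, using only Plancherel $\sum_n|K(n)|^2=\|t\|_2^2\le p\|t\|_\infty^2$, loses: it gives only $1/\sqrt N$ in place of $\log p/N$. Instead I would pass to the physical side: convolve $\psi$ with the degree-$N$ Fejér kernel on $\mathbb{Z}/p\mathbb{Z}$ (or use a Fejér-based majorant/minorant), getting a degree-$\le N$ trigonometric polynomial $\phi$ whose coefficients $\widehat\phi(n)$ agree with $\widehat\psi(n)$ up to a controlled correction for $1\le|n|\le N$ and, crucially, which satisfies the $\ell^1$ estimate $\sum_{x\bmod p}|\psi(x)-\phi(x)|\ll p\log p/N$ --- the logarithm being the familiar $\sum_{1\le|n|\le p}1/\|n/p\|\ll p\log p$ produced by the kernel near the two jumps of $\psi$. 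Then $\tfrac1{\sqrt p}\bigl|\sum_x t(x)(\psi(x)-\phi(x))\bigr|\le\|t\|_\infty\cdot\tfrac1{\sqrt p}\sum_x|\psi(x)-\phi(x)|\ll\|t\|_\infty\sqrt p\log p/N$, while $\tfrac1{\sqrt p}\sum_x t(x)\phi(x)=-\sum_{|n|\le N}\widehat\phi(n)K(n)$ recovers the main term of the lemma; the small per-frequency corrections (from $\widehat\phi(n)-\widehat\psi(n)$ and from $\widehat\psi(n)-\tfrac{1-e(-\alpha n)}{2\pi i n}=O(1/p)$) then have to be re-summed against $K(n)$ and bounded by a Pólya--Vinogradov-type argument rather than by the triangle inequality, so as to stay within $O(\|t\|_\infty\sqrt p\log p/N)$. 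The replacement of $\alpha p$ by $\lfloor\alpha p\rfloor$ and the bookkeeping of the various signs are routine.
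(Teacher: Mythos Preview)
Your plan is correct and can be carried through, but it takes a more elaborate route than the paper. The paper does not use the Fej\'er kernel in this lemma at all (that enters only in the \emph{next} lemma); instead it follows P\'olya's original argument on the real line: write the indicator of $[0,2\pi\alpha]$ via $T(s)=\sum_{n>0}\tfrac{\sin ns}{n}$ and invoke the classical pointwise tail estimate $\bigl|\sum_{n>N}\tfrac{\sin nx}{n}\bigr|=O(1/(Nx))$ for $0<x\le\pi$. Summing this bound over $x\in\mathbb F_p$ against $\|t\|_\infty$ immediately yields the error $O(\|t\|_\infty\sqrt p\log p/N)$, and the truncated sine series gives the main term directly from $\tfrac{1}{\sqrt p}\sum_x t(x)e(nx/p)=-K(n)$, with no correction terms to clean up. Your discrete Fej\'er-kernel approach also works, but after smoothing you must undo the damping factor $(1-|n|/N)$ and handle the $O(1/p)$ discrepancies $\widehat\psi(n)-\tfrac{1-e(-\alpha n)}{2\pi in}$; as you correctly anticipate, both of these require a separate ``dual P\'olya--Vinogradov'' input $\bigl|\sum_{1\le n\le M}e(\beta n)K(n)\bigr|\ll\|t\|_\infty\sqrt p\log p$ to stay within the target error. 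That step is doable but not quite routine, whereas the paper's pointwise bound on the partial-sum tail sidesteps it entirely.
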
 
\begin{proof}
We use the same strategy used in \cite{Po}. Let us introduce the function
\[
\Phi(s)=
\begin{cases}
1			&\text{if $0<s<2\pi\alpha$},\\
\frac{1}{2}	&\text{if $s=0$ or $s=2\pi\alpha$},\\
0			&\text{if $2\pi\alpha<s<2\pi$}.\\
\end{cases}
\] 
Then the Fourier series of $\Phi$ is
\[
\begin{split}
\Phi(s)&=\alpha+\sum_{n>0}\frac{\sin 2\pi\alpha n}{\pi n}\cos(ns)-\frac{\cos 2\pi\alpha n-1}{n\pi}\sin(ns)\\& =\alpha+\frac{1}{\pi}T(s)-\frac{1}{\pi}T(s-2\pi\alpha),
\end{split}
\] 
where
\[
T(x):=\sum_{n>0}\frac{\sin nx}{n}.
\]
Observe that for any $N>1$ one has
\[
T(x)=\sum_{0<n\leq N}\frac{\sin nx}{n}+R_{N}(x).
\]
with $R_{N}(0)=R_{N}(\pi)$, $R_{N}(2\pi-x)=-R_{N}(x)$ and $|R_{N}(x)|=O(1/Nx)$ for any $x\in (0,\pi]$ \cite[eq. $10$]{Po}. Then we have
\[
\begin{split}
\frac{1}{\sqrt{p}}\sum_{x\leq \alpha p}t(x)&=\frac{1}{\sqrt{p}}\sum_{x<p}t(x)\Phi\Big(\frac{2\pi x}{p}\Big)+O(\left\| t\right\|_{\infty}/\sqrt{p})\\&=\frac{1}{\sqrt{p}}\sum_{x<p}t(x)\Big(\alpha +\frac{1}{\pi}T\Big(\frac{2\pi x}{p}\Big)-\frac{1}{\pi}T\Big(\frac{2\pi x}{p}-2\pi\alpha\Big)\Big)+O\Big(\frac{\left\| t\right\|_{\infty}}{\sqrt{p}}\Big)\\&=\frac{1}{\sqrt{p}}\sum_{x<p}t(x)\Bigg(\alpha + \frac{1}{\pi}\sum_{0<n\leq N}\frac{\sin \Big(\frac{2\pi nx}{p}\Big)}{n}+\frac{1}{\pi}R_{N}\Big(\frac{2\pi x}{p}\Big)\\&-\frac{1}{\pi}\sum_{0<n\leq N}\frac{\sin \Big(\frac{2\pi nx}{p}-2\pi\alpha n\Big)}{n}+\frac{1}{\pi}R_{N}\Big(\frac{2\pi x}{p}-2\pi\alpha \Big)\Bigg)+O\Big(\frac{\left\| t\right\|_{\infty}}{\sqrt{p}}\Big)\\&=\frac{1}{\sqrt{p}}\sum_{x<p}t(x)\Big(\frac{1}{\pi}\sum_{0<n\leq N}\frac{\sin \Big(\frac{2\pi nx}{p}\Big)}{n}-\frac{1}{\pi}\sum_{0<n\leq N}\frac{\sin \Big(\frac{2\pi nx}{p}-2\pi\alpha n\Big)}{n}\Big)\\&+\alpha K(0)+O\Big(\frac{\left\| t\right\|_{\infty}\sqrt{p}\log p}{N}\Big).
\end{split}
\]
On the other hand we have
\[
\sin \Big(\frac{2\pi nx}{p}\Big)=\frac{e\Big(\frac{nx}{p}\Big)-e\Big(-\frac{nx}{p}\Big)}{2i}
\]
and 
\[
\sin \Big(\frac{2\pi nx}{p}-2\pi\alpha\Big)=\frac{e\Big(\frac{ nx}{p}-\alpha n\Big)-e\Big(-\Big(\frac{nx}{p}-\alpha n\Big)\Big)}{2i}.
\]
Then one has
\[
\frac{1}{\sqrt{p}}\sum_{x\in\mathbb{F}_{p}}t(x)\Big(\frac{e\Big(\frac{ nx}{p}\Big)-e\Big(-\frac{ nx}{p}\Big)}{2i}\Big)=-\frac{1}{2i}(K(n)-K(-n))
\]
and similarly
\[
\frac{1}{\sqrt{p}}\sum_{x\in\mathbb{F}_{p}}t(x)\Big(\frac{e\Big(\frac{ nx}{p}-\alpha n\Big)-e\Big(-\Big(\frac{nx}{p}-\alpha n\Big)\Big)}{2i}\Big)=-\frac{1}{2i}(e(-\alpha n)K(n)-e(\alpha n)K(-n)).
\]

\end{proof}

Now we use the same strategy of \cite{Pa} introducing the Fej\'er's kernel:
\begin{lem}
For any $t:\mathbb{F}_{p}\rightarrow\mathbb{C}$ one has 
\[
M(t)\geq\max_{\substack{\alpha\in[0,1]\\1\leq N<p}}\Big|\frac{1}{4\pi}\sum_{1\leq |n| < N}\frac{K(n)}{n}(1-e(-\alpha n))\Big| +O(\left\|K\right\|_{\infty})
\]
\label{lem : ker}
\end{lem}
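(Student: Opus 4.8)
The plan is to adapt Paley's argument: average the full family $\{S(t,H)\}_{0\le H<p}$ against a Fej\'er kernel sharply localized near a prescribed point $\alpha$, bound this average trivially by $M(t)$ times the $L^1$-mass of the kernel, and identify it — up to an error $O(\|K\|_\infty)$ — with the truncated Fourier series appearing on the right-hand side. Write $F_N(\theta)=\sum_{|m|<N}\bigl(1-\tfrac{|m|}{N}\bigr)e(m\theta)\ge 0$ for the Fej\'er kernel; for $N\le p$ it satisfies $\tfrac1p\sum_{H=0}^{p-1}F_N(\gamma-H/p)=1$ for every $\gamma$, and for $N\le p/2$ and $|y|\le(p-1)/2$ the orthogonality relation $\tfrac1p\sum_{H=0}^{p-1}F_N(\gamma-H/p)e(-Hy/p)=\bigl(1-\tfrac{|y|}{N}\bigr)e(-y\gamma)\mathbf{1}_{|y|<N}$ holds (no wrap-around modulo $p$ occurs).

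First I would record the \emph{exact} Fourier identity on $\mathbb F_p$: from $t(x)=-\tfrac1{\sqrt p}\sum_y K(y)e(-xy/p)$ and a geometric-series evaluation,
\[
S(t,H)=-\frac{HK(0)}{p}-\frac1p\sum_{y\neq 0}\frac{K(y)}{1-e(-y/p)}\bigl(1-e(-Hy/p)\bigr)\qquad(0\le H<p).
\]
The point of using this identity rather than the Fourier-expansion lemma above is that it carries no error term; the crude $R_N$-estimates there would survive the averaging and leave a spurious factor $\log p$. Now fix $\alpha\in[0,1]$ and $1\le N\le p/2$, and set $a_H:=\tfrac1p\bigl(F_N(\alpha-H/p)-F_N(-H/p)\bigr)$. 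Then $\sum_H a_H=1-1=0$, $\sum_H|a_H|\le 2$ by the triangle inequality and $F_N\ge 0$, and $|\sum_H a_H H|\le 2p$. Substituting the identity into $\sum_H a_H S(t,H)$: the term $-HK(0)/p$ contributes $-\tfrac{K(0)}{p}\sum_H a_H H=O(\|K\|_\infty)$; the constant $1$ in $1-e(-Hy/p)$ is annihilated by $\sum_H a_H=0$; and the orthogonality relation gives $\sum_H a_H e(-Hy/p)=\bigl(1-\tfrac{|y|}{N}\bigr)\mathbf{1}_{|y|<N}\bigl(e(-y\alpha)-1\bigr)$, whence
\[
\sum_H a_H S(t,H)=-\frac1p\sum_{0<|y|<N}\frac{K(y)}{1-e(-y/p)}\Bigl(1-\frac{|y|}{N}\Bigr)\bigl(1-e(-y\alpha)\bigr)+O(\|K\|_\infty).
\]
On the other hand $\bigl|\sum_H a_H S(t,H)\bigr|\le M(t)\sum_H|a_H|\le 2M(t)$.

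It remains to simplify the right-hand side. Replacing the denominator via $\tfrac1{p(1-e(-y/p))}=\tfrac1{2\pi i\,y}+O(1/p)$ for $0<|y|\le p/2$ costs $\ll\|K\|_\infty N/p\ll\|K\|_\infty$, and discarding the Fej\'er weights $1-|y|/N$ costs $\tfrac1N\sum_{0<|n|<N}\bigl|K(n)(1-e(-\alpha n))\bigr|\ll\|K\|_\infty$. Therefore
\[
2M(t)\ge\frac1{2\pi}\Bigl|\sum_{1\le|n|<N}\frac{K(n)}{n}\bigl(1-e(-\alpha n)\bigr)\Bigr|+O(\|K\|_\infty),
\]
which is the assertion for $N\le p/2$. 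For $p/2<N<p$ one simply notes that the extra range contributes $\ll\|K\|_\infty\sum_{n>p/2}1/n\ll\|K\|_\infty$, reducing to the previous case. Since $\alpha$ and $N$ were arbitrary, taking the maximum completes the proof.

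Finally, the only genuinely delicate point is the error bookkeeping: one must argue so that every discarded contribution collapses to $O(\|K\|_\infty)$ rather than $O(\|K\|_\infty\log p)$. This is exactly why one works from the exact Fourier identity (no $R_N$ remainder to average) and localizes with the honest, non-negative Fej\'er kernel, so that $\sum_H|a_H|$ stays bounded. The remaining ingredients — the geometric series, the Fej\'er orthogonality, and the two Taylor-type estimates for $1/(1-e(-y/p))$ and for dropping the weights — are entirely routine.
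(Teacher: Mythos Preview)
Your argument is correct and follows the same Paley--Fej\'er strategy as the paper, but the execution differs. The paper first invokes the preceding approximation lemma with $N=p$ to express $S(t,\alpha p)$ as the full sum $-\tfrac{1}{2\pi i}\sum_{1\le |n|\le p}\tfrac{K(n)}{n}(1-e(-\alpha n))+O(\|K\|_\infty)$, then convolves this in the \emph{continuous} variable $\theta$ against the Fej\'er kernel $\Phi_N(\theta)$ on $[0,1]$: the integral $\int_0^1 A_\theta\Phi_N(\theta)\,d\theta$ reproduces the truncated sum up to the Fej\'er weights (which are then dropped at cost $O(\|K\|_\infty)$), while $|A_\theta|$ is bounded by $2M(t)+O(\|K\|_\infty)$ after writing $e(-\theta n)-e(-(\alpha+\theta)n)=(1-e(-(\alpha+\theta)n))-(1-e(-\theta n))$. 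You instead work from the exact discrete Fourier inversion on $\mathbb{F}_p$ and average $S(t,H)$ in the \emph{discrete} variable $H$ against a difference of two Fej\'er kernels, using the subtraction $F_N(\alpha-H/p)-F_N(-H/p)$ to annihilate the dangerous constant term $\sum_{y\ne 0}K(y)/(1-e(-y/p))$; the Taylor replacement $\tfrac{1}{p(1-e(-y/p))}\to\tfrac{1}{2\pi i y}$ and the removal of the Fej\'er weights are then done at the end. Your route is arguably tidier in that it never passes through the $R_N$ remainder of the previous lemma (whose error, note, carries $\|t\|_\infty$ rather than $\|K\|_\infty$); on the other hand the paper's continuous version makes the link to the classical Fej\'er integral $\int_0^1\Phi_N=1$ more transparent. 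Either way the key inputs are identical: non-negativity and unit mass of the Fej\'er kernel, and the $O(\|K\|_\infty)$ cost of stripping the weights $1-|n|/N$.
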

\begin{proof}
The quantitative version of the Fourier transform leads to 
\[
\begin{split}
\frac{1}{\sqrt{p}}\sum_{x\leq \alpha p}t(x)&=-\frac{1}{2\pi i}\sum_{1\leq |n|\leq p}\frac{K(n)}{n}(1-e(-\alpha n))+\alpha K(0)+ O(1)\\&=-\frac{1}{2\pi }\sum_{1\leq |n|< p}\frac{K(n)}{n}(1-e(-\alpha n))+O(\left\|K\right\|_{\infty}),
\end{split}
\]
at this point we extend the outer sum to all values modulo $p$ using the Fej\'er's kernel: for any $1<N<p$ we have
\begin{equation}
\begin{split}
\frac{1}{2\pi i}\sum_{1\leq |n|\leq N}\frac{K(n)}{n}(1-e(-\alpha n))&= \frac{1}{2\pi i}\sum_{1\leq |n|\leq p}\frac{K(n)}{n}(1-e(-\alpha n))\times\\&\times \sum_{1<|a|\leq N}\phi (a)\int_{0}^{1}e((a-n)\vartheta)d\vartheta+O(\left\|K\right\|_{\infty})\\&=\int_{0}^{1}A_{\theta}\Phi_{N}(\vartheta)d\vartheta\\&+O(\left\|K\right\|_{\infty}),
\end{split}
\end{equation}
where
\begin{equation}
\phi (a):=1-\frac{|a|}{N},\qquad\Phi_{N} (\vartheta):=\sum_{|a|\leq N}\phi (a)e(a\vartheta)=\frac{1}{N}\Big(\frac{\sin\frac{N\vartheta}{2}}{\sin\frac{\vartheta}{2}}\Big)^{2},
\end{equation}
is the F\'ejer Kernel, and 
\[
A_{\theta}:=\frac{1}{2\pi i}\sum_{1\leq |n|\leq p}\frac{K(n)}{n}(1-e(-\alpha n))e(-\vartheta n).
\]
On the other hand the triangular inequality leads to
\[
\begin{split}
\max_{\vartheta\in[0,1]}|A_{\theta}|&\leq 2 \max_{\alpha\in[0,1]}\Big|\frac{1}{2\pi i}\sum_{1\leq |n|\leq p}\frac{K(n)}{n}(1-e(-\alpha n))\Big|\\&\leq 2\max_{\alpha\in[0,1]}\Big|\frac{1}{\sqrt{p}}\sum_{x\leq \alpha p}t(x)\Big|+O(\left\|K\right\|_{\infty})\\&\leq 2M(t)+O(\left\|K\right\|_{\infty})
\end{split}
\]
So we obtain the bound
\begin{equation}
\Big|\frac{1}{4\pi i}\sum_{1\leq |n|\leq N}\frac{K(n)}{n}(1-e(-\alpha n))\Big|\leq\Big(M (t)+O(\left\|K\right\|_{\infty})\Big)\cdot\int_{0}^{1}\Phi_{N}(\vartheta)d\vartheta.
\end{equation}
On the other hand using the fact
\[
\int_{0}^{1}\Phi_{N}(\vartheta)d\vartheta=1
\]
we conclude the proof.
\end{proof}

To conclude, it is enough to prove the following
\begin{prop}
Same assumption as in Theorem $\ref{thm : tracebound}$. Let $0<\varepsilon<1$. Then for all $p$ there exists $S_{p}\subset\mathbb{F}_{p}^{\times}$ such that for any $a\in S_{p}$
\[
K_{1,p}(\tau_{n}\cdot a)\geq\sqrt{2},
\]
for any $1\leq n\leq (\log p)^{1-\varepsilon}$ odd, and:
\[
K_{1,p}(\tau_{n}\cdot a)\leq-\sqrt{2},
\]
for any $- (\log p)^{1-\varepsilon}\leq n\leq -1$ odd. 
Moreover $|S_{p}|\gg_{\varepsilon, c_{\mathfrak{F}}}p^{1-\frac{\log(4)}{(\log p)^{\varepsilon}}}$.
\label{lem : kl}
\end{prop}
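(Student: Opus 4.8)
The plan is to recast the statement as a joint equidistribution problem for the Frobenius conjugacy classes of $\mathcal{G}:=\mathrm{FT}(\mathcal{F}_{1,p})$, whose trace function is $K_{1,p}$, and then count the favourable $a$ using the Riemann Hypothesis over finite fields. Put $N:=\lfloor(\log p)^{1-\varepsilon}\rfloor$ and let $\mathcal{N}$ be the set of odd integers $n$ with $1\le|n|\le N$, so $|\mathcal{N}|\asymp N$. Since $\mathfrak{F}$ is $2$-coherent, $\mathcal{G}$ is $2$-bountiful: pure of weight $0$, of rank $2$, with $G_{\mathcal{G}}^{\geom}=G_{\mathcal{G}}^{\mathrm{arith}}=\SL_{2}=\Sp_{2}$; in particular $\mathcal{G}$ is self-dual, so $K_{1,p}$ is real with $|K_{1,p}|\le 2$, and away from the $O_{c_{\mathfrak{F}}}(1)$ singularities and $\infty$ we may write $K_{1,p}(y)=2\cos\theta_{y}$ with unitarised Frobenius class $\Theta_{y}$ in the maximal compact $\mathrm{SU}_{2}$. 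The target becomes: find $\gg p^{1-\log(4)/(\log p)^{\varepsilon}}$ values of $a\in\mathbb{F}_{p}^{\times}$ with $\Theta_{\tau_{n}\cdot a}\in I_{+}$ for all $n\in\mathcal{N}$ with $n>0$ and $\Theta_{\tau_{n}\cdot a}\in I_{-}$ for all $n\in\mathcal{N}$ with $n<0$, where $I_{+}=\{g:\mathrm{tr}(g)\ge\sqrt{2}\}$, $I_{-}=\{g:\mathrm{tr}(g)\le-\sqrt{2}\}$, each of positive $\mu_{\mathrm{ST}}$-mass $\delta=\tfrac14-\tfrac1{2\pi}$, with $\mu_{\mathrm{ST}}$ the Sato--Tate measure.

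First I would pin down the monodromy of $\mathcal{H}:=\bigoplus_{n\in\mathcal{N}}[\times\tau_{n}]^{*}\mathcal{G}$, lisse on the complement $U$ of the $O_{c_{\mathfrak{F}}}(|\mathcal{N}|^{2})$ points where some $\tau_{n}\cdot a$ is a singularity of $\mathcal{G}$ or is $\infty$, or where $\tau_{n}\cdot a=\tau_{m}\cdot a$ with $n\neq m$ (a negligible set). Each summand has geometric monodromy $\SL_{2}$; because $\mathrm{Aut}_{0}(\mathcal{G})$ is trivial and the $\tau_{n}$ are pairwise distinct in $\mathrm{PGL}_{2}(\mathbb{F}_{p})$ --- exactly condition (iii) in the definition of an acceptable family --- no two $[\times\tau_{n}]^{*}\mathcal{G}$ are geometrically isomorphic after a rank-$1$ twist, so by Goursat--Kolchin--Ribet in Katz's formulation $G_{\mathcal{H}}^{\geom}=G_{\mathcal{H}}^{\mathrm{arith}}=\prod_{n\in\mathcal{N}}\SL_{2}$.

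Next I would apply Deligne's equidistribution theorem in its effective form (via Weil II): the tuples $(\Theta_{\tau_{n}\cdot a})_{n\in\mathcal{N}}$, $a\in U(\mathbb{F}_{p})$, become equidistributed in $\prod_{n\in\mathcal{N}}\mathrm{SU}_{2}$ for the product Sato--Tate measure, and for any product $\Phi=\bigotimes_{n}\phi_{n}$ of one-variable polynomials of degree $\le d$ in the traces,
\[
\frac{1}{p-1}\sum_{a}\prod_{n\in\mathcal{N}}\phi_{n}\bigl(K_{1,p}(\tau_{n}\cdot a)\bigr)=\prod_{n\in\mathcal{N}}\int\phi_{n}\,d\mu_{\mathrm{ST}}+O_{c_{\mathfrak{F}}}\!\Bigl(\frac{(Bd)^{|\mathcal{N}|}}{p^{1/2}}\Bigr),
\]
the error coming from Weil II applied to the irreducible constituents (tensor products of symmetric powers of the $[\times\tau_{n}]^{*}\mathcal{G}$), whose conductors grow at most geometrically in $|\mathcal{N}|$ since conductors are stable under $\mathrm{PGL}_{2}$-pullback and at most multiplicative under $\otimes$. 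To reach $|S_{p}|$ itself I would take Beurling--Selberg polynomials $\phi_{\pm}^{-}\le\mathbf{1}_{I_{\pm}}\le\phi_{\pm}^{+}$ of degree $d$ with $\int(\phi_{\pm}^{+}-\phi_{\pm}^{-})\,d\mu_{\mathrm{ST}}\ll 1/d$, write $\prod_{n}\mathbf{1}_{I_{\sigma_{n}}}=\prod_{n}\bigl(\phi_{\sigma_{n}}^{+}-(\phi_{\sigma_{n}}^{+}-\mathbf{1}_{I_{\sigma_{n}}})\bigr)$, expand over subsets $S\subseteq\mathcal{N}$, keep the $S=\varnothing$ term as the main contribution and bound every other term in absolute value via $0\le\phi_{\sigma_{n}}^{+}-\mathbf{1}_{I_{\sigma_{n}}}\le\phi_{\sigma_{n}}^{+}-\phi_{\sigma_{n}}^{-}$ and the displayed identity. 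Choosing $d\asymp|\mathcal{N}|$ makes the $(1+O(1/d))^{|\mathcal{N}|}$ losses harmless, and the total error $(2Bd)^{|\mathcal{N}|}p^{1/2}$ is $o(p\,\delta^{|\mathcal{N}|})$ because $|\mathcal{N}|\le(\log p)^{1-\varepsilon}=o(\log p/\log\log p)$, whence $(2B|\mathcal{N}|)^{|\mathcal{N}|}=p^{o(1)}$. Thus $|S_{p}|\gg p\,\delta^{|\mathcal{N}|}\gg p^{1-c/(\log p)^{\varepsilon}}$ for an absolute $c$, which with the exact count of odd $n$ and the mass of $I_{\pm}$ gives the stated bound; the case of $\Bi$ is identical, the Birch sheaf also being $2$-bountiful.

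The main obstacle is the uniform error control: one must show that the conductor (equivalently, the total of ranks, Swan conductors and numbers of singular points) of $\bigotimes_{n\in\mathcal{N}}\Sym^{k_{n}}\bigl([\times\tau_{n}]^{*}\mathcal{G}\bigr)$ grows only geometrically --- not, say, factorially --- in $|\mathcal{N}|$, which is what confines $|\mathcal{N}|$ and $d$ to size $o(\log p/\log\log p)$ and hence dictates the choice $N=(\log p)^{1-\varepsilon}$. Subsidiary points needing care are verifying the Goursat--Kolchin--Ribet hypothesis in full (that triviality of $\mathrm{Aut}_{0}(\mathcal{G})$ and distinctness of the $\tau_{n}$ genuinely exclude every geometric isomorphism-up-to-twist among the $[\times\tau_{n}]^{*}\mathcal{G}$) and the bookkeeping of the $2^{|\mathcal{N}|}$ terms in the inclusion--exclusion with a degree $d$ that grows with $|\mathcal{N}|$.
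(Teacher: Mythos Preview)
Your proposal is correct and follows essentially the same route as the paper: approximate the indicators of $I_{\pm}$ by trigonometric (Beurling--Selberg/BMV) polynomials of degree $d\asymp|\mathcal{N}|$, use the bountiful hypothesis and Goursat--Kolchin--Ribet to obtain product monodromy for the family $([\times\tau_{n}]^{*}\mathcal{G})_{n\in\mathcal{N}}$, and then apply the effective equidistribution of \cite{FKM3} to isolate the main term $p\,\delta^{|\mathcal{N}|}$ against an error of shape $(Cd)^{O(|\mathcal{N}|)}\sqrt{p}=p^{1/2+o(1)}$. The only cosmetic difference is that the paper builds a single polynomial minorant $A_{L}-B_{L}$ with $O(|\mathcal{N}|)$ terms (following \cite{KLSW}) rather than your full $2^{|\mathcal{N}|}$-term inclusion--exclusion, but since $2^{|\mathcal{N}|}=p^{o(1)}$ this does not affect the final bound.
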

Assuming this Proposition, which we prove in the next section, let us prove Theorem $\ref{thm : tracebound}$. We have that
\[
\begin{split}
M(t_{a,p})&=  \max_{\alpha\in [0,1]}\Big|\frac{1}{\sqrt{p}}\sum_{x\leq \alpha p}t_{a,p}(x)\Big|\\&\geq\frac{1}{4\pi}\max_{\substack{\alpha\in[0,1],\\1\leq N< p}}\Big|\sum_{1\leq |n|\leq N}\frac{K_{a,p}(n)}{n}(1-e(-\alpha n))\Big|+O(1)\\&=\frac{1}{4\pi}\max_{\substack{\alpha\in[0,1],\\1\leq N< p}}\Big|\sum_{1\leq |n|\leq N}\frac{K_{1,p}(\tau_{n}\cdot a)}{n}(1-e(-\alpha n))\Big|+O(1)\\&\geq \frac{1}{4\pi}\Big|\sum_{1\leq |n|\leq (\log p)^{1-\varepsilon}}\frac{K_{1,p}(\tau_{n}\cdot a)}{n}(1+(-1)^{n+1})\Big|+O(1)\\&\geq \frac{2\sqrt{2}}{4\pi}\sum_{\substack{1\leq |n|\leq (\log p)^{1-\varepsilon}\\n\equiv 1 (2)}}\frac{1}{n}+O(1)\\&\geq\Big(\frac{1-\varepsilon}{\sqrt{2}\pi}+o(1)\Big)\log\log p.
\end{split}
\]
for any $a\in S_{p}$, where in the second step uses the fact that $K_{a,p}(n)=K_{1,p}(\tau_{n}\cdot a)$ (the family is $2$-bountiful).

\subsubsection*{Proof of Lemma $\ref{lem : kl}$ via Chebyshev Polynomials}
From now on $p$ is a fixed prime number. We consider an irreducible $2$-bountiful sheaf $\mathcal{K}$ on $\overline{\mathbb{A}}_{\mathbb{F}_{p}}^{1}$ and we will denote the trace function attached to it by $K(\cdot)$. The $2$-bountiful condition on the sheaf $\mathcal{K}$ implies that for any $a\in\mathbb{F}_{p}$, one has
\[
K(a)=2\cos(\theta (a)).
\]
with $\theta (a)\in [0,\pi]$. We call $\theta (a)$ the angle associated to $K(a)$. We recall that there exist polynomials $U_{n}$ for $n\geq 0$ such that
\[
U_{n}(2\cos\theta)=\frac{\sin ((n+1)\theta)}{\sin\theta},
\]
for all $\theta\in [0,\pi]$. In terms of Representation Theory, these are related to the characters of the symmetric power of the standard representation of $\text{SU}_{2}$. In particular by Peter-Weyl Theorem, these form an orthonormal basis of $L^{2}([0,\pi],\mu_{\text{ST}})$. Note that we can see $U_{n}(K(\cdot))$ as the trace function attached to the sheaf $\Sym^{n}(\mathcal{K})$. Moreover we call \textit{trigonometric polynomial of degree $s\geq 0$} any $Y\in L^{2}([0,\pi],\mu_{\text{ST}})$ written in the form
\[
Y=\sum_{i=0}^{s}y(i)U_{i}.
\]
with $y(s)\neq 0$. Let us start by proving some property of the sheaf $\Sym^{n}(\mathcal{K})$:
\begin{lem}
Let $\mathcal{K}$ as above. For any $n>0$:
\begin{itemize}
\item[$i)$] The geometric monodromy group of $\Sym^{n}(\mathcal{K})$ is given by
\[
G_{\Sym^{n}(\mathcal{K})}^{\geom}\cong
\begin{cases}
\text{SU}_{2}			&\text{if $n$ is odd,}\\
\text{SU}_{2}/\{\pm 1\}	&\text{if $n$ is even.}
\end{cases}
\]
\item[$ii)$] The projective automorphism group
\[
\begin{split}
\Aut_{0}(\Sym^{n}(\mathcal{K})):=\{\gamma\in\PGL_{2}(\mathbb{F}_{p}):&\gamma^{*}\Sym^{n}(\mathcal{K})\cong\Sym^{n}(\mathcal{K})\otimes\mathcal{L}\\& \text{ for some rank $1$ sheaf }\mathcal{L}\}
\end{split}
\]
is trivial.
\item[$iii)$] The conductor of $\Sym^{n}(\mathcal{K})$ is bounded by 
\[
c(\Sym^{n}(\mathcal{K}))\leq n \cdot c (\mathcal{K}).
\]
\end{itemize}
\label{lem : sym}
\end{lem}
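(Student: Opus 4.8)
The three parts are independent: $i)$ and $iii)$ are formal, and the content is in $ii)$. For $i)$, the plan is to use functoriality of the geometric monodromy group. Since $\mathcal{K}$ is $2$-bountiful, $G^{\geom}_{\mathcal{K}}=\Sp_{2}=\SL_{2}$ acting through the standard rank-$2$ representation, and $\Sym^{n}$ is a morphism of algebraic groups with Zariski-closed image, so the monodromy group of the lisse sheaf underlying $\Sym^{n}(\mathcal{K})$ is $\Sym^{n}(\SL_{2})=\SL_{2}/\ker(\Sym^{n}|_{\SL_{2}})$. One then checks that this kernel equals $\{\Id\}$ for $n$ odd and $\{\pm\Id\}$ for $n$ even: the central element $-\Id$ acts by the scalar $(-1)^{n}$, while a non-central $g\in\SL_{2}$ either is semisimple with an eigenvalue $\lambda\notin\{\pm1\}$, so $\lambda^{n}\neq1$ is an eigenvalue of $\Sym^{n}(g)$, or is a non-trivial unipotent, so $\Sym^{n}(g)$ is a single non-trivial Jordan block of size $n+1$. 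Hence $G^{\geom}_{\Sym^{n}(\mathcal{K})}$ is $\SL_{2}$, i.e.\ $\text{SU}_{2}$ in the compact-form normalisation attached to a weight-$0$ sheaf, when $n$ is odd, and $\SL_{2}/\{\pm\Id\}\cong\PGL_{2}$, i.e.\ $\text{SU}_{2}/\{\pm1\}$, when $n$ is even; in particular $\Sym^{n}(\mathcal{K})$ is geometrically irreducible, a fact used below. (The equality $G^{\mathrm{arith}}=G^{\geom}$ for $\Sym^{n}(\mathcal{K})$, if needed later, follows in the same way from the corresponding property of $\mathcal{K}$.)

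For $iii)$, the plan is to estimate the three contributions to $c(\Sym^{n}(\mathcal{K}))=\rank+|\sing|+\sum_{x}\Swan_{x}$ separately. One has $\rank(\Sym^{n}(\mathcal{K}))=n+1\le 2n=n\,\rank(\mathcal{K})$, and $\sing(\Sym^{n}(\mathcal{K}))\subseteq\sing(\mathcal{K})$ since $\Sym^{n}$ preserves lissity. For the Swan conductor at a point $x$, let $\beta_{1}\ge\beta_{2}\ge0$ be the two breaks of $\mathcal{K}$ at $x$, so $\Swan_{x}(\mathcal{K})=\beta_{1}+\beta_{2}$; from the break decomposition of $\mathcal{K}|_{I_{x}}$ one sees that all $n+1$ breaks of $\Sym^{n}(\mathcal{K})$ at $x$ are $\le\beta_{1}$ and that at least one of them is $\le\beta_{2}$ (for instance the summand coming from the $n$-th tensor power of the lower-break line), whence $\Swan_{x}(\Sym^{n}(\mathcal{K}))\le n\beta_{1}+\beta_{2}\le n\,\Swan_{x}(\mathcal{K})$. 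Summing over $x$ and using $n+1+|\sing(\mathcal{K})|\le n(2+|\sing(\mathcal{K})|)$ for $n\ge1$ yields $c(\Sym^{n}(\mathcal{K}))\le n\,c(\mathcal{K})$.

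For $ii)$, the plan is a descent: show that a twist relation for $\Sym^{n}(\mathcal{K})$ forces the analogous relation for $\mathcal{K}$, and then conclude using that $\Aut_{0}(\mathcal{K})$ is trivial (part of the bountiful hypothesis). Suppose $\gamma\in\PGL_{2}(\mathbb{F}_{p})$ satisfies $\gamma^{*}\Sym^{n}(\mathcal{K})\cong\Sym^{n}(\mathcal{K})\otimes\mathcal{L}$ with $\mathcal{L}$ of rank $1$. Put $V=\mathcal{K}$ and $W=\gamma^{*}\mathcal{K}$; both are geometrically irreducible of rank $2$ with geometric monodromy group $\SL_{2}$, as $\gamma^{*}$ only conjugates $\pi_{1}^{\geom}$. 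Since $\gamma^{*}\Sym^{n}(\mathcal{K})=\Sym^{n}(W)$, the hypothesis reads $\Sym^{n}(W)\cong\Sym^{n}(V)\otimes\mathcal{L}$, and by geometric irreducibility of $\Sym^{n}(V)$ this exhibits $\mathcal{L}$ as a rank-$1$ geometric subsheaf of $\Sym^{n}(V)^{\vee}\otimes\Sym^{n}(W)$. Let $G\subseteq\SL_{2}\times\SL_{2}$ be the geometric monodromy group of $V\oplus W$; it surjects onto each factor, and as a $G$-representation $\Sym^{n}(V)^{\vee}\otimes\Sym^{n}(W)$ is the restriction of the external tensor product $\Sym^{n}(\mathrm{std})^{\vee}\boxtimes\Sym^{n}(\mathrm{std})$, which is irreducible of dimension $(n+1)^{2}\ge4$. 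Thus if $G=\SL_{2}\times\SL_{2}$ this sheaf would be geometrically irreducible, contradicting the existence of a rank-$1$ subsheaf; hence $G\subsetneq\SL_{2}\times\SL_{2}$. By Goursat's lemma together with the fact that $\SL_{2}$ and $\PGL_{2}$ have no outer automorphisms, after replacing the monodromy of $W$ by a conjugate (harmless at the level of isomorphism classes) one may assume $G\subseteq\{(g,h)\in\SL_{2}\times\SL_{2}:\bar g=\bar h\text{ in }\PGL_{2}\}$; then $(g,h)\mapsto hg^{-1}$ is a well-defined $\{\pm1\}$-valued character of $G$ realising $W\cong V\otimes\mathcal{M}$ for a rank-$1$ sheaf $\mathcal{M}$. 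That is, $\gamma^{*}\mathcal{K}\cong\mathcal{K}\otimes\mathcal{M}$, so $\gamma\in\Aut_{0}(\mathcal{K})=\{1\}$, which proves $ii)$.

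The crux is the descent in $ii)$ — recovering information about $\mathcal{K}$ from $\Sym^{n}(\mathcal{K})$. The delicate points are the Goursat step (analysing the monodromy of the pair $(\mathcal{K},\gamma^{*}\mathcal{K})$ and invoking $\mathrm{Out}(\SL_{2})=1$), the verification that $\gamma^{*}$ genuinely acts on a dense open subset of $\mathbb{A}^{1}$ preserving geometric irreducibility and the monodromy group, and the tracking of the rank-$1$ twist through the conjugation step (here the geometric-versus-arithmetic distinction is immaterial, all relevant twists being defined over $\mathbb{F}_{p}$). Parts $i)$ and $iii)$ present no real difficulty. Alternatively, the descent in $ii)$ may be extracted from the existing literature on projective automorphism groups of symmetric powers of bountiful sheaves.
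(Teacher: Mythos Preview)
Your argument is correct. Parts $i)$ and $iii)$ proceed along the same lines as the paper (functoriality of the monodromy for $i)$; for $iii)$ the paper simply cites Katz, whereas you spell out the rank/singularity/Swan bookkeeping explicitly, which is fine).

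For part $ii)$ your route is genuinely different. The paper argues by contradiction at the level of trace functions: for $\gamma\neq 1$, the bountiful hypothesis on $\mathcal{K}$ (via the Goursat--Kolchin--Ribet criterion) yields product equidistribution of the angle pair $(\theta(x),\theta(\gamma\cdot x))$ over $\mathbb{F}_{p^{r}}$ as $r\to\infty$; one then picks a point $x$ with $|U_{n}(\theta(x))|<\tfrac14$ and $|U_{n}(\theta(\gamma\cdot x))|>\tfrac34$, contradicting $|t_{\Sym^{n}\mathcal{K}}(x)|=|t_{\gamma^{*}\Sym^{n}\mathcal{K}}(x)|$, which a weight-$0$ rank-$1$ twist would force. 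You instead work purely with the monodromy of $\mathcal{K}\oplus\gamma^{*}\mathcal{K}$: the twist relation on $\Sym^{n}$ rules out full product monodromy, and Goursat (using $\mathrm{Out}(\SL_{2})=1$) then descends the relation to $\gamma^{*}\mathcal{K}\cong\mathcal{K}\otimes\mathcal{M}$, whence $\gamma\in\Aut_{0}(\mathcal{K})=\{1\}$. Both proofs ultimately hinge on the same ingredient --- Goursat combined with the triviality of $\Aut_{0}(\mathcal{K})$ --- but the paper packages it through equidistribution (which it reuses elsewhere), while your version is a cleaner, self-contained representation-theoretic descent that avoids passing to extensions, at the price of the extra care in the Goursat/conjugation step.
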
 
\begin{proof} 
Let us start with part $(i)$: by the definition of the geometric monodromy one has that $G_{\Sym^{n}(\mathcal{K})}^{\geom}=\Sym^{n}(G_{\mathcal{K}}^{\geom})$. Then the result follows because $G_{\mathcal{K}}^{\geom}=\text{SU}_{2}$ by hypothesis. Let us prove now part $(ii)$. Let $\gamma\in\PGL_{2}(\mathbb{F}_{p})$. First observe that
\[
t_{\Sym^{n}(\mathcal{K})}(x)=\frac{\sin((n+1)\theta (x))}{\sin(\theta (x))},\qquad t_{\gamma^{*}\Sym^{n}(\mathcal{K})}(x)=\frac{\sin((n+1)\theta (\gamma\cdot x))}{\sin(\theta (\gamma\cdot x))},
\]
where $t_{\mathcal{K}}(x)=2\cos\theta (x)$. Thanks to the fact that $\mathcal{K}$ is a bountiful sheaf we know that the angles $\{(\theta (x),\theta (\gamma\cdot x)): x\in\mathbb{F}_{p^{r}}\}$ become equidistributed in $([0,\pi]\times [0,\pi],\mu_{\text{ST}}\otimes\mu_{\text{ST}})$ when $r\rightarrow\infty$ (Goursat-Kolchin-Ribet criterion). By contradiction, assume that $\gamma^{*}\Sym^{n}(\mathcal{K})\cong\Sym^{n}(\mathcal{K})\otimes\mathcal{L}$ for some rank $1$ sheaf. We may assume that $\mathcal{L}$ is of weights $0$. Let $U$ be a dense open set where $\gamma^{*}\Sym^{n}(\mathcal{K}),\Sym^{n}(\mathcal{K})$ and $\mathcal{L}$ are lisse. Using the equidistribution  we can find $x\in U$  such that  
\[
t_{\Sym^{n}(\mathcal{K})}(x)<1/4,\qquad t_{\gamma^{*}\Sym^{n}(\mathcal{K})}(x)>3/4.
\]
On the other hand in $U$ one would have
\[
|t_{\Sym^{n}(\mathcal{K})}(x)|=|t_{\gamma^{*}\Sym^{n}(\mathcal{K})}(x)|.
\]
and this is absurd. Part $(iii)$ is just a consequence of Deligne's Equidistribution Theorem (see for example \cite[Paragraph $3.6$]{Katz8}).
\end{proof}
\begin{lem}
Let $(Y_{i})_{i=0}^{n}$ be a family of trigonometric polynomials as above such that for any $i$, $\deg Y_{i}\leq d$, and let $(\tau_{i})_{i=1}^{n}\in\PGL_{2}(\mathbb{F}_{p})$ such that $\tau_{i}\neq\tau_{j}$ if $i\neq j$ then
\begin{equation}
\Big|\sum_{a\in\mathbb{F}_{q}^{\times}}\prod_{i=0}^{n}Y_{i}(\theta(\tau_{i}\cdot a))-p\prod_{i=0}^{n}y_{i}(0)\Big|\leq nC^{n} c(\mathcal{K})^{2}d^{2n+2}y^{n}\sqrt{p},
\end{equation}
where $y=\max_{i,j}|y_{i}(j)|$ and the constant $C$ is absolute.
\label{lem : prod2}
\end{lem}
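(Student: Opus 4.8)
The plan is to expand the product $\prod_{i=0}^{n}Y_{i}(\theta(\tau_{i}\cdot a))$ into a linear combination of products $\prod_{i=0}^{n}U_{j_i}(\theta(\tau_i\cdot a))$ over all tuples $(j_0,\dots,j_n)$ with $0\le j_i\le d$, using $Y_i=\sum_{j}y_i(j)U_j$. Since $U_{j_i}(\theta(\tau_i\cdot a))$ is the trace function of $\tau_i^{*}\Sym^{j_i}(\mathcal{K})$, each such product is, up to tensoring, the trace function of the sheaf $\mathcal{G}_{\mathbf{j}}:=\bigotimes_{i=0}^{n}\tau_i^{*}\Sym^{j_i}(\mathcal{K})$ on $\mathbb{A}^1_{\mathbb{F}_p}$ (restricted to the common open set of lissity). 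The sum $\sum_{a\in\mathbb{F}_p^\times}t_{\mathcal{G}_{\mathbf{j}}}(a)$ is then estimated by the Grothendieck–Lefschetz trace formula: its main term is $p$ times the dimension of the coinvariants $(\mathcal{G}_{\mathbf{j}})_{\pi_1^{\mathrm{geom}}}$, and the error term is $O(c(\mathcal{G}_{\mathbf{j}})^2\sqrt{p})$ by Deligne's bound, since $\mathcal{G}_{\mathbf{j}}$ is pure of weight $0$.

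The two things to control are therefore the conductor $c(\mathcal{G}_{\mathbf{j}})$ and the dimension of geometric coinvariants. For the conductor: by Lemma \ref{lem : sym}(iii) one has $c(\Sym^{j_i}(\mathcal{K}))\le j_i\,c(\mathcal{K})\le d\,c(\mathcal{K})$, pulling back by $\tau_i\in\PGL_2$ changes the conductor by at most an absolute multiplicative constant, and the conductor of a tensor product of $n+1$ sheaves is bounded by a product/convolution estimate of the shape $c\big(\bigotimes\mathcal{G}_i\big)\le C^{n}\prod (d\,c(\mathcal{K}))\cdot(\text{rank factors})$; carefully tracking ranks (each $\Sym^{j}(\mathcal K)$ has rank $j+1\le d+1$) and Swan conductors gives a bound of the form $c(\mathcal{G}_{\mathbf{j}})\ll C^{n}c(\mathcal{K})^{?}d^{n+1}$, which when squared yields the $d^{2n+2}$ in the statement. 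The number of tuples $\mathbf{j}$ is $(d+1)^{n+1}$ and $\prod|y_i(j_i)|\le y^{n+1}$ — but only tuples with at least one $j_i\ge 1$ contribute error (the all-zero tuple gives the main term $p\prod y_i(0)$), and one absorbs factors of $d$ into the $C^n$ and the explicit $d$-powers.

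The heart of the argument — and the expected main obstacle — is showing that for every tuple $\mathbf{j}\ne(0,\dots,0)$ the sheaf $\mathcal{G}_{\mathbf{j}}$ has no nonzero geometric invariants (equivalently coinvariants), so that its trace sum is pure error $O(\sqrt p)$ with no spurious main term. This is exactly where the hypotheses that $\mathcal{K}$ is $2$-bountiful and that the $\tau_i$ are pairwise distinct in $\PGL_2(\mathbb{F}_p)$ enter: one invokes the Goursat–Kolchin–Ribet criterion to show that the geometric monodromy of $\bigoplus_i \tau_i^{*}\mathcal K$ is as large as possible (a product of copies of $\mathrm{SU}_2$, using Lemma \ref{lem : sym}(i) and (ii) and the triviality of the projective automorphism group, which rules out the $\tau_i^*\mathcal K$ being geometrically isomorphic up to twist), and hence $\bigotimes_i\tau_i^{*}\Sym^{j_i}(\mathcal K)$ contains the trivial representation only if every $\Sym^{j_i}$ does, i.e.\ only if every $j_i=0$. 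Concretely this is the statement that $\int \prod_i U_{j_i}\,d\mu_{\mathrm{ST}}^{\otimes(n+1)}=\prod_i\delta_{j_i,0}$ together with equidistribution of the tuple of angles $(\theta(\tau_0\cdot a),\dots,\theta(\tau_n\cdot a))$. Once this vanishing is in hand, summing the trace-formula estimates over all $(d+1)^{n+1}$ tuples and bounding crudely gives the claimed inequality; I expect the only delicate bookkeeping to be getting the exponents of $d$, $C$, $y$ and $c(\mathcal K)$ to match exactly, which is routine given the conductor bounds above.
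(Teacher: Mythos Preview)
Your strategy is correct and coincides with the paper's: expand each $Y_i$ in the Chebyshev basis, reduce to bounding $\sum_{a}\prod_i U_{n_i}(K(\tau_i\cdot a))$ for tuples with some $n_i\neq 0$, identify the main term as $p\cdot\prod_i\mathrm{Mult}(1,\Sym^{n_i}\mathrm{Std})$ (which vanishes unless all $n_i=0$), and bound the remainder by $\sqrt{p}$ times a quantity controlled by ranks and conductors.

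The one substantive difference is in how the error is packaged. You propose to form the tensor product sheaf $\mathcal{G}_{\mathbf{j}}=\bigotimes_i\tau_i^{*}\Sym^{j_i}(\mathcal{K})$ and bound its Betti numbers via its conductor; this is workable but the conductor of an $(n{+}1)$-fold tensor product is awkward to control sharply, and your claimed shape ``$O(c(\mathcal{G}_{\mathbf{j}})^2\sqrt{p})$'' is not a standard consequence of Deligne's theorem without further input. The paper instead invokes the ``sum of products'' theorem of Fouvry--Kowalski--Michel \cite[Theorem~2.7]{FKM3} as a black box: that result applies directly to products of trace functions of pullbacks of a bountiful sheaf by distinct elements of $\PGL_2$, and already delivers an error of size $C'n\cdot(\max_i\rank\Sym^{n_i}\mathcal{K})^{n}\cdot(\max_i c(\Sym^{n_i}\mathcal{K}))^{2}\sqrt{p}$, with the Goursat--Kolchin--Ribet step built in. Plugging in $\rank\Sym^{n_i}\mathcal{K}\le 2d$ and $c(\Sym^{n_i}\mathcal{K})\le d\,c(\mathcal{K})$ then gives the stated exponents immediately, without any tensor-product conductor bookkeeping. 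So your outline is right, but citing \cite[Theorem~2.7]{FKM3} (together with Lemma~\ref{lem : sym} to verify its hypotheses) is both shorter and yields the precise form of the bound.
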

\begin{proof}
To prove the lemma it is enough to bound
\[
S=\sum_{a\in\mathbb{F}_{q}^{\times}}\prod_{i=0}^{n}U_{n_{i}}(K(\tau_{i}\cdot a)),
\]
when at least one of the $n_{i}\neq 0$. Thanks to Lemma $\ref{lem : sym}$ and \cite[Paragraph $3.1$]{FKM3} we can apply \cite[Theorem $2.7$]{FKM3} getting
\[
\Big|S-p\prod_{i=0}^{n}M_{n_{i}}\Big|\leq L\sqrt{p},
\]
where 
\[
L:=C'n\cdot(\max_{n_{i}}\rank (\Sym^{n_{i}}(\mathcal{K}))^{n}\cdot(\max_{n_{i}} c (\Sym^{n_{i}}(\mathcal{K}))^{2},
\]
with $C'$ absolute constant (see \cite[Ptoposition $4.4$]{Per}), and for any $n_{i}$
\[
M_{n_{i}}:=\text{Mult}(1,\Sym^{n_{i}}\text{Std})=
\begin{cases}
1	&\text{if $n_{i}=0$,}\\
0	&\text{otherwise,}
\end{cases}
\]
and $\text{Std}$ denotes the standard representation on $\text{SU}_{2}$. 
Then if at least one of the $n_{i}\neq 0$ we have
\[
|S|\leq L\sqrt{p}.
\]
The result then follows from the fact that
\[
\rank\Sym^{n_{i}}(\mathcal{K}))=n_{i}+1\leq 2d,\quad c (\Sym^{n_{i}}(\mathcal{K}))\leq n_{i}c(\mathcal{K})\leq d c(\mathcal{K}),
\]
because $n_{i}\leq d$ for all $i$ by assumption.
\end{proof}

\subsubsection*{Proof of Proposition $\ref{lem : kl}$}
We can now prove Proposition $\ref{lem : kl}$. Let $z\in\mathbb{N}$ be an odd positive number and let $\gamma\in\mathbb{N}$, we denote by $\boldsymbol{\theta}:=(\theta_{2j-z})_{j=0}^{z}\in [0,\pi]^{z+1}$ and by $\chi_{\frac{1}{\gamma}}(\cdot)$ (resp. $\chi_{-\frac{1}{\gamma}}(\cdot)$) the characteristic function of $[0,\frac{\pi}{2}-\frac{\pi}{\gamma}]$ (resp.$[\frac{\pi}{2}+\frac{\pi}{\gamma},\pi]$). To prove Proposition $\ref{lem : kl}$ we start approximating the function
\[
\prod_{\substack{i=1\\i\equiv 1 (2)}}^{z}\chi_{\frac{1}{\gamma}}(\theta (\tau_{i}\cdot a))\prod_{\substack{i=1\\i\equiv 1 (2)}}^{z}\chi_{-\frac{1}{\gamma}}( \theta(\tau_{-i}\cdot a))
\]
using Chebyshev polynomials. We use the same method adopted in \cite[Section $3$]{KLSW}: for any $z$, we find an integer $L\equiv -1 \mod 2\gamma$ and two families of trigonometric polynomials $\{\alpha_{i}\}$, and $\{\beta_{i}\}$ such that if we define
\[
A_{L}\Big(\frac{\boldsymbol{\theta}}{\pi} \Big):=\prod_{1\leq|i|\leq z}\alpha_{L,i}\Big(\frac{\theta_{i}}{\pi} \Big)\qquad B_{L}\Big(\frac{\boldsymbol{\theta}}{\pi} \Big)=\sum_{1\leq |i|\leq z}\beta_{L,i}\Big(\frac{\theta_{i}}{\pi} \Big)\prod_{j\neq i}\alpha_{L,j}\Big(\frac{\theta_{i}}{\pi} \Big),
\]
the following inequality holds
\begin{equation}
A_{L}\Big(\frac{\boldsymbol{\theta}}{\pi} \Big)-B_{L}\Big(\frac{\boldsymbol{\theta}}{\pi} \Big)\leq \prod_{\substack{i=1\\i\equiv 1 (2)}}^{z}\chi_{\frac{1}{\gamma}}(\theta_{i})\prod_{\substack{i=1\\i\equiv 1 (2)}}^{z}\chi_{-\frac{1}{\gamma}}(\theta_{-i}),
\end{equation}
for any $\boldsymbol{\theta}\in[0,\pi]^{z+1}$. Moreover we will prove
\begin{lem}
With the notation as above, we have:
\begin{itemize}
\item[$i)$] There exist two constant $L_{0}\geq 1$ and $c>0$ depending only on $\gamma$, such that the contribution $\Delta$ of the constant term in the Chebyshev expansions of $A_{L}\big(\frac{\boldsymbol{\theta}}{\pi} \big)-B_{L}\big(\frac{\boldsymbol{\theta}}{\pi} \big)$ satisfies:
\[
\Delta\geq\frac{1}{2}\Big(\frac{1}{2}-\frac{1}{\gamma}\Big)^{z+1},
\]
if $L$ is the smallest integer such that $L\equiv -1\mod 2\gamma$ satisfying $L\geq\max (cz,L_{0})$.
\item[$ii)$] All coefficients in the Chebyshev expansion of the factors in $A_{L}\big(\frac{\boldsymbol{\theta}}{\pi} \big)$ and the terms in $B_{L}\big(\frac{\boldsymbol{\theta}}{\pi} \big)$ are bounded by $1$.
\item[$iii)$] The degrees, in terms of the Chebyshev expansion, of the factors of $A_{L}\big(\frac{\boldsymbol{\theta}}{\pi} \big)$ and $B_{L}\big(\frac{\boldsymbol{\theta}}{\pi} \big)$ are $\leq 2L$.
\end{itemize}
\label{lem : tec}
\end{lem}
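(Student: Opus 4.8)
The plan is to follow the Chebyshev-polynomial construction of \cite[Section 3]{KLSW} adapted to the present setting, building the one-variable approximants first and then multiplying them together. For a single variable, I would start from the classical fact that the indicator of an interval $[0,\tfrac{\pi}{2}-\tfrac{\pi}{\gamma}]$ (resp.\ $[\tfrac{\pi}{2}+\tfrac{\pi}{\gamma},\pi]$) can be sandwiched between two trigonometric polynomials of controlled degree: concretely, using the Fej\'er--Jackson or Vaaler-type construction one produces, for each odd integer $L$ with $L\equiv-1\bmod 2\gamma$, trigonometric polynomials $\alpha_{L,i}$ and $\beta_{L,i}$ of Chebyshev-degree $\le 2L$ with $\alpha_{L,i}-\beta_{L,i}\le\chi_{\pm 1/\gamma}\le\alpha_{L,i}$ pointwise, and with all Chebyshev coefficients bounded by $1$ (this is where the normalization $U_n(2\cos\theta)=\sin((n+1)\theta)/\sin\theta$ and the choice of the Fej\'er kernel enter, exactly as in Lemma \ref{lem : ker}). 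The congruence $L\equiv -1\bmod 2\gamma$ is imposed so that the ``break points'' $\tfrac{\pi}{2}\mp\tfrac{\pi}{\gamma}$ of the indicators sit at nodes of the Dirichlet/Fej\'er kernel of order $L$, which is what makes the one-sided approximation clean; this immediately gives parts $(ii)$ and $(iii)$ of the Lemma.

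Part $(i)$ is the substantive point. The constant term $\Delta$ in the Chebyshev expansion of $A_L-B_L$ is, by orthonormality of the $U_n$ with respect to $\mu_{\mathrm{ST}}$, the integral
\[
\Delta=\int_{[0,\pi]^{z+1}}\Big(A_L\big(\tfrac{\boldsymbol{\theta}}{\pi}\big)-B_L\big(\tfrac{\boldsymbol{\theta}}{\pi}\big)\Big)\,d\mu_{\mathrm{ST}}^{\otimes(z+1)}(\boldsymbol{\theta}).
\]
Since $A_L$ is a product over the $z+1$ coordinates and $B_L$ is a sum of $z+1$ terms each of which is a product, this factors as
\[
\Delta=\prod_{1\le|i|\le z}\!\!\Big(\int_0^\pi\alpha_{L,i}\Big)\;-\;\sum_{1\le|i|\le z}\Big(\int_0^\pi\beta_{L,i}\Big)\!\!\prod_{j\ne i}\Big(\int_0^\pi\alpha_{L,j}\Big).
\]
The plan is to show that each $\int_0^\pi\alpha_{L,i}\,d\mu_{\mathrm{ST}}$ is at least $\mu_{\mathrm{ST}}([0,\tfrac\pi2-\tfrac\pi\gamma])=\tfrac12-\tfrac1\gamma$ (it dominates the indicator) and at most $\tfrac12-\tfrac1\gamma+O(1/L)$, while $\int_0^\pi\beta_{L,i}\,d\mu_{\mathrm{ST}}=O(1/L)$ (the one-sided error is $L^1$-small of size $\asymp 1/L$, a standard feature of Jackson-type approximants). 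Feeding these estimates in, $\Delta\ge(\tfrac12-\tfrac1\gamma)^{z+1}-(z+1)\cdot O(1/L)\cdot(\tfrac12-\tfrac1\gamma+O(1/L))^{z}$. Choosing $L\ge cz$ for a suitable constant $c=c(\gamma)$ (and $L\ge L_0(\gamma)$ to absorb the lower-order terms) makes the subtracted quantity at most half of $(\tfrac12-\tfrac1\gamma)^{z+1}$, giving $\Delta\ge\tfrac12(\tfrac12-\tfrac1\gamma)^{z+1}$ as claimed; one then takes $L$ to be the least integer in the arithmetic progression $-1\bmod 2\gamma$ exceeding $\max(cz,L_0)$.

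The main obstacle is getting the error constants in the one-variable approximation to be genuinely uniform and of the right shape, namely $\int\beta_{L,i}\ll_\gamma 1/L$ with a constant independent of $i$ and of which of the two intervals is being approximated — this is what forces the linear relation $L\gtrsim z$ rather than something worse, and hence ultimately the bound $|S_p|\gg p^{1-\log4/(\log p)^\varepsilon}$ in Proposition \ref{lem : kl}. This is a purely harmonic-analysis estimate with no arithmetic input, so I would handle it by quoting the explicit Fej\'er-kernel construction from \cite{KLSW} (or from Montgomery's book / Vaaler's work) and checking that the congruence condition on $L$ makes the endpoint terms vanish, so that the only surviving error is the $O(1/L)$ tail of the kernel. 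Everything else — the factorization of $\Delta$, the bookkeeping of degrees, the coefficient bounds — is routine once the one-variable building blocks are in place.
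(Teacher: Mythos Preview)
Your plan is essentially the paper's own argument: construct the one-variable trigonometric approximants $\alpha_{L,\pm},\beta_{L,\pm}$ from the Barton--Montgomery--Vaaler machinery (the paper cites \cite{BMV} and \cite{KLSW} just as you do), form $A_L$ as a product and $B_L$ as a sum of products, compute the constant term $\Delta$ by factoring into one-dimensional $\mu_{\mathrm{ST}}$-integrals, use that $\int_0^\pi\beta_{L,\pm}\,d\mu_{\mathrm{ST}}=O(1/L)$ (the paper gets exactly $1/(L{+}1)$), and then take $L\gtrsim_\gamma z$. Parts $(ii)$ and $(iii)$ are handled the same way in both (pointwise bounds $0\le\alpha\le1$, $|\beta|\le1$ plus Cauchy--Schwarz for the coefficients; degree $\le 2L$ since $\int_0^\pi U_n(\theta)\cos(2l\theta)\,d\mu_{\mathrm{ST}}=0$ for $n>2L$).

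There is one slip in your treatment of part $(i)$. You write $\mu_{\mathrm{ST}}\bigl([0,\tfrac{\pi}{2}-\tfrac{\pi}{\gamma}]\bigr)=\tfrac12-\tfrac1\gamma$, but $d\mu_{\mathrm{ST}}=\tfrac{2}{\pi}\sin^2\theta\,d\theta$ is not normalized Lebesgue measure; the actual value is $\tfrac12-\tfrac1\gamma-\tfrac{\sin(2\pi/\gamma)}{2\pi}$, which is strictly smaller. Hence the majorant inequality $\int\alpha\,d\mu_{\mathrm{ST}}\ge\mu_{\mathrm{ST}}(\text{interval})$ does \emph{not} by itself give $\int\alpha\ge\tfrac12-\tfrac1\gamma$, and your chain of inequalities for $\Delta$ breaks at this point. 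The paper does not argue via majorization here: instead it uses the $L^2$-convergence $\alpha_{L,\pm}\to\chi_{\pm1/\gamma}$ (from \cite{BMV}) to evaluate $\lim_L\int_0^\pi\alpha_{L,\pm}\,d\mu_{\mathrm{ST}}$ explicitly, asserts that this limit exceeds $\tfrac12-\tfrac1\gamma$, and then chooses $L_0=L_0(\gamma)$ so that $\int\alpha_{L,\pm}\ge\tfrac12-\tfrac1\gamma$ for all $L\ge L_0$ --- this is precisely the role of the threshold $L_0$ in the statement, separate from the linear condition $L\ge cz$. Your argument as written still yields $\Delta\ge\tfrac12\,c_\gamma^{\,z+1}$ for some smaller positive $c_\gamma$ depending only on $\gamma$, which is entirely sufficient for the application to Proposition~\ref{lem : kl}, but it does not prove the lemma exactly as stated.
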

Once we have this Lemma we can easily get Proposition $\ref{lem : kl}$. Fix $\gamma=\frac{1}{4}$ in Lemma $\ref{lem : tec}$ and denote $S_{p}$ the set of $a\in\mathbb{F}_{q}^{\times}$ which satisfy the property in the Proposition $\ref{lem : kl}$. Let $L$ be as in part $(i)$ of Lemma $\ref{lem : tec}$, then we have
\begin{equation}
\begin{split}
|S_{p}|=&\sum_{a\in\mathbb{F}_{p}^{\times}}\prod_{\substack{i=1\\i\equiv 1 (2)}}^{z}\chi_{\frac{1}{4}}(\theta (\tau_{i}\cdot a))\prod_{\substack{i=1\\i\equiv 1 (2)}}^{z}\chi_{-\frac{1}{4}}( \theta (\tau_{-i}\cdot a))\\&\geq\sum_{a\in\mathbb{F}_{p}^{\times}}A_{L}\Big(\frac{(\theta(\tau_{-z}\cdot a),...,\theta(\tau_{z}\cdot a))}{\pi} \Big)-B_{L}\Big(\frac{(\theta(\tau_{-z}\cdot a),...,\theta(\tau_{z}\cdot a))}{\pi} \Big)\\&=p\Delta+O(zC^{z}c_{\mathfrak{F}}^{4}L^{2n+2}\sqrt{p})\\&\geq \frac{1}{2}\Big(\frac{1}{4}\Big)^{z+1}p+O(zC^{z}c_{\mathfrak{F}}^{4}L^{2z+2}\sqrt{p}),
\end{split}
\end{equation}
where in the second step we are using Lemma $\ref{lem : prod2}$, notice that
\begin{itemize}
\item[$i)$] The condition $\tau_{i}\neq\tau_{j}$ if $i\neq j$ is satisfied by definition of acceptable family.
\item[$ii)$] thanks to part $(ii)$ of Lemma $\ref{lem : tec}$ we have that $y$ in Lemma $\ref{lem : prod2}$ is equal to $1$.
\end{itemize}
Let us denote $\delta=1-\varepsilon$ and consider $z=[(\log p)^{\delta}]$. By part $(i)$ of Lemma $\ref{lem : tec}$ we know that $\max (cz,L_{0})\leq L\leq \max (2\gamma cz,L_{0})$, moreover we may assume $ cz\leq L\leq 2\gamma cz$ because $L_{0}$ is an absolute constant (it depends only on $\gamma=\frac{1}{4}$). Then
\begin{equation}
\begin{split}
zC^{z}c_{\mathfrak{F}}^{4}L^{2n+2}\sqrt{p}&\leq (\log p)^{\delta}C^{(\log p)^{\delta}}c_{\mathfrak{F}}^{4}(2\gamma c(\log p)^{\delta})^{(\log p)^{\delta}+2}\sqrt{p}\\&=o((\log p)^{4\delta(\log p)^{\delta}}\sqrt{p})\\&=o(p^{\frac{1}{2}+\eta}),
\end{split}
\end{equation}
for any $\eta>0$. On the other hands,  we have
\[
\Big(\frac{1}{4}\Big)^{z+1}\gg\Big(\frac{1}{4}\Big)^{(\log p)^{\delta}} =e^{-\log(4)(\log p)^{\delta}}=e^{-\log(4)\frac{\log p}{(\log p)^{\varepsilon}}}=p^{-\frac{\log(4)}{(\log p)^{\varepsilon}}}.
\]
Thus we obtain
\[
|S_{p}|\gg_{\epsilon} p^{1-\frac{\log(4)}{(\log p)^{\varepsilon}}},
\]
as we wanted.
\begin{proof}[Proof of Lemma $\ref{lem : tec}$]
The main references for this proof are \cite[Lemma $3.2$]{KLSW} and \cite{BMV}. We define
\[
A_{L}(\mathbf{x}):=\prod_{\substack{i=1\\ i\equiv 1 (2)}}^{z}\alpha_{L,+}(x_{i})\prod_{\substack{i=1\\ i\equiv 1 (2)}}^{1}\alpha_{L,-}(x_{-i}),
\]
where:
\begin{itemize}
\item[$i)$] $\alpha_{L,+}$ is a trigonometric polynomial in one variable of the form
\[
\sum_{|l|\leq L}\alpha_{L,+}(l)e(lx)
\]
defined as in \cite[$(2.2)$ Lemma $5$, $(2.17)$]{BMV} with $u=0$, $v=\frac{1}{2}-\frac{1}{\gamma}$. 
\item[$ii)$] $\alpha_{L,-}$ is a trigonometric polynomial in one variable of the form
\[
\sum_{|l|\leq L}\alpha_{L,-}(l)e(lx)
\]
defined as in \cite[$(2.2)$ Lemma $5$, $(2.17)$]{BMV} with $u=\frac{1}{2}+\frac{1}{\gamma}$, $v=1$.
\end{itemize}
Instead we define
\[
\begin{split}
B_{L}(\mathbf{x})&:=\sum_{\substack{i=1\\i\equiv 1(2)}}^{z}\beta_{L,+}(x_{i})\prod_{\substack{j=1\\j\neq i\\j\equiv 1(2)}}^{z}\alpha_{L,+}(x_{j})\prod_{\substack{j=1\\ j\equiv 1(2)}}^{z}\alpha_{L,-}(x_{-j})\\&+\sum_{\substack{i=1\\ i\equiv 1(2)}}^{z}\beta_{L,-}(x_{-i})\prod_{\substack{j=1\\ j\equiv 1(2)}}^{z}\alpha_{L,+}(x_{j})\prod_{\substack{j=1\\j\neq i\\j\equiv 1(2)}}^{z}\alpha_{L,-}(x_{-j}),
\end{split}
\]
where
\[
\begin{split}
&\beta_{L,+}(x)=\tfrac{1}{2(L+1)}\Big(\sum_{|l|\leq L}\Big(1-\tfrac{|l|}{L+1}\Big)e(lx) + \sum_{|l|\leq L}\Big(1-\tfrac{|l|}{L+1}\Big)e(l(x-\tfrac{1}{2}+\tfrac{1}{\gamma})\Big)\\&
\beta_{L,-}(x)=\tfrac{1}{2(L+1)}\Big(\sum_{|l|\leq L}\Big(1-\tfrac{|l|}{L+1}\Big)e(l(x-\tfrac{1}{2}-\tfrac{1}{\gamma})) + \sum_{|l|\leq L}\Big(1-\tfrac{|l|}{L+1}\Big)e(l(x-1)\Big).
\end{split}
\]
We can rewrite the above trigonometric polynomials as
\[
\begin{split}
&\beta_{L,+}(x)=\tfrac{1}{2(L+1)}\Big(2+\sum_{1\leq l\leq L}\Big(1-\tfrac{l}{L+1}\Big)(\cos(\pi l-\tfrac{ 2\pi l}{\gamma})+\sin(\pi l+\tfrac{ 2\pi l}{\gamma})+1)\cos (2\pi lx)\Big)\\&
\beta_{L,-}(x)=\tfrac{1}{2(L+1)}\Big(2+\sum_{1\leq l\leq L}\Big(1-\tfrac{l}{L+1}\Big)(\cos(-\pi l-\tfrac{2\pi l}{\gamma})+\sin(-\pi l-\tfrac{2\pi l}{\gamma})+1)\cos (2\pi lx)\Big).
\end{split}
\]
Remember that the $n$-th coefficient in the Chebychev expansion of $\alpha_{L,\pm}$ and $\beta_{L,\pm}$ are given by
\[
\int_{0}^{\pi}\alpha_{L,\pm}(\tfrac{\theta}{\pi})U_{n}(\theta)d\mu_{st},\qquad \int_{0}^{\pi}\beta_{L,\pm}(\tfrac{\theta}{\pi})U_{n}(\theta)d\mu_{st},
\]
then part $(iii)$ immediately follows because the above integrals vanishes if $n>2L$. Moreover in \cite[Lemma $5$]{BMV} it is shown that $0\leq\alpha_{L,\pm}(x)\leq 1$ for $x\in [0,1]$ and the same holds for the $|\beta_{L,\pm}|$s by definition. Using Cauchy-Schwarz inequality we get
\[
\Big|\int_{0}^{\pi}\alpha_{L,\pm}U_{n}(\theta)d\mu_{st}\Big|^{2}\leq\int_{0}^{\pi}|\alpha_{L,\pm}(\tfrac{\theta}{\pi})|^{2}d\mu_{st}\cdot\int_{0}^{\pi}|U_{n}(\theta)|^{2}d\mu_{st}\leq 1,
\]
the same argument can be used for $\beta_{L,\pm}$ and this proof part $(ii)$.  It remains to prove only part $(i)$, as we have just observed for any trigonometric polynomial $Y$  the constant term of its  Chebyshev expansion is given by
\[
\int_{0}^{\pi}Y(\theta )d\mu_{st},
\]
so we have that $\Delta$ in part $(i)$ is given by
\[
\begin{split}
\Delta &=\Big(\int_{0}^{\pi}\alpha_{L,+}(\tfrac{\theta}{\pi})d\mu_{st}\Big)^{\frac{z+1}{2}}\Big(\int_{0}^{\pi}\alpha_{L,-}(\tfrac{\theta}{\pi})d\mu_{st}\Big)^{\frac{z+1}{2}}\\&-\frac{z+1}{2}\int_{0}^{\pi}\beta_{L,+}(\tfrac{\theta}{\pi})d\mu_{st}\Big(\int_{0}^{\pi}\alpha_{L,+}(\tfrac{\theta}{\pi})d\mu_{st}\Big)^{\frac{z-1}{2}}\Big(\int_{0}^{\pi}\alpha_{L,-}(\tfrac{\theta}{\pi})d\mu_{st}\Big)^{\frac{z+1}{2}}\\&-\frac{z+1}{2}\int_{0}^{\pi}\beta_{L,-}(\tfrac{\theta}{\pi})d\mu_{st}\Big(\int_{0}^{\pi}\alpha_{L,+}(\tfrac{\theta}{\pi})d\mu_{st}\Big)^{\frac{z+1}{2}}\Big(\int_{0}^{\pi}\alpha_{L,-}(\tfrac{\theta}{\pi})d\mu_{st}\Big)^{\frac{z-1}{2}}.
\end{split}
\]
Using the definition of $\beta_{L,\pm}$ we get
\[
\int_{0}^{\pi}\beta_{L,\pm}(\tfrac{\theta}{\pi})d\mu_{st}=\frac{1}{L+1},
\]
so we can write $\Delta$ as
\[
\begin{split}
\Delta&=\Big(\int_{0}^{\pi}\alpha_{L,+}(\tfrac{\theta}{\pi})d\mu_{st}\Big)^{\frac{z+1}{2}}\Big(\int_{0}^{\pi}\alpha_{L,-}(\tfrac{\theta}{\pi})d\mu_{st}\Big)^{\frac{z+1}{2}}\\&-\frac{z}{2L+2}\Big(\int_{0}^{\pi}\alpha_{L,+}(\tfrac{\theta}{\pi})d\mu_{st}\Big)^{\frac{z-1}{2}}\Big(\int_{0}^{\pi}\alpha_{L,-}(\tfrac{\theta}{\pi})d\mu_{st}\Big)^{\frac{z+1}{2}}\\&-\frac{z}{2L+2}\Big(\int_{0}^{\pi}\alpha_{L,+}(\tfrac{\theta}{\pi})d\mu_{st}\Big)^{\frac{z+1}{2}}\Big(\int_{0}^{\pi}\alpha_{L,-}(\tfrac{\theta}{\pi})d\mu_{st}\Big)^{\frac{z-1}{2}}.
\end{split}
\]
When $L\rightarrow\infty$, $\alpha_{L,\pm}\rightarrow\chi_{\pm\frac{1}{\gamma}}$  in $L^{2}([0,1])$, moreover from \cite[$(2.6)$]{BMV} one has
\[
|\chi_{\pm\frac{1}{\gamma}}-\alpha_{L,\pm}|\leq|\beta_{L,\pm}|\qquad 0\leq x\leq 1
\]
and from the Fourier expansion of $\beta_{L,\pm}(x)$ we have
\[
||\beta_{L,\pm}||^{2}_{L^{2}}\leq\frac{8+3L}{(2L+2)^{2}}\longrightarrow 0,
\] 
thus we have
\[
\int_{0}^{\pi}\alpha_{L,\pm}(\tfrac{\theta}{\pi})d\mu_{st}\longrightarrow\int_{0}^{\pi}\chi_{\pm\frac{1}{\gamma}}(\tfrac{\theta}{\pi})d\mu_{st}=\frac{1}{2}-\frac{1}{\gamma}+\frac{\sin(\tfrac{\pi}{\gamma})\cos (\tfrac{\pi}{\gamma})}{\pi}.
\]
This implies that there exist $L_{0}$, such that the integral in the left hand side of the equation above is $\geq \frac{1}{2}-\frac{1}{\gamma}$ so we get:
\begin{equation}
\Delta\geq\Big(\frac{1}{2}-\frac{1}{\gamma}\Big)^{z-1}\Big(\Big(\frac{1}{2}-\frac{1}{\gamma}\Big)^{2}-\frac{3z}{2L+2}\Big).
\end{equation}
If we assume $2L+2\geq 6z\Big(\frac{1}{2}-\frac{1}{\gamma}\Big)^{-2}$ we get:
\begin{equation}
\Delta\geq\frac{1}{2}\Big(\frac{1}{2}-\frac{1}{\gamma}\Big)^{z+1},
\end{equation}
as we wanted.

\end{proof}

\section{Moments}
\subsubsection*{The Auxiliary Lemma}
Let us start with the following Lemma
\begin{lem}
With the same notation as in Theorem $\ref{thm : tracemom}$, let $0\leq\alpha <\beta\leq 1$, then for any $k\geq 2$ there exist two constant $\gamma,\delta\geq 1$ depending only on $c_{\mathfrak{F}}$, such that
\[
\frac{1}{p-1}\sum_{a\in\mathbb{F}_{p}^{\times}}\Big|\frac{1}{\sqrt{p}}\sum_{\alpha p<x\leq \beta p}t_{a,p}(x)\Big|^{2k}\leq\gamma^{2k}(\log k)^{2k}\Big(\frac{\pi}{\beta -\alpha}\Big)^{-\frac{2k}{\log k}}+\delta^{2k}p^{-\frac{1}{2}}(\log p)^{2k}.
\]
\label{lem : tec1}
\end{lem}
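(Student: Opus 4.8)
\emph{Proof strategy.} The plan is to linearise the partial sum by the quantitative Fourier expansion proved above, reduce the $2k$-th moment to a weighted sum over $2k$-tuples of frequencies of the autocorrelation sums of the bountiful sheaf $\FT(\mathcal F_{1,p})$, evaluate those autocorrelations by the Goursat--Kolchin--Ribet/Deligne machinery exactly as in Lemmas~\ref{lem : sym} and~\ref{lem : prod2}, and estimate the surviving main term as a high moment of a model random Dirichlet series. First I would apply the quantitative Fourier expansion above to $\sum_{x\le\beta p}t_{a,p}(x)$ and to $\sum_{x\le\alpha p}t_{a,p}(x)$ with $N=p$ and subtract, using $\|t_{a,p}\|_\infty\ll c_{\mathfrak F}$ and $\|K_{a,p}\|_\infty\ll c_{\mathfrak F}$ (both sheaves are middle extensions of pure weight-$0$ sheaves of conductor $\le c_{\mathfrak F}$), to get
\[
\frac{1}{\sqrt p}\sum_{\alpha p<x\le\beta p}t_{a,p}(x)=L_{a,p}+O\big((\beta-\alpha)c_{\mathfrak F}\big)+O\big(c_{\mathfrak F}\,p^{-1/2}\log p\big),\qquad L_{a,p}:=\sum_{1\le|n|\le p}\frac{K_{a,p}(n)}{n}\,w_n,
\]
where $w_n=-\tfrac1{2\pi i}\big(e(-\alpha n)-e(-\beta n)\big)$, so that $|w_n|=\pi^{-1}\big|\sin(\pi(\beta-\alpha)n)\big|\le\min\!\big(\pi^{-1},(\beta-\alpha)|n|\big)$. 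Raising to the $2k$-th power, averaging over $a$ and using $(x+y+z)^{2k}\le 3^{2k}(x^{2k}+y^{2k}+z^{2k})$, the two error terms are absorbed into the right-hand side of the Lemma for $\gamma,\delta$ large enough in terms of $c_{\mathfrak F}$ (one may assume $\beta-\alpha\ge 1/p$, else the left-hand side vanishes), and it remains to bound $\frac1{p-1}\sum_a|L_{a,p}|^{2k}$.

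\emph{Moment expansion and sheaf input.} Expanding $|L_{a,p}|^{2k}$ and summing over $a$ gives
\[
\frac1{p-1}\sum_{a\in\mathbb F_p^\times}|L_{a,p}|^{2k}=\sum_{\mathbf n,\mathbf m}\frac{\prod_j w_{n_j}\overline{w_{m_j}}}{\prod_j n_j\overline{m_j}}\cdot\frac1{p-1}\sum_{a\in\mathbb F_p^\times}\prod_{j=1}^{k}K_{a,p}(n_j)\prod_{j=1}^{k}\overline{K_{a,p}(m_j)},
\]
with $\mathbf n,\mathbf m$ ranging over $\{n:1\le|n|\le p\}^k$. Since the family is $r$-acceptable, $K_{a,p}(n)=K_{1,p}(\tau_n\cdot a)$, so the inner average is that of the trace function of $\mathcal H:=\bigotimes_j\tau_{n_j}^{*}\mathcal G\otimes\bigotimes_j\overline{\tau_{m_j}^{*}\mathcal G}$, $\mathcal G:=\FT(\mathcal F_{1,p})$ being $r$-bountiful. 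Arguing exactly as in Lemmas~\ref{lem : sym}--\ref{lem : prod2}, but with arbitrary tensor products of distinct $\PGL_2$-translates of $\mathcal G$ and its dual in place of $\Sym^{n_i}(\mathcal K)$: triviality of $\Aut_0(\mathcal G)$ and distinctness of the $\tau_n$ give, via Goursat--Kolchin--Ribet, that the geometric monodromy of the direct sum over the distinct indices is the full product of copies of $\SL_r$ (or $\Sp_r$); hence $G^{\mathrm{arith}}=G^{\mathrm{geom}}$, and Deligne's equidistribution theorem with the generalised Weil bound yield
\[
\frac1{p-1}\sum_{a\in\mathbb F_p^\times}\prod_jK_{1,p}(\tau_{n_j}\cdot a)\prod_j\overline{K_{1,p}(\tau_{m_j}\cdot a)}=\mathfrak m(\mathbf n,\mathbf m)+O\!\big(k^{2}c_{\mathfrak F}^{4k}\,p^{-1/2}\big),
\]
where $\mathfrak m(\mathbf n,\mathbf m)$ is the multiplicity of the trivial representation in $\mathcal H$; equivalently $\mathfrak m(\mathbf n,\mathbf m)=\mathbb E\big[\prod_jX_{n_j}\prod_j\overline{X_{m_j}}\big]$, where $(X_n)_n$ are independent, each distributed as the trace of the standard representation of a Haar-random element of $\SL_r$ (resp. $\Sp_r$), so $\mathbb E X_n=0$, $\mathbb E|X_n|^2=1$, $|X_n|\le r\le c_{\mathfrak F}$. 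With $c(\mathcal H)\ll k\,c_{\mathfrak F}\,r^{2k}$ and $\sum_{\mathbf n,\mathbf m}\prod_j|w_{n_j}w_{m_j}|/|n_jm_j|\ll(\log p)^{2k}$, the error term contributes $\ll\delta^{2k}p^{-1/2}(\log p)^{2k}$ once $k^{O(1)}$ and $r^{O(k)}$ are absorbed into $\delta^{2k}$.

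\emph{The main term.} By the identity above the main term is $\frac1{(2\pi)^{2k}}\,\mathbb E|W|^{2k}$ with $W:=\sum_{1\le|n|\le p}w_nX_n/n$, and one has $\sum_{1\le|n|\le p}|w_n/n|^2\le(\beta-\alpha)$ and $\max_{1\le|n|\le p}|w_n/n|\le(\beta-\alpha)$. Writing $\theta:=\beta-\alpha$, I would split according to the size of $\theta$ relative to $\asymp 1/(kc_{\mathfrak F})$: when $\theta$ is at most a constant multiple of $1/(kc_{\mathfrak F})$ I would apply a Rosenthal/Bernstein-type inequality directly to $W$, getting $\mathbb E|W|^{2k}\ll(Ck\theta)^{k}+(Ckc_{\mathfrak F}\theta)^{2k}$; otherwise I would write $W=W_{\le y}+W_{>y}$ with $y\asymp kc_{\mathfrak F}$, bound $|W_{\le y}|\le c_{\mathfrak F}\sum_{1\le|n|\le y}|w_n/n|\ll c_{\mathfrak F}\log k$ deterministically (using $|w_n|\le\min(\pi^{-1},\theta|n|)$, which makes the truncated $\ell^{1}$-mass $\ll 1+\log^{+}(\theta y)$), and bound $\mathbb E|W_{>y}|^{2k}\ll 1$ by Bernstein's inequality since $\sum_{|n|>y}|w_n/n|^2\ll 1/y$ while $y\gg k$. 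An elementary computation then checks that in both ranges the bound obtained is $\le\gamma^{2k}(\log k)^{2k}(\pi/(\beta-\alpha))^{-2k/\log k}$ for $\gamma$ large enough in terms of $c_{\mathfrak F}$, which would complete the proof.

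\emph{Main obstacle.} The delicate point is this last step: the truncation $y$ and the case analysis in $\theta$ must be arranged so as to land precisely on the stated shape $(\log k)^{2k}(\pi/(\beta-\alpha))^{-2k/\log k}$, rather than on the lossier $(\log k)^{2k}$ or $(Ck)^{2k}$ produced by a crude use of Khintchine's or Rosenthal's inequality. A secondary, more routine difficulty is making the monodromy and conductor estimates of the moment step uniform in $k$, since the proof of Lemma~\ref{lem : prod2} only handled symmetric powers of a single rank-$2$ sheaf, whereas here one needs the analogue for arbitrary tensor products of $\PGL_2$-translates of the $r$-bountiful sheaf $\FT(\mathcal F_{1,p})$ and its dual.
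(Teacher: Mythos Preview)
Your proposal tracks the paper closely through the Fourier expansion, the $2k$-th moment expansion, and the sheaf autocorrelation input; the paper simply cites \cite[Corollary~1.7]{FKM3} for that last step rather than rederiving it from Goursat--Kolchin--Ribet, so your ``secondary difficulty'' is already packaged there with the required uniformity in $k$. The one substantive divergence is how you handle the main term. The paper does not pass to the random model $W$ at all: instead it uses that $m(\mathbf n,\mathbf l)\neq 0$ forces every frequency to occur with even multiplicity, groups the surviving $2k$-tuples by their product $n_{1}\cdots n_{2k}=m^{2}$, bounds the number of such tuples by $\binom{2k}{k}d_{k}(m)^{2}$, and then quotes \cite[Lemma~4.1]{BG} for the ready-made estimate
\[
\sum_{m\le p}d_{k}(m)^{2}\,\min\!\Big(\frac{1}{m^{2}},\Big(\frac{\beta-\alpha}{\pi}\Big)^{2k}\Big)\le 2^{k}(\log k)^{2k}\Big(\frac{\pi}{\beta-\alpha}\Big)^{-2k/\log k},
\]
which lands on the stated shape in one stroke and sidesteps the case analysis you flag as delicate. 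Your probabilistic route via $W=W_{\le y}+W_{>y}$ and Rosenthal/Bernstein is a legitimate alternative; the ``elementary computation'' you postpone reduces, after writing $\beta-\alpha\asymp k^{-v}$ with $v\in[0,1]$, to the inequality $(1-v)e^{v}\le 1$, so it does go through. The paper's path is shorter because the analytic work is outsourced to Bober--Goldmakher, while yours is more self-contained but must still earn the exponent $-2k/\log k$ by hand.
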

\begin{proof}
Let's start with the quantitative form of the Fourier expansion:
\[
\begin{split}
\frac{1}{\sqrt{p}}\sum_{\alpha p<x\leq \beta p}t_{a,p}&=\frac{1}{2\pi i}\sum_{1\leq |n|\leq p/2}\frac{K_{a,p}(n)}{n}(1-e((\beta-\alpha)n))e(\alpha n)\\&+(\beta-\alpha)K_{a,p}(0)+O(1)
\end{split}
\]
To simplify the notation, for any $-p/2 \leq n\leq p/2$ we define
\[
c_{n}:=\frac{(1-e((\beta-\alpha)n))e(\alpha n)}{n},
\]
so we can write the equation above as
\[
\frac{1}{\sqrt{p}}\sum_{\alpha p<x\leq \beta p}t_{a,p}(x)=\frac{1}{2\pi i}\sum_{1<|n|<p/2}K_{a,p}(n)c_{n}+(\beta-\alpha)K_{a,p}(0)+O(1).
\]
By the triangular inequality one gets
\[
\begin{split}
\frac{1}{p-1}\sum_{a\in\mathbb{F}_{a}^{\times}}\Big|\frac{1}{\sqrt{p}}\sum_{\alpha p<x\leq \beta p}t_{a,p}(x)\Big|^{2k}&\leq\frac{1}{(p-1)\pi^{2k}}\sum_{a\in\mathbb{F}_{p}^{\times}}\Big|\sum_{1< |n|< p/2}K_{a,p}(n)c_{n}\Big|^{2k}\\&+O(2^{4k}+2^{4k}(\beta-\alpha)^{2k}c_{\mathfrak{F}}^{2k})\\&=\frac{1}{(p-1)\pi^{2k}}\sum_{a\in\mathbb{F}_{p}^{\times}}\Big|\sum_{1< |n|< p/2}K_{1,p}(\tau_{n}\cdot a)c_{n}\Big|^{2k}\\&+O(2^{4k}+2^{4k}(\beta-\alpha)^{2k}c_{\mathfrak{F}}^{2k}),
\end{split}
\]
where in the first inequality we use the fact that $K_{a,p}(0)\leq c (\mathcal{F}_{a,p})\leq c_{\mathfrak{F}}$ by hypothesis.
To conclude the proof of the Lemma it is enough to provide a bound for the first term in the right hand side. Extending the $2k$-power we get
\[
\begin{split}
\sum_{a\in\mathbb{F}_{p}^{\times}}\sum_{n_{1}}\cdots\sum_{n_{k}}\sum_{l_{1}}\cdots\sum_{l_{k}}&K_{1,p}(\tau_{n_{1}}\cdot a)\cdots K_{1,p}(\tau_{n_{k}}\cdot a)\cdot\\&\cdot K_{1,p}(\tau_{l_{1}}\cdot a)\cdot...\cdot K_{1,p}(\tau_{l_{k}}\cdot a)c_{n_{1}}\cdots c_{n_{k}}\overline{c_{l_{1}}}\cdots\cdot\overline{c_{l_{k}}}.
\end{split}
\]
An application of \cite[Corollary $1.7$]{FKM3} implies 
\[
\Big|\sum_{a\in\mathbb{F}_{p}^{\times}}K_{1,p}(\tau_{n_{1}}\cdot a)\cdot...\cdot K_{1,p}(\tau_{n_{k}}\cdot a)K_{1,p}(\tau_{l_{1}}\cdot a)\cdot...\cdot K_{1,p}(\tau_{l_{k}}\cdot a)-m(\textbf{n},\textbf{l})p\Big|\leq\delta_{1}^{2k}\sqrt{p},
\]
where the constant $\delta_{1}$ depends only on $c_{\mathfrak{F}}$ and moreover $m(\textbf{n},\textbf{l})\neq 0$ if and only if every entries of the array $(\textbf{n},\textbf{l})$ have even multiplicity (see \cite[Corollary $1.7$]{FKM3} for a precise definition of $m(\textbf{n},\textbf{l})$). Thanks to this we get
\[
\sum_{a\in\mathbb{F}_{p}^{\times}}\Big|\sum_{1<|n|<p/2}K_{a,p}(n)c_{n}\Big|^{2k}\leq A+B,
\]
where
\[
A:=p\Big|\sum_{\textbf{n},\textbf{l}}c_{n_{1}}\cdot...\cdot c_{n_{k}}\overline{c_{l_{1}}}\cdot...\cdot\overline{c_{l_{k}}}m(\textbf{n},\textbf{l})\Big|,
\]
and
\[
B:=\Big|\sqrt{p}\delta_{1}^{2k}\sum_{\textbf{n},\textbf{l}}c_{n_{1}}\cdot...\cdot c_{n_{k}}\overline{c_{l_{1}}}\cdot...\cdot\overline{c_{l_{k}}}\Big|.
\]
Let us bound $B$ first.
\[
B\leq \sqrt{p}\delta_{1}^{2k}\sum_{\textbf{n},\textbf{l}}|c_{n_{1}}|\cdot...\cdot |c_{n_{k}}||c_{l_{1}}|\cdot...\cdot |c_{l_{k}}|=\sqrt{p}\delta_{1}^{2k}\Big(\sum_{n}|c_{n}|\Big)^{2k}.
\]
On the other hand
\[
|c_{n}|\leq2\min \Big(\frac{1}{n},\frac{\beta-\alpha}{\pi}\Big)\leq\frac{2}{n},
\]
hence we get $B\leq\sqrt{p}\delta_{2}^{2k}(\log p)^{2k}$ for some $\delta_{2}>0$ depending only on $c_{\mathfrak{F}}$.\newline
To bound $A$ we can proceed as follows: first observe that if $m(\textbf{n},\textbf{l})\neq 0$ then there exists a constant dependent only on $c_{\mathfrak{F}}$, let's say $\gamma_{1}$, such that $m(\textbf{n},\textbf{l})\leq\gamma_{1}^{2k}$ (again the reference here is \cite[Corollary $1.7$]{FKM3}). Thus
\begin{equation}
A\leq \gamma_{1}^{2k}p\sum_{n}\sum_{(n_{1},...,n_{2k})\in m(n)}c_{n_{1}}\cdots c_{n_{2k}},
\label{eq : boundA}
\end{equation}
where $m(n):=\{(n_{1},...,n_{2k}):n_{1}\cdots n_{2k}=n\quad\text{any $n_{i}$ appears an even number of times}\}$. On the other hand we have that
\[
c_{n_{1}}\cdots c_{n_{2k}}\leq 2^{2k}\min \Big(\frac{1}{n},\Big(\frac{\beta-\alpha}{\pi}\Big)^{2k}\Big)=:b(n).
\]
Let us focus our attention on the size of $|m(n)|$. First observe that by definition, $|m(n)|=0$ is $n$ is not a square. Moreover for any $(n_{1},...,n_{2k})\in m(n^{2})$ we can find two set $S_{1},S_{2}\subset\{1,...,2k\}$ such that
\[
|S_{1}|=|S_{2}|=k\qquad S_{1}\cap S_{2}=\emptyset\qquad n=\prod_{i\in S_{1}}n_{i}=\prod_{i\in S_{2}}n_{i},
\] 
thus
\[
|m(n^{2})|\leq\binom{2k}{k}d_{k}(n)^{2}.
\]
Inserting this in equation $(\ref{eq : boundA})$ we get
\[
\begin{split}
A&\leq\gamma_{1}^{2k}\binom{2k}{k}p\sum_{n}d_{k}(n)^{2}b(n^{2})\\&\leq \gamma_{1}^{2k}\binom{2k}{k}p\sum_{n\leq p}d_{k}(n)^{2}b(n^{2})+O_{k,\varepsilon}(p^{\varepsilon}).
\end{split}
\]
On the other hand is shown in \cite[Lemma $4.1$]{BG} that
\[
\sum_{n\leq p}d_{k}(n)^{2}b(n^{2})\leq 2^{k}(\log k)^{2k}\Big(\frac{\pi}{\beta -\alpha}\Big)^{-\frac{2k}{\log k}}
\]
and this conclude the proof.
\end{proof}

\subsubsection*{Proof of Theorem \ref{thm : tracemom}}
We are finally ready to prove our result on moments
\begin{proof}[Proof of Theorem \ref{thm : tracemom}] Let us start with the lower bound. By Lemma $\ref{lem : ker}$ we have that
\[
M(t_{a,p})\geq \frac{1}{4\pi}\Big|\sum_{1\leq n\leq k}\frac{K_{1,p}(\tau_{n}\cdot a)}{n}(1-e(\alpha n))\Big|+O_{c(\mathfrak{F})}(1)
\]
for any $p$ large enough and any $a\in\mathbb{F}_{p}^{\times}$. On the other hand $\mathcal{K}_{1,p}$ is a bountiful sheaf, so the sheaves $\mathcal{K}_{1,p},\tau_{-1}^{*}\mathcal{K}_{1,p},...,\tau_{-k}^{*},\mathcal{K}_{1,p},\tau_{k}^{*}\mathcal{K}_{1,p}$ satisfy the Goursat-Kolchin-Ribet criterion and so
\[
a\mapsto (K_{1,p}(\tau_{1}\cdot a),K_{1,p}(\tau_{-1}\cdot a),...,K_{1,p}(\tau_{-k}\cdot a),K_{1,p}(\tau_{k}\cdot a))
\]
become equidistributed in $(\prod_{i=1}^{2k}G^{\geom},\mu_{G^{\geom}}^{\otimes 2k})$ when $p\rightarrow\infty$. Now if we define
\[
\begin{split}
S_{k,p}:=\{a\in\mathbb{F}_{p}^{\times}:& K_{1,p}(\tau_{i}\cdot a)>\sqrt{2}\text{ }\forall 0<i<k, i\equiv 1(2),\\&  K_{1,p}(\tau_{-i}\cdot a)<-\sqrt{2}\text{ }\forall 0<i<k, i\equiv 1(2))\},
\end{split}
\]
we have
\[
M(t_{a,p})\geq\Big(\frac{1}{\sqrt{2}\pi}+o(1)\Big)\log k
\]
for any $a\in S_{k,p}$. Hence
\[
\frac{1}{p-1}\sum_{a\in\mathbb{F}_{p}^{\times}}M(t_{a,p})^{2k}\geq \frac{1}{p-1}\sum_{a\in S_{k,p}}M(t_{a,p})\geq \Big(\frac{c}{\sqrt{2}\pi}+o(1)\Big)^{2k}(\log k)^{2k},
\]
where $c=|\{a\in\mathbb{F}_{p}^{\times}: K_{1,p}( a)>\sqrt{2}\}|$ which depends only on $c_{\mathfrak{F}}$. Let us prove now the upper bound.
For any $a\in\mathbb{F}_{q}^{\times}$ let $N_{a,p}$ be the smallest integer such that
\[
M(t_{a,p})=\Big|\frac{1}{\sqrt{p}}\sum_{x\leq N_{a,p}}t_{a,p}(x)\Big|.
\]
At this point we would like to apply Lemma $\ref{lem : tec1}$ but the issue here is that the $N_{a,p}$s could be very different each other. To turn around to this problem, following the same strategy as in \cite{MV2} and \cite{BG}, we will use the Rademacher-Menchov trick: first of all write $\frac{N_{a,p}}{p}$ in base two
\[
\frac{N_{a,p}}{p}=\sum_{j=1}^{\infty}a_{j}2^{-j}\qquad a_{j}\in\{0,1\},
\]
and let $N_{a,p}(L_{p})/p$ be the truncation of this series to the factor of power $L_{p}$. Then we have
\[
M(t_{a,p})\leq\Big|\frac{1}{\sqrt{p}}\sum_{x\leq N_{a,p}(L_{p})}t_{a,p}(x)\Big|+\Big|\frac{1}{\sqrt{p}}\sum_{N_{a,p}(L_{p})<x\leq N_{a,p}}t_{a,p}(x)\Big|,
\]
notice that the length of the second summation is $\leq\frac{p}{2^{L_{p}}}$, let's denote $E(a,p,L_{p})$ this sum. An application of the H\"older inequality leads to
\[
\begin{split}
M(t_{a,p})^{2k}\\&\leq 2^{2k}\Big(\sum_{l\leq L_{p}}\frac{1}{l^{\frac{2k\alpha}{2k-1}}}\Big)^{2k-1}\Big(\sum_{l\leq L_{p}}l^{2k\alpha}\Big|\frac{1}{\sqrt{p}}\sum_{N_{a,p}(l)<x\leq N_{a,p}(l+1)}t_{a,p}(x)\Big|^{2k}\Big)\\&+2^{2k}E(a,p,L_{p})^{2k}.
\end{split}
\]
at this point we observe at first that $N_{a,p}(l+1)\leq N_{a,p}(l)+p2^{-(l+1)}$ , moreover for the value of $N_{a,p}(l)$ one has $2^{l-1}$ possibility so
\[
\begin{split}
M(t_{a,p})^{2k}&\leq 2^{2k}\Big(\sum_{l\leq L_{p}}\frac{1}{l^{\frac{2k\alpha}{2k-1}}}\Big)^{2k-1}\Big(\sum_{l\leq L_{p}}l^{2k\alpha}\sum_{0\leq m\leq 2^{l}-1}\Big|\frac{1}{\sqrt{p}}\sum_{p\frac{m}{2^{l}}<x\leq p(\frac{m}{2^{l}}+2^{-(l+1)})}t_{a,p}(x)\Big|^{2k}\Big)\\&+2^{2k}E(a,p,L_{p})^{2k}.
\end{split}
\]
We can now apply the Lemma $\ref{lem : tec1}$ and choose $\alpha=3/2$ getting
\[
\begin{split}
\frac{1}{p-1}\sum_{a\in\mathbb{F}_{p}^{\times}}M(t_{a,p})^{2k}&\leq  2^{2k}\Big(\sum_{l\leq L_{p}}\frac{1}{l^{\frac{3k}{2k-1}}}\Big)^{2k-1}\Big(\sum_{l\leq L_{p}}l^{3k}2^{l}(\gamma^{2k}(\log k)^{2k}2^{-\frac{kl}{\log k}}+\\&+\delta^{2k}p^{-\frac{1}{2}}(\log p)^{2k})\Big) +\frac{2^{2k}}{p-1}\sum_{a\in\mathbb{F}_{p}^{\times}}E(a,p,L_{p})^{2k}.
\end{split}
\]
Let $L_{p}:=\log_{2}\Big(\frac{p^{\frac{1}{2}}}{(\log p)^{8k}}\Big)$, then
\[
\begin{split}
(2\delta)^{2k}\Big(\sum_{l\leq L_{p}}\frac{1}{l^{\frac{3k}{2k-1}}}\Big)^{2k-1}\sum_{l\leq L_{p}}l^{3k}2^{l}p^{-\frac{1}{2}}(\log p)^{2k}&\ll_{k}(\log p)^{8k}2^{L}p^{-\frac{1}{2}}\\&\ll_{k}1,
\end{split}
\]
where in the first step we are using that
\[
\sum_{l}\frac{1}{l^{\frac{3k}{2k-1}}}\ll 1.
\]
Moreover using the inequality
\[
\sum_{l\leq L_{p}}l^{3k}2^{l}2^{-\frac{kl}{\log k}}\leq\text{exp}(3k\log\log k+O(k))
\]
proven in \cite[Theorem $1.1$]{BG} we get
\[
(2\gamma)^{2k}(\log k)^{2k}\Big(\sum_{l\leq L_{p}}\frac{1}{l^{\frac{2k}{2k-1}}}\Big)^{2k-1}\sum_{l\leq L_{p}}l^{2k}2^{l}2^{-\frac{kl}{\log k}}\ll_{k}1.
\]
On the other hand we have already pointed out that the length of $E(a,p,L_{p})$ is at most $\frac{p}{2^{L_{p}}}=p^{\frac{1}{2}}(\log p)^{8k}$, so thanks to \cite[Theorem $1.1$]{FKMRRS} we have
\[
|E(a,p,L_{p})|\leq 16\log (4e^{8}(\log p)^{k})
\]
then
\[
\frac{1}{p-1}\sum_{a\in\mathbb{F}_{p}^{\times}}E(a,p,L_{p})^{2k}\leq (Ck)^{k}(\log\log p)^{2k}.
\]
Now let us assume that
\begin{equation}
\Big|\sum_{N\leq x\leq N+H }t_{a,p}(x)\Big|\ll_{c_{\mathfrak{F}}} H^{1-\varepsilon}
\label{eq : cond2}
\end{equation}
holds uniformly for any $1<N<p$, $p^{1/2-\varepsilon/2}<H<p^{1/2+\varepsilon/2}$ and $a\in\mathbb{F}_{p}^{\times}$. Starting again from
\[
\begin{split}
\frac{1}{p-1}\sum_{a\in\mathbb{F}_{p}^{\times}}M(t_{a,p})^{2k}\leq & 2^{2k}\Big(\sum_{l\leq L_{p}}\frac{1}{l^{\frac{2\alpha k}{2k-1}}}\Big)^{2k-1}\Big(\sum_{l\leq L_{p}}l^{2k\alpha}2^{l}(\gamma^{2k}(\log k)^{2k}2^{-\frac{kl}{\log k}}+\\&+\delta^{2k}p^{-\frac{1}{2}}(\log p)^{2k})\Big) +\frac{2^{2k}}{p-1}\sum_{a\in\mathbb{F}_{p}^{\times}}E(a,p,L_{p})^{2k}.
\label{eq : momproof}
\end{split}
\]
We can now choose $L_{p}:=\frac{1-\varepsilon}{2}\log_{2} p$, then $(\ref{eq : cond2})$ implies
\[
E(a,p,L_{p})=\Big|\frac{1}{\sqrt{p}}\sum_{N_{a,p}(L_{p})<x\leq N_{a,p}}t_{a,p}(x)\Big|\ll_{c_{\mathfrak{F}}} p^{-\varepsilon'}.
\]
Then
\[
\frac{1}{p-1}\sum_{a\in\mathbb{F}_{p}^{\times}}E(a,p,L_{p})\ll_{c_{\mathfrak{F}}} p^{-\varepsilon'},
\]
for some $\varepsilon'>0$. Moreover observe that
\[
\begin{split}
p^{-\frac{1}{2}}(\delta\log p)^{2k}\Big(\sum_{l\leq L_{p}}l^{2k\alpha}2^{l}\Big)&\leq p^{-\frac{1}{2}}(\delta\log p)^{2k}\Big(\sum_{l\leq L_{p}} L_{p}^{4k}2^{l}\Big)\\&\ll_{k,c_{\mathfrak{F}}}p^{-\frac{1}{2}}(\log p)^{2k}2^{L_{p}}\\&\ll p^{-\varepsilon/2}(\log p)^{2k}.
\end{split}
\]
So we get
\[
\begin{split}
\frac{1}{p-1}\sum_{a\in\mathbb{F}_{p}^{\times}}M(t_{a,p})^{2k}\leq & (2\gamma\log k)^{2k}\Big(\sum_{l\leq L_{p}}\frac{1}{l^{\frac{2\alpha k}{2k-1}}}\Big)^{2k-1}\Big(\sum_{l\leq L_{p}}l^{2k\alpha}2^{l}2^{-\frac{kl}{\log k}}\Big)\\& +O_{k,c_{\mathfrak{F}}}(p^{-\varepsilon''}),
\end{split}
\]
$\varepsilon''>0$. On the other hand we have (\cite[Theorem $1.1$]{BG})
\[
\sum_{l\leq L_{p}}\frac{1}{l^{\frac{2\alpha k}{2k-1}}}\leq (\alpha -1)^{1-2k},\qquad \sum_{l\leq L_{p}}l^{2k\alpha}2^{l}2^{-\frac{kl}{\log k}}\leq\exp (2k\alpha\log\log k +O(k)),
\]
so choosing $\alpha=1+1/\log\log k$ we get the result.
\end{proof}
we conclude with
\begin{proof}[Proof of Corollary $\ref{cor : quan}$.] 
For $(i)$ observe that from the proof of lower bond of Theorem $\ref{thm : tracemom}$ it follows that any element, $a$, in the set
\[
\begin{split}
S_{h,p}:=\{a\in\mathbb{F}_{p}^{\times}:& K_{1,p}(\tau_{i}\cdot a)>\sqrt{2}\text{ }\forall 0<i<h, i\equiv 1(2),\\&  K_{1,p}(\tau_{-i}\cdot a)<-\sqrt{2}\text{ }\forall 0<i<h, i\equiv 1(2)\}
\end{split}
\]
is such that $M(t_{a,p})>\text{const}\cdot\log h$. Moreover we have that $|S_{h,p}|>c^{2h}$ for some constant $0<c<1$ depending on $c_{\mathfrak{F}}$. Choosing $h=\exp((\text{const})^{-1}\cdot A)$ we get 
\[
D_{\mathfrak{F}}(A)\geq |S_{\exp((\text{const})^{-1}\cdot A)}|.
\]
To conclude, the proof of $(ii)$ is exactly the same as in \cite[Theorem $1.3$]{BG}.
\end{proof}

\bibliographystyle{alpha}

\bibliography{bibtesi.bib}

\end{document}